\theoremstyle{plain}
\newtheorem{thm}{\protect\theoremname}[section]
\theoremstyle{plain}
\newtheorem{cor}[thm]{\protect\corollaryname}
\theoremstyle{plain}
\newtheorem{lem}[thm]{\protect\lemmaname}
\theoremstyle{plain}
\newtheorem{prop}[thm]{\protect\propositionname}
\theoremstyle{plain}
\newtheorem{conjecture}[thm]{\protect\conjecturename}
\theoremstyle{definition}
\newtheorem{defn}[thm]{\protect\definitionname}
\theoremstyle{definition}
\newtheorem{example}[thm]{\protect\examplename}
\theoremstyle{remark}
\newtheorem{rem}[thm]{\protect\remarkname}
\theoremstyle{remark}
\newtheorem{claim}[thm]{\protect\claimname}
\theoremstyle{plain}
\newtheorem{fact}[thm]{\protect\factname}
\theoremstyle{definition}
\newtheorem{sol}[thm]{\protect\solutionname}
\date{}
\crefname{appsec}{Appendix}{Appendices}
\crefname{enumi}{condition}{conditions}
\Crefname{enumi}{Condition}{Conditions}
\theoremstyle{plain}
\newtheorem{mythm}{\protect\theoremname}[section]
\renewenvironment{thm}{\begin{mythm}}{\end{mythm}}
\crefname{mythm}{Theorem}{Theorems}
\theoremstyle{definition}
\newtheorem{mydefn}[mythm]{\protect\definitionname}
\renewenvironment{defn}{\begin{mydefn}}{\end{mydefn}}
\theoremstyle{definition}
\newtheorem{myfact}[mythm]{\protect\factname}
\renewenvironment{fact}{\begin{myfact}}{\end{myfact}}
\theoremstyle{definition}
\newtheorem{myexample}[mythm]{\protect\examplename}
\renewenvironment{example}{\begin{myexample}}{\end{myexample}}
\theoremstyle{plain}
\newtheorem{myprop}[mythm]{\protect\propositionname}
\renewenvironment{prop}{\begin{myprop}}{\end{myprop}}
\theoremstyle{plain}
\newtheorem{mycor}[mythm]{\protect\corollaryname}
\renewenvironment{cor}{\begin{mycor}}{\end{mycor}}
\theoremstyle{plain}
\newtheorem{mylem}[mythm]{\protect\lemmaname}
\renewenvironment{lem}{\begin{mylem}}{\end{mylem}}
\crefname{mylem}{Lemma}{Lemmas}
\theoremstyle{plain}
\newtheorem{myconjecture}{\protect\conjecturename}
\renewenvironment{conjecture}{\begin{myconjecture}}{\end{myconjecture}}
\theoremstyle{definition}
\newtheorem{myrem}[mythm]{\protect\remarkname}
\renewenvironment{rem}{\begin{myrem}}{\end{myrem}}
\theoremstyle{plain}
\newtheorem{myclaim}[mythm]{\protect\claimname}
\renewenvironment{claim}{\begin{myclaim}}{\end{myclaim}}
\theoremstyle{plain}
\newtheorem{myfakethm}[mythm]{``Theorem''}
\renewenvironment{sol}{\begin{myfakethm}}{\end{myfakethm}}
\let\originalleft\left
\let\originalright\right
\renewcommand{\left}{\mathopen{}\mathclose\bgroup\originalleft}
\renewcommand{\right}{\aftergroup\egroup\originalright}
\renewcommand*{\UrlTildeSpecial}{%
  \do\~{%
    \mbox{%
      \fontfamily{ptm}\selectfont
      \textasciitilde
    }%
  }%
}%
\let\Url@force@Tilde\UrlTildeSpecial
\tikzstyle{vertex}=[circle,draw=black,fill=black,inner sep=0,minimum size=0.2cm,text=white,font=\footnotesize]
\tikzset{every loop/.style={min distance=50,in=50,out=130,looseness=7}}
\let\OLDthebibliography\thebibliography
\renewcommand\thebibliography[1]{
  \OLDthebibliography{#1}
  \setlength{\parskip}{0pt}
  \setlength{\itemsep}{3pt plus 0.3ex}
}
\providecommand{\claimname}{Claim}
\providecommand{\conjecturename}{Conjecture}
\providecommand{\corollaryname}{Corollary}
\providecommand{\definitionname}{Definition}
\providecommand{\examplename}{Example}
\providecommand{\factname}{Fact}
\providecommand{\lemmaname}{Lemma}
\providecommand{\propositionname}{Proposition}
\providecommand{\remarkname}{Remark}
\providecommand{\solutionname}{Solution}
\providecommand{\theoremname}{Theorem}
\begin{document}
\title{Almost all Steiner triple systems have perfect matchings}
\author{Matthew Kwan \thanks{Department of Mathematics, Stanford University, Stanford, CA 94305.
Email: \href{mattkwan@stanford.edu}{\nolinkurl{mattkwan@stanford.edu}}.
This research was done while the author was working at ETH Zurich,
and is supported in part by SNSF project 178493.}}

\maketitle
\global\long\def\RR{\mathbb{R}}%

\global\long\def\QQ{\mathbb{Q}}%

\global\long\def\HH{\mathbb{H}}%

\global\long\def\E{\mathbb{E}}%

\global\long\def\Var{\operatorname{Var}}%

\global\long\def\CC{\mathbb{C}}%

\global\long\def\NN{\mathbb{N}}%

\global\long\def\ZZ{\mathbb{Z}}%

\global\long\def\GG{\mathbb{G}}%

\global\long\def\BB{\mathbb{B}}%

\global\long\def\DD{\mathbb{D}}%

\global\long\def\cL{\mathcal{L}}%

\global\long\def\supp{\operatorname{supp}}%

\global\long\def\one{\boldsymbol{1}}%

\global\long\def\range#1{\left[#1\right]}%

\global\long\def\d{\operatorname{d}\!}%

\global\long\def\falling#1#2{\left(#1\right)_{#2}}%

\global\long\def\f{\mathbf{f}}%

\global\long\def\im{\operatorname{im}}%

\global\long\def\sp{\operatorname{span}}%

\global\long\def\rank{\operatorname{rank}}%

\global\long\def\sign{\operatorname{sign}}%

\global\long\def\mod{\operatorname{mod}}%

\global\long\def\id{\operatorname{id}}%

\global\long\def\disc{\operatorname{disc}}%

\global\long\def\lindisc{\operatorname{lindisc}}%

\global\long\def\tr{\operatorname{tr}}%

\global\long\def\adj{\operatorname{adj}}%

\global\long\def\Unif{\operatorname{Unif}}%

\global\long\def\Po{\operatorname{Po}}%

\global\long\def\Bin{\operatorname{Bin}}%

\global\long\def\Ber{\operatorname{Ber}}%

\global\long\def\Geom{\operatorname{Geom}}%

\global\long\def\sat{\operatorname{sat}}%

\global\long\def\Hom{\operatorname{Hom}}%

\global\long\def\vol{\operatorname{vol}}%

\global\long\def\floor#1{\left\lfloor #1\right\rfloor }%

\global\long\def\ceil#1{\left\lceil #1\right\rceil }%

\global\long\def\cond{\,\middle|\,}%

\let\polishL\L

\global\long\def\L{\mathcal{L}}%

\DeclareRobustCommand{\L}{\ifmmode{\mathcal{L}}\else\polishL\fi}

\global\long\def\randS{\boldsymbol{S}}%

\global\long\def\S{\mathcal{S}}%

\global\long\def\Sm#1{\mathcal{S}_{#1}}%

\global\long\def\ord{\mathcal{O}}%

\global\long\def\ordm#1{\mathcal{O}_{#1}}%

\global\long\def\cordm#1#2{\mathcal{O}_{#2}^{#1}}%

\global\long\def\cSm#1#2{\mathcal{S}_{#2}^{#1}}%

\global\long\def\ext#1{\mathcal{O}^{*}\left(#1\right)}%

\global\long\def\randN{\boldsymbol{N}}%

\global\long\def\randX{\boldsymbol{X}}%

\global\long\def\rg#1#2{\RR\left(#1,#2\right)}%

\global\long\def\Gnp#1#2{\GG\left(#1,#2\right)}%

\global\long\def\GSnp#1#2{\GG^{*}\left(#1,#2\right)}%

\global\long\def\randG{\boldsymbol{G}}%

\global\long\def\randY{\boldsymbol{Y}}%

\global\long\def\randZ{\boldsymbol{Z}}%

\global\long\def\randM{\boldsymbol{M}}%

\global\long\def\randL{\boldsymbol{L}}%

\begin{abstract}
We show that for any $n$ divisible by 3, almost all order-$n$ Steiner
triple systems have a perfect matching (also known as a \emph{parallel
class} or \emph{resolution class}). In fact, we prove a general upper
bound on the number of perfect matchings in a Steiner triple system
and show that almost all Steiner triple systems essentially attain
this maximum. We accomplish this via a general theorem comparing a
uniformly random Steiner triple system to the outcome of the triangle
removal process, which we hope will be useful for other problems.
Our methods can also be adapted to other types of designs; for example,
we sketch a proof of the theorem that almost all Latin squares have
transversals.
\end{abstract}

\section{Introduction}

A \emph{Steiner triple system} of order $n$ is a collection $S$
of size-$3$ subsets of $\range n=\left\{ 1,\dots,n\right\} $ (that
is, a \emph{$3$-uniform hypergraph} on the vertex set $\range n$),
such that every pair of vertices is included in exactly one hyperedge
of $S$. Steiner triple systems are among the most fundamental types
of combinatorial designs, and have strong connections to a wide range
of different subjects, ranging from group theory, to finite geometry,
to experimental design, to the theory of error-correcting codes. See
\cite{CR99} for an introduction to the subject. Observe that a Steiner
triple system is actually nothing more than a partition of the edges
of the complete graph $K_{n}$ into triangles (a \emph{triangle-decomposition}
of $K_{n}$), so Steiner triple systems are natural ``symmetric''
counterparts to \emph{Latin squares}, which can be defined as triangle-decompositions
of the complete tripartite graph $K_{n,n,n}$.

In 1974 Wilson \cite{Wil74} used estimates for the number of Latin
squares to prove a coarse estimate for the number of Steiner triple
systems. Babai \cite{Bab80} used this estimate to prove that almost
all Steiner triple systems have trivial automorphism group (that is
to say, a uniformly random order-$n$ Steiner triple system a.a.s.\footnote{By ``asymptotically almost surely'', or ``a.a.s.'', we mean that
the probability of an event is $1-o\left(1\right)$. Here and for
the rest of the paper, asymptotics are as $n\to\infty$.} has trivial automorphism group). We believe this is the only nontrivial
property known to hold a.a.s. for random Steiner triple systems. Following
Erd\H os and R\'enyi's seminal paper \cite{ER59} on random graphs
and Erd\H os' popularization of the probabilistic method, there have
been great developments in the theory of random combinatorial structures
of all kinds, but essentially none of the tools developed seem to
be applicable to Steiner triple systems. Steiner triple systems lack
independence or any kind of recursive structure, which rules out many
of the techniques used to study Erd\H os\textendash R\'enyi random
graphs and random permutations, and there is basically no freedom
to make local changes, which precludes the use of ``switching''
techniques often used in the study of random regular graphs (see for
example \cite{KSVW01}). It is not even clear how to study random
Steiner triple systems empirically; in an attempt to find an efficient
algorithm to generate a random Steiner triple system, Cameron \cite{Cam02}
designed a Markov chain on Steiner triple systems, but he was not
able to determine whether this chain was connected.

In a recent breakthrough, Keevash \cite{Kee14} proved that for a
large class of combinatorial designs generalising Steiner triple systems,
``partial'' designs satisfying a certain ``quasirandomness'' condition
can be completed into designs. Shortly afterwards \cite{Kee15}, he
showed that his results could be used for approximate enumeration;
in particular, matching an upper bound due to Linial and Luria \cite{LL13}
he proved that there are
\begin{equation}
\left(n/e^{2}+o\left(n\right)\right)^{n^{2}/6}\label{eq:num-STS}
\end{equation}
Steiner triple systems of order $n$, as long as $n$ satisfies a
necessary divisibility condition (Steiner triple systems can only
exist if $n$ is 1 or 3 mod 6).

Of course, this new estimate makes it possible, in theory, to prove
new a.a.s. properties of random Steiner triple systems just by giving
an estimate asymptotically smaller than \ref{eq:num-STS} for the
number of Steiner triple systems not satisfying a certain property.
However, for most properties it is not at all clear how to prove such
estimates. Instead, we introduce a way to use Keevash's methods to
show that a uniformly random Steiner triple system can in some sense
be approximated by the outcome of a random process called the \emph{triangle
removal process}. We remark that actually Keevash proved \ref{eq:num-STS}
with a randomised construction that involves the triangle removal
process, so many properties that hold a.a.s. in the triangle removal
process trivially hold a.a.s. in this random construction. Such results
have been proved in \cite[Proposition~3.1]{LL15} and \cite{LLR15}.
However, the Steiner triple systems obtainable by Keevash's construction
comprise a negligible proportion of the set of Steiner triple systems,
and a somewhat more delicate approach is required to study a uniformly
random Steiner triple system. In \ref{sec:random-STS} we state a
general theorem summarising our method.

A \emph{matching }in a hypergraph is a collection of disjoint edges,
and a \emph{perfect matching }is a matching covering the entire vertex
set. The existence of perfect matchings is one of the most central
questions in the theory of graphs and hypergraphs; in particular,
some of the most important recent developments in the field are the
work of Johansson, Kahn and Vu \cite{JKV08} on perfect matchings
in random hypergraphs, and the theory of Keevash and Mycroft \cite{KM15}
characterising when dense hypergraphs have perfect matchings. We should
also mention the ``nibble'' method of R\"odl \cite{Rod85} for
finding almost-perfect hypergraph matchings, which has had a significant
influence throughout combinatorics in the last 30 years. A perfect
matching in a Steiner triple system is also called a\emph{ parallel
class} or \emph{resolution class}, and has particular significance.
One of the oldest problems in combinatorics, famously solved in the
affirmative by Ray-Chaudhuri and Wilson \cite{RW71}, asks whether
for all $n\equiv3\mod6$ there exists an order-$n$  Steiner triple
system which can be partitioned into hyperedge-disjoint perfect matchings
(a \emph{Kirkman triple system}). Alon, Kim and Spencer \cite{AKS97}
proved that every Steiner triple system has an almost-perfect matching
covering all but $o\left(\sqrt{n}\log^{3/2}n\right)$ vertices, and
Bryant and Horsley \cite{BH15} proved that for infinitely many $n\equiv3\mod6$
there exist Steiner triple systems with no perfect matching. As an
application of our new method, we prove our main result that if $n\equiv3\mod6$
(that is, if $3\mid n$ and an order-$n$ Steiner triple system exists)
then almost all order-$n$ Steiner triple systems have many perfect
matchings. 
\begin{thm}
\label{thm:PM-in-STS}Let $n\equiv3\mod6$ and let $\randS$ be a
uniformly random order-$n$ Steiner triple system. Then a.a.s. $\randS$
contains
\[
\left(\left(1-o\left(1\right)\right)\frac{n}{2e^{2}}\right)^{n/3}
\]
perfect matchings.
\end{thm}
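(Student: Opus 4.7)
The plan is to apply the general comparison theorem of Section~\ref{sec:random-STS} to reduce the problem to a statement about the triangle removal process, and then count perfect matchings in the output of the process via a greedy/pseudorandomness argument. By that comparison theorem, it suffices to show that the Steiner triple system $\randS$ produced by the triangle removal process contains at least $\bigl((1-o(1))n/(2e^{2})\bigr)^{n/3}$ perfect matchings with probability sufficiently close to $1$ for the comparison theorem to be invoked.

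To estimate the number of perfect matchings in $\randS$, I would count \emph{ordered} sequences $(T_{1},\ldots,T_{n/3})$ of pairwise disjoint triples of $\randS$ and divide by $(n/3)!$. At step $i$, write $U_{i}:=[n]\setminus(T_{1}\cup\cdots\cup T_{i-1})$ for the uncovered set, of size $n-3(i-1)$. A pseudorandomness estimate for the triangle removal process should imply that, for generic prefixes $T_{1},\ldots,T_{i-1}$, the number of triples of $\randS$ lying entirely inside $U_{i}$ is $(1+o(1))|U_{i}|^{3}/(6n)$: each of the $\binom{|U_{i}|}{2}$ pairs in $U_{i}$ belongs to a unique triple of $\randS$, whose third vertex lies in $U_{i}$ with density roughly $|U_{i}|/n$, and each such triple is counted three times. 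Applying Stirling's formula to
\[
\frac{1}{(n/3)!}\prod_{i=0}^{n/3-1}\frac{(n-3i)^{3}}{6n}
\]
then yields the desired lower bound $\bigl((1-o(1))\tfrac{n}{2e^{2}}\bigr)^{n/3}$.

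The main obstacle is that the pseudorandomness estimate deteriorates as $|U_{i}|$ shrinks; once $|U_{i}|$ falls below a threshold such as $n^{1/2+\varepsilon}$, the induced partial Steiner system on $U_{i}$ is no longer well controlled by available results on the triangle removal process. To handle this I would cut off the greedy counting at some step $i^{\star}$ with $|U_{i^{\star}}|=n^{1-\delta}$ for a small $\delta>0$, and argue separately that almost every such partial matching extends to a full perfect matching---for example, via an absorber-style construction or a nibble/augmenting argument in the spirit of Alon--Kim--Spencer---losing only a multiplicative $(1+o(1))^{n}$ factor. Concentration of the per-step counts through the first $i^{\star}$ steps should follow from a Freedman/Azuma martingale inequality applied to the number of valid triples, and a union bound over the $O(n)$ steps should comfortably stay within the failure-probability budget required by the comparison theorem.
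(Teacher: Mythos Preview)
Your plan has the right counting skeleton (and indeed the paper's Lemma~5.5 uses exactly your product-over-steps computation for the ``main step''), but it rests on a misreading of the comparison theorem and badly underestimates where the real work lies.

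First, the comparison theorem (Theorem~\ref{lem:triangle-removal-transfer}) does \emph{not} hand you a complete Steiner triple system produced by the triangle removal process. It only transfers properties of the \emph{first $\alpha N$ hyperedges} (for a fixed $\alpha<1$) of a uniformly random ordered Steiner triple system from the corresponding partial system in $\RR(n,\alpha N)$. So ``count perfect matchings in the Steiner triple system produced by the process'' is not a meaningful step: that process outputs a partial system with no perfect matchings at all. The paper instead uses the comparison theorem only to certify structural properties of $\randS$ that are witnessed by its first few hyperedges (existence of absorbers, degree into a flexible set), and uses a separate symmetry argument (looking at the $k$th block $\randS_{k\alpha N}\setminus\randS_{(k-1)\alpha N}$ for each $k\le 1/\alpha$) to establish the density condition for the \emph{full} system. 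Your per-step estimate ``$(1+o(1))|U_i|^3/(6n)$ triples of $\randS$ inside $U_i$'' is a statement about all of $\randS$, not about a partial system, and is not available from the comparison theorem alone.

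Second, the endgame you defer (``argue separately that almost every such partial matching extends\ldots via an absorber-style construction or a nibble/augmenting argument'') is in fact the heart of the proof, not a detail. Alon--Kim--Spencer only gives an almost-perfect matching; it cannot close the gap. The paper builds a specific absorbing structure \emph{before} the greedy counting begins: it fixes a flexible set $Z$, shows (using the comparison theorem with conditioning on quasirandomness, which is genuinely delicate---see Claim~5.7) that every triple of vertices has $\Omega(n)$ vertex-disjoint absorbers, and glues these onto a resilient template. The greedy counting then runs on $V\setminus V(H)$ down to $\Theta(\delta^5 n)$ uncovered vertices (with $\delta=1/\log^2 n$, so still almost linear in $n$), after which the remaining vertices are routed through $Z$ and the absorbing structure finishes the matching deterministically. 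Your proposed cutoff at $|U_{i^\star}|=n^{1-\delta}$ for fixed $\delta>0$ is both unnecessary (the density estimate is only needed down to near-linear sizes) and, without the absorber scaffolding already in place, leaves you with no mechanism to complete the matching.
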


We remark that if $n\equiv1\mod6$ then obviously no order-$n$ Steiner
triple system can have a perfect matching, but exactly the same proof
can be used to show that in a random order-$n$ Steiner triple system
there is a.a.s. a matching covering all but one vertex.

We prove \ref{thm:PM-in-STS} using our new method combined with the
so-called \emph{absorbing method}, which was introduced as a general
method by R\"odl, Ruci\'nski and Szemer\'edi \cite{RRS09} (the
basic idea had been used earlier, for example by Krivelevich \cite{Kri97}).
Basically, we prove the a.a.s. existence of certain substructures
that are ``flexible'' and allow us to complete an almost-perfect
matching into a perfect one.

Up to the error term, a random Steiner triple system actually has
the maximum possible number of perfect matchings: we also prove the
following upper bound.
\begin{thm}
\label{thm:maximum-PMs}Any Steiner triple system $S$ has at most
\[
\left(\left(1+o\left(1\right)\right)\frac{n}{2e^{2}}\right)^{n/3}
\]
perfect matchings.
\end{thm}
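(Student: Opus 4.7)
The plan is a weighted entropy argument in the spirit of Linial--Luria's upper bound for the number of Steiner triple systems. Let $S$ be an order-$n$ STS with $N$ perfect matchings, let $\randM$ be a uniformly random PM of $S$, and independently let $\boldsymbol{\tau}$ be a uniformly random ordering of its $n/3$ triples. Writing $T_i$ for the $i$th triple of $\randM$ according to $\boldsymbol{\tau}$, the pair $(\randM,\boldsymbol{\tau})$ is in bijection with the ordered sequence $(T_1,\ldots,T_{n/3})$, so by the chain rule
\[
\log N + \log\bigl((n/3)!\bigr) \;=\; H(\randM,\boldsymbol{\tau}) \;=\; \sum_{i=1}^{n/3} H\bigl(T_i \,\bigm|\, T_1,\ldots,T_{i-1}\bigr).
\]
Given $T_1,\ldots,T_{i-1}$, the triple $T_i$ is constrained to lie in $V_{i-1} := [n]\setminus(T_1\cup\cdots\cup T_{i-1})$, so $H(T_i\mid T_{<i}) \le \E\log \boldsymbol{t}_{i-1} \le \log \E \boldsymbol{t}_{i-1}$ (by Jensen), where $\boldsymbol{t}_{i-1}$ denotes the number of triples of $S$ lying in $V_{i-1}$.

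The key computation is $\E \boldsymbol{t}_{i-1} = \sum_{T \in S}\Pr[T\subseteq V_{i-1}]$ with $m := n-3(i-1)$. Two cases arise. If $T\in \randM$ (which contributes $n/3$ triples to $S$), then $T\subseteq V_{i-1}$ iff $T$ is not among the first $i-1$ slots of $\boldsymbol{\tau}$, giving probability $m/n$. If $T\notin \randM$ (contributing $n(n-3)/6$ triples to $S$), then the three vertices of $T$ lie in three \emph{distinct} triples of $\randM$ (by the linearity of $S$), so $T\subseteq V_{i-1}$ iff none of those three triples is among the first $i-1$, giving probability $\binom{n/3-3}{i-1}/\binom{n/3}{i-1}$. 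Summing yields
\[
\E \boldsymbol{t}_{i-1} \;=\; \frac{m}{3} \;+\; \frac{m(m-3)(m-6)}{6(n-6)} \;\le\; \bigl(1+O(n^{-1})\bigr)\!\left(\frac{m}{3} + \frac{m^3}{6n}\right).
\]
The crucial feature here is the factor $(m/n)^3$ rather than $m/n$ for triples not in $\randM$: averaging over $\randM$ thereby saves two factors of $m/n$ relative to the naive pointwise bound $\boldsymbol{t}_{i-1}\le \binom{m}{2}/3$, and this is ultimately what produces the $1/e^2$ savings in the final bound.

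Summing $\log \E \boldsymbol{t}_{i-1}$ over $i=1,\ldots,n/3$ and subtracting $\log((n/3)!)$, a routine Stirling calculation (splitting the range of $m$ at $\sqrt{n}$ to control the subleading $m/3$ term, then using $\log((n/3)!) = (n/3)\log(n/(3e))+O(\log n)$) yields $\log N \le (n/3)\log(n/(2e^2))+O(\sqrt{n}\log n)$, equivalent to the claimed bound. There is no serious obstacle: the only substantive idea is the twofold randomisation, of both the PM \emph{and} the ordering, which is what forces the ``non-PM'' triples into the more favourable probability $(m/n)^3$. Everything else is standard manipulation via Jensen's inequality and Stirling's formula.
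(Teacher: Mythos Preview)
Your argument is correct. The key structural observation---that a triple $T\in S\setminus\randM$ has its three vertices in three \emph{distinct} triples of $\randM$---is exactly the same one the paper uses, and your computation of $\E\boldsymbol t_{i-1}$ and the subsequent Stirling estimate are sound.

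The route is, however, organised differently from the paper's. The paper reveals $\randM$ \emph{vertex by vertex}: for each $v$ it bounds the support of $\randM_v$ given $\{\randM_{v'}:\lambda_{v'}>\lambda_v\}$, where $\lambda$ is a sequence of i.i.d.\ uniform $[0,1]$ labels on vertices. The structural fact above manifests as the exponent $6$ in $R_v^{M,\lambda_v}=1+\bigl((n-1)/2-1\bigr)\lambda_v^6$, and the final bound comes from the closed-form integral $\int_0^1 t^2\log(1+Ct^6)\,dt$. You instead reveal $\randM$ \emph{triple by triple} in a uniformly random order $\boldsymbol\tau$, absorbing the extra entropy $\log((n/3)!)$ on the left-hand side; the same structural fact appears as the probability $\binom{n/3-3}{i-1}/\binom{n/3}{i-1}\sim (m/n)^3$, and the bound comes from a discrete sum and Stirling rather than an integral. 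Your version is arguably the more direct implementation for perfect matchings and avoids the continuous-label apparatus; the paper's version has the advantage of mirroring the Linial--Luria proof (and the paper's own Theorem~3.1) almost line for line, which makes the parallel between counting completions and counting matchings transparent.
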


The proof of \ref{thm:maximum-PMs} is quite short, and uses the notion
of \emph{entropy}. This particular type of argument was introduced
by Radhakrishnan \cite{Rad97} and further developed by Linial and
Luria \cite{LL13}, among others.

\subsection{Latin squares}

An order-$n$ Latin square is usually defined as an $n\times n$ array
of the numbers between 1 and $n$ (we call these \emph{symbols}),
such that each row and column contains each symbol exactly once. As
mentioned earlier, this is equivalent to a 3-uniform hypergraph whose
hyperedges comprise a triangle-decomposition of the edges of the complete
tripartite graph $K_{n,n,n}$ (the three parts correspond to the rows,
columns and symbols, so a triangle $\left(i,j,k\right)$ corresponds
to putting the symbol $k$ in the cell $\left(i,j\right)$). A perfect
matching in this hypergraph is called a \emph{transversal }and the
property of containing a transversal is of great interest. In particular,
the famous Ryser\textendash Brualdi\textendash Stein conjecture speculates
that every odd-order Latin square has a transversal, and every even-order
Latin square has a partial transversal of size $n-1$. Although this
conjecture remains wide open, there have been many partial results
and generalizations; see for example \cite{ES91,CW05,HS08,AB09,Pok16}.
See \cite{Wan11} for an introduction to the subject of Latin transversals,
and \cite{DK15} for an introduction to Latin squares in general.

The counterpart of \ref{thm:maximum-PMs} for Latin squares, that
a Latin square can have no more than $\left(\left(1+o\left(1\right)\right)n/e^{2}\right)^{n}$
transversals, was first proved by Taranenko \cite{Tar15}. Glebov
and Luria \cite{GL16} gave a simpler entropy-based proof of the same
fact and asked whether the counterpart of \ref{thm:PM-in-STS} holds:
do almost all Latin squares have essentially the maximum possible
number of transversals? Although there do exist a number of techniques
for studying random Latin squares (see for example \cite{Cam92,HJ96,MW99,CGW08,KS16}),
none of them seem suitable to attack this question.

In the time since the first version of this paper, Keevash \cite{Kee18}
has generalised his methods to a number of different classes of designs,
including Latin squares. Using a result from this recent work, it
is straightforward to adapt the methods used to prove \ref{thm:PM-in-STS}
to prove the following theorem, answering Glebov and Luria's question
and proving that the Ryser\textendash Brualdi\textendash Stein conjecture
holds for almost all Latin squares.
\begin{thm}
\label{thm:latin}Let $\randL$ be a uniformly random order-$n$ Latin
square. Then a.a.s. $\randL$ contains
\[
\left(\left(1-o\left(1\right)\right)\frac{n}{e^{2}}\right)^{n}
\]
transversals.
\end{thm}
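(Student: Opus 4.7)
The proof of \ref{thm:latin} should mirror the proof of \ref{thm:PM-in-STS}, using as a black box the analog for Latin squares of the author's comparison theorem between a uniformly random Steiner triple system and the triangle removal process. The upper bound matching $((1+o(1))n/e^2)^n$ is already known by Taranenko \cite{Tar15} and Glebov--Luria \cite{GL16}, so the substance lies in proving the lower bound in distribution.

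The plan is as follows. First, combining Keevash's recent extension theorem for Latin squares \cite{Kee18} with the same scheme used in \ref{sec:random-STS} for Steiner triple systems, one obtains a comparison theorem: a uniformly random order-$n$ Latin square $\randL$ can be ``approximated'' by the outcome of the natural tripartite analog of the triangle removal process on $K_{n,n,n}$ (i.e.\ iteratively and uniformly at random remove tripartite triangles until none remain), in the sense that any event that holds a.a.s.\ in the random process and is suitably monotone or robust also holds a.a.s.\ for $\randL$. The inputs to this step are exactly the two ingredients Keevash provides in the Latin-square setting: existence of completions of quasirandom partial Latin squares, and an asymptotic formula $((1+o(1))n/e^2)^{n^2}$ for the number of order-$n$ Latin squares.

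Second, one runs an absorbing-method argument on the tripartite triangle removal process. The idea is to reserve, at the start of the process, a small random collection of vertex-disjoint ``flexible gadgets'' — each a small sub-configuration of tripartite triangles with the property that it can be completed to cover its vertex set in many different ways. One then runs the process to near-completion, applies a nibble/counting argument to exhibit many almost-perfect matchings on the remaining vertex set, and uses the flexibility of the absorbing structure to extend each almost-perfect matching to a full transversal, with different almost-perfect matchings yielding (mostly) different transversals. The count of almost-perfect matchings produced this way is $((1-o(1))n/e^2)^n$, matching the upper bound.

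The main obstacle is the first step: setting up the comparison theorem between the uniform random Latin square and the random tripartite triangle removal process. Everything of real difficulty in \ref{thm:PM-in-STS} is concentrated there, and the translation to the Latin square setting, while technically routine given \cite{Kee18}, requires faithfully rewriting the key definitions (quasirandom partial design, the coupling, the approximation parameters) in the tripartite framework and checking that the relevant estimates still apply. Once this is done, the construction of absorbers and the enumeration of almost-perfect matchings proceed by direct analogy with \ref{thm:PM-in-STS}, simply replacing triangles of $K_n$ with tripartite triangles of $K_{n,n,n}$ throughout.
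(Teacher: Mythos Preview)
Your proposal is correct and follows essentially the same route as the paper: set up the tripartite analogue of \ref{lem:triangle-removal-transfer} using Keevash's completion theorem for Latin squares (\ref{conj:keevash-latin}), then rerun the absorbing argument of \ref{sec:absorbers} with tripartite versions of the absorber, resilient template, and \ref{lem:PM-via-absorbers}. One small clarification: the absorbing structure is not ``reserved at the start of the process'' but rather shown to exist in the final Latin square via the transfer theorem (as in \ref{subsec:absorbers}), and the relevant counting input is not the global enumeration of Latin squares but the finer estimate on the number of completions of a fixed quasirandom partial Latin square (the analogue of \ref{lem:num-extensions}), which one derives from \ref{conj:keevash-latin} together with the entropy upper bound.
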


\subsection{Structure of the paper}

The structure of this paper is as follows. In \ref{sec:random-STS}
we present our general theorem for comparing random Steiner triple
systems with the triangle removal process. The proof requires a straightforward
but necessary generalization of \ref{eq:num-STS}, estimating the
number of completions of a partial Steiner triple system, and this
in turn requires a routine analysis of the triangle removal process.
These parts of the proof are deferred to \ref{sec:completion-count}
and \ref{sec:random-triangle-removal}. In \ref{sec:absorbers} we
use the theory from \ref{sec:random-STS} and the absorbing method
to prove \ref{thm:PM-in-STS}. In \ref{sec:max-num-matchings} we
prove \ref{thm:maximum-PMs}, and in \ref{sec:latin} we explain how
to adapt our methods to prove \ref{thm:latin}. Finally, in \ref{sec:concluding}
we have some concluding remarks, including a long list of open problems.

\subsection{Notation}

We use standard asymptotic notation throughout. For functions $f=f\left(n\right)$
and $g=g\left(n\right)$:
\begin{itemize}
\item $f=O\left(g\right)$ means there is a constant $C$ such that $\left|f\right|\le C\left|g\right|$,
\item $f=\Omega\left(g\right)$ means there is a constant $c>0$ such that
$f\ge c\left|g\right|$,
\item $f=\Theta\left(g\right)$ means that $f=O\left(g\right)$ and $f=\Omega\left(g\right)$,
\item $f=o\left(g\right)$ means that $f/g\to0$ as $n\to\infty$. To say
that a.a.s. $f=o\left(g\right)$ means that for any $\varepsilon>0$,
a.a.s. $f/g<\varepsilon$.
\end{itemize}
Also, following \cite{Kee15}, the notation $f=1\pm\varepsilon$ means
$1-\varepsilon\le f\le1+\varepsilon$.

We also use standard graph theory notation: $V\left(G\right)$ and
$E\left(G\right)$ are the sets of vertices and (hyper)edges of a
(hyper)graph $G$, and $v\left(G\right)$ and $e\left(G\right)$ are
the cardinalities of these sets. The subgraph of $G$ induced by a
vertex subset $U$ is denoted $G\left[U\right]$, the degree of a
vertex $v$ is denoted $\deg_{G}\left(v\right)$, and the subgraph
obtained by deleting $v$ is denoted $G-v$.

For a positive integer $n$, we write $\range n$ for the set $\left\{ 1,2,\dots,n\right\} $.
For a real number $x$, the floor and ceiling functions are denoted
$\floor x=\max\left\{ i\in\ZZ:i\le x\right\} $ and $\ceil x=\min\left\{ i\in\ZZ:i\ge x\right\} $.
We will however mostly omit floor and ceiling signs and assume large
numbers are integers, wherever divisibility considerations are not
important. All logarithms are in base $e$.

Finally, we remark that throughout the paper we adopt the convention
that random variables (and random objects more generally) are printed
in bold.

\subsection{Acknowledgements}

The author would like to thank Asaf Ferber and Benny Sudakov for very
helpful discussions about random designs and the intricacies of the
absorbing method, Patrick Morris and Stefan Glock for pointing out
some oversights in earlier versions of this paper, and Lutz Warnke
for pointing out some related references.

\section{\label{sec:random-STS}Random Steiner triple systems via the triangle
removal process}

In this section we describe our method for comparing random Steiner
triple systems with the outcome of the triangle removal process. Define
\[
N={n \choose 2}/3=\left(1+o\left(1\right)\right)n^{2}/6
\]
to be the number of hyperedges in a Steiner triple system. We assume
throughout this section that $n$ is 1 or 3 mod 6.
\begin{defn}[partial systems]
A \emph{partial Steiner triple system} (or \emph{partial system}
for short) is a $3$-uniform hypergraph on $\range n$ in which every
pair of vertices is included in no more than one hyperedge. Let $\Sm m$
be the set of partial systems with $m$ hyperedges. We will also want
to consider partial systems equipped with an ordering on their hyperedges.
Let $\ord$ be the set of ordered Steiner triple systems, and let
$\ordm m$ be the set of ordered partial systems with $m$ hyperedges.
For $S\in\ordm m$ and $i\le m$, let $S_{i}$ be the ordered partial
system consisting of just the first $i$ hyperedges of $S$. For a
(possibly ordered) partial system $S$, let $G\left(S\right)$ be
the graph with an edge for every pair of vertices which does not\emph{
}appear in any hyperedge of $S$. So, if $S$ has $m$ hyperedges,
then $G\left(S\right)$ has ${n \choose 2}-3m$ edges.
\end{defn}

\begin{defn}[quasirandomness]
For a graph $G$ with $n$ vertices and $m$ edges, let $d\left(G\right)=m/{n \choose 2}$
denote its density. We say $G$ is \emph{$\left(\varepsilon,h\right)$-quasirandom}
if for every set $A$ of at most $h$ vertices, we have $\left|\bigcap_{w\in A}N_{G}\left(w\right)\right|=\left(1\pm\varepsilon\right)d\left(G\right)^{\left|A\right|}n$.
Let $\cSm{\varepsilon,h}m\subseteq\Sm m$ be the set of partial systems
$S\in\Sm m$ such that $G\left(S\right)$ is $\left(\varepsilon,h\right)$-quasirandom,
and let $\cordm{\varepsilon,h}m\subseteq\ordm m$ be the set of ordered
partial systems $S\in\ordm m$ such that $S_{i}\in\cSm{\varepsilon,h}i$
for each $i\le m$.
\end{defn}

\begin{defn}[the triangle removal process]
The triangle removal process is defined as follows. Start with the
complete graph $K_{n}$ and iteratively delete a triangle chosen uniformly
at random from all triangles in the remaining graph. If we continue
this process for $m$ steps, the deleted triangles (in order) can
be interpreted as an ordered partial system in $\ordm m$. It is also
possible that the process aborts (because there are no triangles left)
before $m$ steps, in which case we say it returns the value ``$*$''.
We denote by $\rg nm$ the resulting distribution on $\ordm m\cup\left\{ *\right\} $.
\end{defn}

Now, we can state a general theorem comparing random Steiner triple
systems with the triangle removal process. Basically, if we can show
that the first few edges of the triangle removal process (as an ordered
partial system) satisfy some property with extremely high probability,
then it follows that the first few edges of a uniformly random ordered
Steiner triple system satisfy the same property with high probability.
Moreover, it suffices to study the triangle removal process conditioned
on some ``good'' event, provided that this event contains the event
that our partial system is sufficiently quasirandom.
\begin{thm}
\label{lem:triangle-removal-transfer}Fixing $h\in\NN$ and sufficiently
small $a>0$, there is $b=b\left(a,h\right)>0$ such that the following
holds. Fix $\alpha\in\left(0,1\right)$, let $\mathcal{P}\subseteq\ordm{\alpha N}$
be a property of ordered partial systems, let $\varepsilon=n^{-a}$,
 let $\mathcal{Q}\supseteq\cordm{\varepsilon,h}{\alpha N}$, let $\randS\in\ord$
be a uniformly random ordered Steiner triple system and let $\randS'\sim\rg n{\alpha N}$.
If
\[
\Pr\left(\randS'\notin\mathcal{P}\cond\randS'\in\mathcal{Q}\right)\le\exp\left(-n^{2-b}\right)
\]
then
\[
\Pr\left(\randS_{\alpha N}\notin\mathcal{P}\right)\le\exp\left(-\Omega\left(n^{1-2a}\right)\right).
\]
\end{thm}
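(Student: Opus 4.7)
The strategy is to compare the two distributions pointwise on the set of quasirandom ordered partial systems $\cordm{\varepsilon,h}{\alpha N}$, and separately to show that both distributions concentrate on this set. The crucial enumerative input is a completion-count theorem, to be proved in \ref{sec:completion-count}, refining Keevash's estimate \ref{eq:num-STS} to count the Steiner triple systems that extend a given quasirandom partial system.

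For $T\in\ordm{\alpha N}$ let $Z(T)$ be the number of Steiner triple systems containing the unordered collection of hyperedges of $T$. The two distributions then have the exact forms
\[\Pr(\randS_{\alpha N}=T)=\frac{Z(T)(N-\alpha N)!}{|\ord|},\qquad\Pr(\randS'=T)=\prod_{i=0}^{\alpha N-1}\tau(G(T_i))^{-1},\]
where $\tau$ denotes the triangle count. The completion count of \ref{sec:completion-count} gives $Z(T)=Z^*(n,\alpha)\exp(\pm n^{2-b_1})$ uniformly over $T\in\cordm{\varepsilon,h}{\alpha N}$, for some function $Z^*$ depending only on $n$ and $\alpha$ and an exponent $b_1$ that can be made larger than any prescribed constant by taking $a$ small and $h$ large. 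In parallel, iterating the definition of $(\varepsilon,h)$-quasirandomness (as in \ref{sec:random-triangle-removal}) shows $\prod_{i}\tau(G(T_i))=P^*(n,\alpha)\exp(\pm n^{2-b_1})$. Dividing, the $T$-dependence cancels and one obtains the pointwise bound
\[\frac{\Pr(\randS_{\alpha N}=T)}{\Pr(\randS'=T)}=R^*(n,\alpha)\exp(\pm n^{2-b_1})\qquad(\ast)\]
uniformly in $T\in\cordm{\varepsilon,h}{\alpha N}$. Since $\Pr(\randS'\in\cordm{\varepsilon,h}{\alpha N})\ge 1-\exp(-n^{2-b_0})$ by the analysis in \ref{sec:random-triangle-removal}, and $\Pr(\randS_{\alpha N}\in\cordm{\varepsilon,h}{\alpha N})\le 1$, summing $(\ast)$ forces $R^*\le 2\exp(n^{2-b_1})$.

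With the ratio bound in hand the theorem is assembled from
\[\Pr(\randS_{\alpha N}\notin\mathcal{P})\le\Pr(\randS_{\alpha N}\notin\cordm{\varepsilon,h}{\alpha N})+\sum_{T\in\cordm{\varepsilon,h}{\alpha N}\setminus\mathcal{P}}\Pr(\randS_{\alpha N}=T).\]
The second sum is bounded via $(\ast)$ and the hypothesis by $R^*\exp(n^{2-b_1})\Pr(\randS'\in\mathcal{Q}\setminus\mathcal{P})\le\exp(O(n^{2-b_1})-n^{2-b})$, which is below $\exp(-\Omega(n^{1-2a}))$ for suitable $b<1+2a$ once $b_1$ is chosen sufficiently large. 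The first term is controlled by a union bound over the first step $i\le\alpha N$ and vertex set $A$ with $|A|\le h$ at which quasirandomness fails; applying the single-step version of $(\ast)$ to the quasirandom prefix $\randS_{\alpha N,i-1}$ transfers the Chernoff-type concentration $\exp(-\Omega(\varepsilon^{2}n))=\exp(-\Omega(n^{1-2a}))$ that holds in the triangle removal process (\ref{sec:random-triangle-removal}), with a multiplicative loss of $\exp(O(n^{2-b_1}))$ that is dominated by the Chernoff exponent provided $b_1>1+2a$. A polynomial union bound over $(i,A)$ then finishes the argument.

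The main obstacle is the completion count of \ref{sec:completion-count}: Keevash's enumeration \ref{eq:num-STS} must be upgraded to count completions of an arbitrary quasirandom partial system, with a multiplicative error as small as $\exp(\pm n^{2-b_1})$ for $b_1$ larger than $1+2a$, and with that error uniform in $T$. This requires a careful retracing of Keevash's randomised construction, tracking its sensitivity to the partial system being completed; the rest of the proof of \ref{lem:triangle-removal-transfer} is really just organised bookkeeping around $(\ast)$.
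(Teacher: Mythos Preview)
Your overall architecture matches the paper's: compare the two distributions pointwise on $\cordm{\varepsilon,h}{\alpha N}$ via the completion count and the triangle-count product, then split $\Pr(\randS_{\alpha N}\notin\mathcal P)$ into the quasirandom and non-quasirandom parts. The handling of the second (quasirandom) term is essentially correct and is what the paper does.

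The genuine gap is in your treatment of the first term $\Pr(\randS_{\alpha N}\notin\cordm{\varepsilon,h}{\alpha N})$. Your plan is to transfer the Chernoff-type bound $\exp(-\Omega(n^{1-2a}))$ from the triangle removal process using the ratio estimate $(\ast)$, absorbing a multiplicative loss of $\exp(O(n^{2-b_1}))$. For this to yield anything, you need $n^{2-b_1}=o(n^{1-2a})$, i.e.\ $b_1>1+2a$; you assert that $b_1$ ``can be made larger than any prescribed constant by taking $a$ small and $h$ large''. This is false. The completion-count error is governed on one side by the entropy upper bound, which gives an error factor of $(1+O(n^{-a}+n^{-1/2}))^{N}$, so $b_1\le\min\{a,1/2\}$; and on the other side by Keevash's completion theorem, whose parameters $\ell$ and the internal exponent are absolute constants, forcing $b_1$ below another absolute constant strictly less than $1$. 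No choice of $a,h$ pushes $b_1$ above $1$, so the multiplicative loss $\exp(O(n^{2-b_1}))$ swamps the $\exp(-\Omega(n^{1-2a}))$ you are trying to transfer, and your bound on the first term evaporates.

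The paper sidesteps this entirely: it bounds $\Pr(\randS_{\alpha N}\notin\cordm{\varepsilon,h}{\alpha N})$ \emph{directly}, with no transfer and no completion count, by observing that for \emph{any fixed} Steiner triple system $S$, a uniformly random ordering makes $\randS_m$ a uniformly random $m$-subset of the hyperedges of $S$. The common-neighbourhood counts $|\bigcap_{w\in A}N_{G(\randS_m)}(w)|$ are then sums of (essentially) independent indicators, and Azuma--Hoeffding gives the required $\exp(-\Omega(n^{1-2a}))$ straight away. This is the missing ingredient in your argument; once you have it, the rest of your bookkeeping goes through with $b$ chosen small relative to $b_1$ and $a$.
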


Note that (as we prove in \ref{sec:random-triangle-removal}), the
triangle removal process is likely to produce quasirandom graphs;
that is, $\Pr\left(\randS'\in\mathcal{Q}\right)\ge\Pr\left(\randS'\in\cordm{\varepsilon,h}{\alpha N}\right)=1-o\left(1\right)$.
However, as we will see in \ref{subsec:absorbers}, the conditioning
in \ref{lem:triangle-removal-transfer} can still be useful because
the probabilities under consideration are so small (it is certainly
not true that $\Pr\left(\randS'\notin\mathcal{Q}\right)$ is anywhere
near as small as $\exp\left(-\Omega\left(n^{2}\right)\right)$).

The proof of \ref{lem:triangle-removal-transfer} follows from a sequence
of several lemmas. The most important is the following: we can estimate
the number of ways to complete a partial system $S$, and show that
it does not vary too much between choices of $S$.
\begin{lem}
\label{lem:num-extensions}For an ordered partial system $S\in\ordm m$,
let $\ext S\subseteq\ord$ be the set of ordered Steiner triple systems
$S^{*}$ such that $S_{m}^{*}=S$. Fixing sufficiently large $h\in\NN$
and any $a>0$, there is $b=b\left(a,h\right)>0$ such that the following
holds. For any fixed $\alpha\in\left(0,1\right)$, any $\varepsilon=\varepsilon\left(n\right)\le n^{-a}$
and any $S,S'\in\cordm{\varepsilon,h}{\alpha N}$,
\[
\frac{\left|\ext S\right|}{\left|\ext{S'}\right|}\le\exp\left(O\left(n^{2-b}\right)\right).
\]
\end{lem}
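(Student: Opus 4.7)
I would prove \ref{lem:num-extensions} by showing that $|\ext S|$ equals, up to multiplicative error $\exp(O(n^{2-b}))$, a common quantity depending only on $n$ and $\alpha$; the stated ratio bound then follows by dividing. The natural candidate is $F(n,\alpha) := p_{0}^{-1}$, where
\[
p_{0} := \prod_{i=m}^{N-1} \frac{6}{d_i^{3}\, n^{3}}, \qquad d_i := 1 - \tfrac{3i}{\binom{n}{2}}, \qquad m := \alpha N.
\]
This is the ``predicted'' probability of generating any specific ordered completion of $S$ via the triangle removal process, assuming the process remains quasirandom throughout.

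For the lower bound I would run the triangle removal process started at the leave graph $G(S)$ (rather than at $K_n$) for $N - m$ steps, and let $\boldsymbol{T}$ be the resulting random ordered partial system. For any fixed $T \in \ext S$,
\[
\Pr(\boldsymbol{T} = T) = \prod_{i=m}^{N-1} \frac{1}{t_i(T_i)},
\]
where $t_i(T_i)$ is the number of triangles in $G(T_i)$. A tracking analysis of the process (deferred to \ref{sec:random-triangle-removal}) will show that with probability $1-o(1)$ every intermediate leave graph is $(n^{-a'},h)$-quasirandom with the expected density $d_i$, for some $a' > 0$ depending on $a$ and $h$. On this event each $t_i(T_i)$ equals $(1 \pm O(n^{-a'}))\, d_i^{3} n^{3}/6$ by the standard quasirandom triangle count, so $\Pr(\boldsymbol{T} = T) = p_{0} \cdot \exp(O(n^{2-b}))$ for any $b < a'$. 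Summing over those $T$ (call them \emph{nice}) gives $1 - o(1) \le |\{\text{nice } T\}| \cdot p_{0}\exp(O(n^{2-b}))$, and hence $|\ext S| \ge p_{0}^{-1}\exp(-O(n^{2-b}))$.

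For the matching upper bound I would use an entropy argument in the spirit of Radhakrishnan \cite{Rad97} and Linial--Luria \cite{LL13}. Taking $\boldsymbol{T}$ uniform in $\ext S$ and applying the chain rule,
\[
\log|\ext S| = H(\boldsymbol{T}) = \sum_{i=m}^{N-1} H(\boldsymbol{T}_{i+1} \mid \boldsymbol{T}_i),
\]
and $H(\boldsymbol{T}_{i+1}\mid \boldsymbol{T}_i) \le \log \mathbb{E}[t_i(\boldsymbol{T}_i)]$ (bounding the support of $\boldsymbol{T}_{i+1}$ given $\boldsymbol{T}_i$ by the triangle count of $G(\boldsymbol{T}_i)$, then applying Jensen's inequality to the concave logarithm). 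A direct calculation on the marginals of $\boldsymbol{T}_i$ (each edge of $G(S)$ is present in $G(\boldsymbol{T}_i)$ with probability $(N-i)/(N-m)$, and similarly for longer edge-configurations), combined with the quasirandomness of $G(S)$, yields $\mathbb{E}[t_i(\boldsymbol{T}_i)] \le (1 + O(n^{-a'}))\, d_i^{3} n^{3}/6$; summing over $i$ produces $\log|\ext S| \le \log(p_{0}^{-1}) + O(n^{2-b})$. The main obstacle is the tracking analysis: proving that, started from \emph{any} sufficiently quasirandom leave graph (rather than just from $K_n$), the triangle removal process maintains $(n^{-a'},h)$-quasirandomness and the predicted triangle count at each of its $\Theta(n^{2})$ steps, with probability $1-o(1)$. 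This is a standard but technical martingale-concentration analysis of a random greedy process in the style of Bohman and others, but now started from a non-trivial configuration; it is what is deferred to \ref{sec:random-triangle-removal}.
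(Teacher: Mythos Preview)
Your lower bound has a genuine gap. You run the triangle removal process from $G(S)$ for all $N-m$ steps, i.e.\ until a complete Steiner triple system is produced, and assert that with probability $1-o(1)$ every intermediate graph is quasirandom. But the tracking analysis of \ref{sec:random-triangle-removal} only controls the process down to density roughly $n^{-c}$ for some small $c>0$; once the density drops below any fixed power of $n$ the error terms swamp the main terms and there is no guarantee the process does not freeze. It is not known---and there is no reason to believe---that the triangle removal process reaches a full Steiner triple system with probability $1-o(1)$. The paper's proof is crucially different here: it stops the process at $M=(1-\varepsilon)N$ hyperedges (density $\varepsilon$ still polynomially large), counts those quasirandom partial systems, and then invokes Keevash's completion theorem (\ref{thm:keevash}) to certify that each of them extends to at least one Steiner triple system. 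Keevash's theorem is doing essential work that your argument omits; without it you have no lower bound on $|\ext S|$ at all.

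Your upper bound is a different (and simpler) entropy argument than the paper's, which instead randomly orders the \emph{edges} of $G(S)$ in the style of Linial--Luria. Your approach is basically sound but the stated estimate $\E[t_i(\boldsymbol{T}_i)]\le(1+O(n^{-a'}))\,d_i^3 n^3/6$ is false for $i$ close to $N$: as $d_i\to 0$ the right-hand side tends to $0$, yet $t_i\ge 1$ always. A direct computation (each triangle of $G(S)$ is covered by either one or three hyperedges of the completion, and exactly $N-m$ of them by one) gives $\E[t_i]=d_i^3 n^3/6+O(d_i n^2)$, which is only $(1+o(1))\,d_i^3 n^3/6$ while $d_i^2 n\to\infty$. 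This is easily fixed by truncating at $d_i=n^{-c}$ and bounding the remaining $O(n^{2-c})$ terms trivially by $O(\log n)$ each, but it needs to be said.
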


\ref{lem:num-extensions} can be proved with slight adaptations to
proofs of Keevash \cite{Kee15} and Linial and Luria \cite{LL13}
giving lower and upper bounds on the total number of Steiner triple
systems. The details are in \ref{sec:completion-count}.

The point of \ref{lem:num-extensions} is that if we can prove some
property holds with extremely high probability (say $1-\exp\left(-\Omega\left(n^{2}\right)\right)$)
in a uniformly random $\randS\in\cordm{\varepsilon,h}{\alpha N}$,
then it also holds with essentially the same probability in $\randS_{\alpha N}$,
for a uniformly random $\randS\in\ord$ conditioned on the event $\randS_{\alpha N}\in\cordm{\varepsilon,h}{\alpha N}$.
The next step is to show that the event $\randS_{\alpha N}\in\cordm{\varepsilon,h}{\alpha N}$
is very likely. In fact, this event occurs a.a.s. for a random ordering
of any given Steiner triple system. We prove the following lemma in
\ref{sec:randomly-ordered}.
\begin{lem}
\label{lem:random-is-typical}The following holds for any fixed $h\in\NN$,
$\alpha\in\left(0,1\right)$ and $a\in\left(0,1/2\right)$. Let $\varepsilon=n^{-a}$,
consider any Steiner triple system $S$, and uniformly at random order
its hyperedges to obtain an ordered Steiner triple system $\randS\in\ord$.
Then $\Pr\left(\randS_{\alpha N}\notin\cordm{\varepsilon,h}{\alpha N}\right)=\exp\left(-\Omega\left(n^{1-2a}\right)\right)$.
\end{lem}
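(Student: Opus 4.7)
Fix a Steiner triple system $S$ on $\range n$, let $N=\binom{n}{2}/3$, and view $\randS$ as a uniformly random ordering of its $N$ triples. For each $A\subseteq \range n$ with $k:=|A|\le h$ and each $i\le \alpha N$, set
\[
X_{A,i}=\Bigl|\bigcap_{w\in A}N_{G(\randS_i)}(w)\Bigr|,\qquad d_i=1-\frac{3i}{\binom{n}{2}},
\]
noting that $d_i$ is the \emph{deterministic} density of $G(\randS_i)$, since the triples of $S$ cover disjoint pair-sets. Since $\cordm{\varepsilon,h}{\alpha N}$ demands quasirandomness of every prefix, it suffices to show that for each fixed $(A,i)$,
\[
\Pr\bigl[\,X_{A,i}\ne (1\pm\varepsilon)\,d_i^{k}\,n\,\bigr]\le \exp\bigl(-\Omega(n^{1-2a})\bigr);
\]
a union bound over the $O(n^{h+2})$ pairs $(A,i)$ then absorbs into the same error, using $a<1/2$.

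\textbf{Expectation.} For $v\notin A$ let $\mathcal{T}_v$ be the set of triples of $S$ covering some pair $\{v,w\}$ with $w\in A$, and set $Y_v=\mathbf{1}[v\in\bigcap_{w\in A}N_{G(\randS_i)}(w)]$, so $X_{A,i}=\sum_v Y_v$ and $Y_v=1$ iff no triple of $\mathcal{T}_v$ lies in the first $i$ positions. Generically $|\mathcal{T}_v|=k$; the only exceptions are $v$'s lying in a triple of $S$ together with two vertices of $A$, of which there are only $O(1)$. A direct hypergeometric computation gives $\Pr[Y_v=1]=\binom{N-|\mathcal{T}_v|}{i}/\binom{N}{i}=d_i^{|\mathcal{T}_v|}(1+O(1/n))$, whence $\E X_{A,i}=(1+o(\varepsilon))\,n\,d_i^{k}$.

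\textbf{Concentration via a Bernoulli coupling.} The core step is proving concentration of $X_{A,i}$. It is convenient to pass to the Bernoulli model in which each triple is independently retained with probability $p=i/N$; the uniform-size-$i$ sample is just this Bernoulli model conditioned on sample size $i$. In the Bernoulli model, $Y_v$ and $Y_{v'}$ are functions of disjoint coordinate-sets whenever $\mathcal{T}_v\cap\mathcal{T}_{v'}=\emptyset$. This intersection is non-empty only if $v'$ is the third vertex of some triple of $S$ containing $v$ and an element of $A$; hence the dependency graph on $\range n\setminus A$ has maximum degree at most $k$, and admits a proper $(k+1)$-colouring by greedy colouring. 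Within each colour class $C$ the $Y_v$'s are jointly independent bounded variables, so Chernoff gives
\[
\Pr\Bigl[\,\Bigl|\textstyle\sum_{v\in C}Y_v-\E\sum_{v\in C}Y_v\Bigr|>\frac{\varepsilon n}{2(k+1)}\,\Bigr]\le \exp\bigl(-\Omega(\varepsilon^2 n)\bigr)=\exp\bigl(-\Omega(n^{1-2a})\bigr).
\]
Summing the deviations across the $k+1\le h+1$ colour classes, the Bernoulli version of $X_{A,i}$ concentrates around its mean to within $\varepsilon n/2$ at the required rate. Transferring back to the uniform ordering uses $\Pr_H[\cdot]\le \Pr_B[\cdot]/\Pr_B[|S|=i]$ with $\Pr_B[\Bin(N,p)=i]=\Theta(1/\sqrt N)$; the resulting $O(n)$ loss in the tail is absorbed into $\exp(-\Omega(n^{1-2a}))$. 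A short calculation confirms $|\E_B X_{A,i}-\E_H X_{A,i}|=O(1)\ll\varepsilon n$, so centring at $\E_H X_{A,i}$ instead is immediate.

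\textbf{Main obstacle.} The subtlety is extracting a genuine exponential tail at rate $\exp(-\Omega(n^{1-2a}))$ for a \emph{nonlinear} function of a uniform random subset of triples: a naive Lipschitz/Azuma bound only yields $\exp(-\Omega(n^{-2a}))$, which is useless. The structural observation that the $Y_v$'s have bounded-degree dependency in the Bernoulli picture is exactly what is needed to reduce the problem to classical Chernoff on $O(h)$ genuinely independent sums; everything else (expectation, Bernoulli-to-hypergeometric transfer, and the union bound over $(A,i)$) is routine.
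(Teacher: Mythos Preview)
Your proof is correct and follows the same overall architecture as the paper: pass to the Bernoulli model where each triple of $S$ is retained with probability $i/N$, compute the expectation of the common-neighbourhood count, prove concentration, transfer back to the uniform-size-$i$ model (the paper cites this as the ``Pittel inequality'', which is exactly your $\Pr_H[\cdot]\le\Pr_B[\cdot]/\Pr_B[\text{size}=i]$ step), and union-bound over all $(A,i)$.

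The one genuine difference is the concentration step. You partition the indicators $Y_v$ into $k+1$ colour classes using a bounded-degree dependency graph, then apply Chernoff within each class. The paper instead observes directly that the count $\sum_v Y_v$ (after discarding the $O(1)$ exceptional vertices $v$ lying in a triple with two elements of $A$) is a function of only the $O(n)$ triples of $S$ that contain a vertex of $A$, with Lipschitz constant $2$ in each such coordinate; Azuma--Hoeffding on these $O(n)$ coordinates then gives the $\exp(-\Omega(n^{1-2a}))$ bound in one line. So your remark that ``a naive Lipschitz/Azuma bound only yields $\exp(-\Omega(n^{-2a}))$'' is true only if one naively exposes all $N=\Theta(n^2)$ coordinates; restricting to the $O(n)$ relevant ones is exactly what makes Azuma work, and is arguably simpler than your colouring argument. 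Both routes are perfectly valid and yield the same bound.
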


The upshot of \ref{lem:num-extensions,lem:random-is-typical} is that
if we can prove a property holds with extremely high probability in
a uniformly random $\randS\in\cordm{\varepsilon,h}{\alpha N}$ for
sufficiently small $\varepsilon$ and sufficiently large $h$, then
that property also holds a.a.s. in the first $\alpha N$ hyperedges
of a uniformly random $\randS\in\ord$.

Next, the following lemma says that each $S\in\cordm{\varepsilon,h}{\alpha N}$
is roughly equally likely to be produced by the triangle removal process,
so that $\RR\left(n,\alpha N\right)$ approximates the uniform distribution
on $\cordm{\varepsilon,h}{\alpha N}$. It is proved in \ref{sec:greedy-random-approx-uniform}.
\begin{lem}
\label{lem:greedy-random-approx-uniform}The following holds for any
fixed $a\in\left(0,2\right)$ and $\alpha\in\left[0,1\right]$. Let
$\varepsilon=n^{-a}$, let $S,S'\in\cordm{\varepsilon,2}{\alpha N}$
and let $\randS\sim\rg n{\alpha N}$. Then
\[
\frac{\Pr\left(\randS=S\right)}{\Pr\left(\randS=S'\right)}\le\exp\left(O\left(n^{2-a}\right)\right).
\]
\end{lem}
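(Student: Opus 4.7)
The plan is to write $\Pr(\randS = S)$ explicitly as a product of reciprocals of triangle counts along the sequence $G(S_0), G(S_1), \ldots, G(S_{\alpha N - 1})$, and then use the $(\varepsilon, 2)$-quasirandomness of every prefix to show that these triangle counts depend (up to a multiplicative factor of $1 \pm O(\varepsilon)$) only on the density of the host graph. Because $S$ and $S'$ remove the same number of edges at each step, the leading-order triangle counts cancel in the ratio, and accumulating the per-step errors over $\alpha N$ steps gives the claimed bound.

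Concretely, at step $i$ of the triangle removal process the next triangle is picked uniformly from the $T(G(S_{i-1}))$ triangles of the current graph (where $T(H)$ denotes the number of triangles in $H$), so
\[
\frac{\Pr(\randS = S)}{\Pr(\randS = S')} = \prod_{i=1}^{\alpha N} \frac{T(G(S'_{i-1}))}{T(G(S_{i-1}))}.
\]
A standard double count yields $3\, T(H) = \sum_{uv \in E(H)} |N_H(u) \cap N_H(v)|$ for any graph $H$, and if $H$ is $(\varepsilon, 2)$-quasirandom with density $d = d(H)$, then each inner term equals $(1 \pm \varepsilon) d^2 n$, whence $T(H) = (1 \pm \varepsilon)\binom{n}{2} d^3 n/3$. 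Since both $G(S_{i-1})$ and $G(S'_{i-1})$ have exactly $\binom{n}{2} - 3(i-1)$ edges, their densities coincide at every step; the dominant factor $\binom{n}{2} d^3 n/3$ therefore cancels, leaving a ratio of at most $(1 + \varepsilon)/(1 - \varepsilon) = 1 + O(\varepsilon)$ per step.

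Taking logarithms and summing over the $\alpha N = O(n^2)$ steps then yields a total error of $O(\varepsilon n^2) = O(n^{2-a})$, giving the desired $\exp(O(n^{2-a}))$ bound. I expect no serious obstacle: the hypothesis $S, S' \in \cordm{\varepsilon, 2}{\alpha N}$ supplies $(\varepsilon, 2)$-quasirandomness at every prefix, and the restriction $a > 0$ ensures $\varepsilon = o(1)$ so that $\log\bigl((1+\varepsilon)/(1-\varepsilon)\bigr) = O(\varepsilon)$ is valid. The only subtle point worth checking is that the argument remains sound as $\alpha \to 1$, where the density $d_i$ shrinks toward the end of the process; however, quasirandomness provides purely multiplicative control on $T(H)$, so the triangle-counting estimate is unaffected.
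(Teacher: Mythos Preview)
Your proposal is correct and follows essentially the same approach as the paper: write $\Pr(\randS=S)$ as a product of reciprocal triangle counts, use quasirandomness to estimate each triangle count as $(1\pm O(\varepsilon))$ times a quantity depending only on the density, and accumulate the $(1+O(\varepsilon))$ per-step ratios over $\alpha N=O(n^{2})$ steps. The only cosmetic difference is that the paper invokes its general extension-counting lemma (Proposition~2.6) to estimate the number of triangles, whereas you obtain the same estimate directly via the identity $3T(H)=\sum_{uv\in E(H)}|N_H(u)\cap N_H(v)|$; these amount to the same computation.
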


We can finally combine everything to prove \ref{lem:triangle-removal-transfer}.
\begin{proof}[Proof of \ref{lem:triangle-removal-transfer}]
First, note that we can assume $h$ is large enough for \ref{lem:num-extensions},
because if we increase $h$ we still have $\mathcal{Q}\supseteq\cordm{\varepsilon,h}{\alpha N}$.
Let $\randS''\in\cordm{\varepsilon,h}{\alpha N}$ be a uniformly random
partial system in $\cordm{\varepsilon,h}{\alpha N}$. By \ref{lem:greedy-random-approx-uniform},
we have
\[
\Pr\left(\randS'\notin\mathcal{P}\cond\randS'\in\cordm{\varepsilon,h}{\alpha N}\right)=\exp\left(O\left(n^{2-a}\right)\right)\Pr\left(\randS''\notin\mathcal{P}\right).
\]
(Recalling the definition of big-oh notation, we emphasise that this
formula encapsulates a lower bound as well as an upper bound). Next,
let $c=b\left(a,h\right)$ in the notation of \ref{lem:num-extensions}.
We similarly have
\[
\Pr\left(\randS_{\alpha N}\notin\mathcal{P}\cond\randS_{\alpha N}\in\cordm{\varepsilon,h}{\alpha N}\right)=\exp\left(O\left(n^{2-c}\right)\right)\Pr\left(\randS''\notin\mathcal{P}\right).
\]
Using \ref{lem:random-is-typical}, it follows that
\begin{align*}
\Pr\left(\randS_{\alpha N}\notin\mathcal{P}\right) & \le\Pr\left(\randS_{\alpha N}\notin\mathcal{P}\cond\randS_{\alpha N}\in\cordm{\varepsilon,h}{\alpha N}\right)+\Pr\left(\randS_{\alpha N}\notin\cordm{\varepsilon,h}{\alpha N}\right)\\
 & \le\exp\left(O\left(n^{2-c}\right)\right)\exp\left(O\left(n^{2-a}\right)\right)\Pr\left(\randS'\notin\mathcal{P}\cond\randS'\in\cordm{\varepsilon,h}{\alpha N}\right)+\exp\left(-\Omega\left(n^{1-2a}\right)\right).
\end{align*}
But, if $a$ is small enough then $\Pr\left(\randS'\in\mathcal{Q}\right)\ge\Pr\left(\randS'\in\cordm{\varepsilon,h}{\alpha N}\right)=1-o\left(1\right)$,
by a standard analysis of the triangle removal process; see \ref{lem:triangle-removal-analysis}.
So,
\begin{align*}
\Pr\left(\randS'\notin\mathcal{P}\cond\randS'\in\cordm{\varepsilon,h}{\alpha N}\right) & =\frac{\Pr\left(\randS'\notin\mathcal{P}\text{ and }\randS'\in\cordm{\varepsilon,h}{\alpha N}\right)}{\Pr\left(\randS'\in\cordm{\varepsilon,h}{\alpha N}\right)}\\
 & \le\frac{\Pr\left(\randS'\notin\mathcal{P}\text{ and }\randS'\in\mathcal{Q}\right)}{\Pr\left(\randS'\in\cordm{\varepsilon,h}{\alpha N}\right)}\\
 & =\left(1+o\left(1\right)\right)\frac{\Pr\left(\randS'\notin\mathcal{P}\text{ and }\randS'\in\mathcal{Q}\right)}{\Pr\left(\randS'\in\mathcal{Q}\right)}\\
 & =\left(1+o\left(1\right)\right)\Pr\left(\randS'\notin\mathcal{P}\cond\randS'\in\mathcal{Q}\right).
\end{align*}
Choosing $b$ such that $b<\min\left\{ c,a\right\} $ and $2-b\ge1-2a$,
we then have
\[
\Pr\left(\randS_{\alpha N}\notin\mathcal{P}\right)\le\exp\left(-\Omega\left(n^{1-2a}\right)\right)
\]
as desired.
\end{proof}
In \ref{sec:randomly-ordered} we prove \ref{lem:random-is-typical}
and in \ref{sec:greedy-random-approx-uniform} we prove \ref{lem:greedy-random-approx-uniform}.
Also, in \ref{sec:nibble-TRP-coupling} we prove some lemmas which
are useful tools for applying \ref{lem:triangle-removal-transfer}
in practice.

\subsection{Randomly ordered Steiner triple systems\label{sec:randomly-ordered}}

In this subsection we prove \ref{lem:random-is-typical}.
\begin{proof}
Consider $m\le\alpha N$. Note that $\randS_{m}$ (as an unordered
partial system) is a uniformly random subset of $m$ hyperedges of
$S$. Also note that 
\[
d\left(G\left(\randS_{m}\right)\right)=\frac{\binom{n}{2}-3m}{\binom{n}{2}}=1-\frac{m}{N}.
\]
We can obtain a random partial system almost equivalent to $\randS_{m}$
by including each hyperedge of $S$ with independent probability $m/N$.
Let $\randS'$ denote the partial system so obtained, and let $\randG'=G\left(\randS'\right)$.
Now, fix a set $A$ of at most $h$ vertices. It suffices to prove
\begin{align}
\left|\bigcap_{w\in A}N_{\randG'}\left(w\right)\right| & =\left(1\pm n^{-a}\right)\left(1-\frac{m}{N}\right)^{\left|A\right|}n,\label{eq:degree-random-binomial}
\end{align}
with probability $1-\exp\left(-\Omega\left(n^{1-2a}\right)\right)$.
Indeed, the so-called Pittel inequality (see \cite[p.~17]{JLR00})
would imply that the same estimate holds with essentially the same
probability if we replace $\randS'$ with $\randS_{m}$ (thereby replacing
$\randG'$ with $G\left(\randS_{m}\right)$). We would then be able
to finish the proof by applying the union bound over all $m\le\alpha N$
and all choices of $A$.

Note that there are at most ${\left|A\right| \choose 2}=O\left(1\right)$
hyperedges of $S$ that include more than one vertex in $A$ (by the
defining property of a Steiner triple system). Let $U$ be the set
of vertices involved in these atypical hyperedges, plus the vertices
in $A$, so that $\left|U\right|=O\left(1\right)$. Let $\randN=\left|\left(\bigcap_{w\in A}N_{\randG'}\left(w\right)\right)\backslash U\right|$.
For every $v\notin U$ and $w\in A$ there is exactly one hyperedge
$e_{v}^{w}$ in $S$ containing $v$ and $w$, whose presence in $\randS'$
would prevent $v$ from contributing to $\randN$. For each fixed
$v\notin U$ the hyperedges $e_{v}^{w}$, for $w\in A$, are distinct,
so
\[
\Pr\left(v\in\bigcap_{w\in A}N_{\randG'}\left(w\right)\right)=\left(1-\frac{m}{N}\right)^{\left|A\right|},
\]
and by linearity of expectation $\E\randN=\left(1-m/N\right)^{\left|A\right|}\left(n-O\left(1\right)\right)$.
Now, $\randN$ is determined by the presence of at most $\left(n-\left|U\right|\right)\left|A\right|=O\left(n\right)$
hyperedges in $\randS'$, and changing the presence of each affects
$\randN$ by at most $2=O\left(1\right)$. So, by the Azuma\textendash Hoeffding
inequality (see \cite[Section~2.4]{JLR00}),
\begin{align*}
\Pr\left(\left|\randN-\left(1-\frac{m}{N}\right)^{\left|A\right|}n\right|>n^{-a}\left(1-\frac{m}{N}\right)^{\left|A\right|}n-\left|U\right|\right) & \le\exp\left(-\Omega\left(\frac{\left(n^{-a}\left(1-\alpha\right)^{h}n\right)^{2}}{n}\right)\right)\\
 & =\exp\left(-\Omega\left(n^{1-2a}\right)\right).
\end{align*}
Finally, we recall that $\left|\left(\bigcap_{w\in A}N_{\randG'}\left(w\right)\right)\right|=\randN\pm\left|U\right|$,
which completes the proof of \ref{eq:degree-random-binomial}.
\end{proof}

\subsection{Approximate uniformity of the triangle removal process\label{sec:greedy-random-approx-uniform}}

In this subsection we prove \ref{lem:greedy-random-approx-uniform}.
We first make the simple observation that subgraph statistics in a
quasirandom graph $G$ can be easily estimated in terms of the density
of $G$. In fact we prove a more general extension lemma about ``rooted''
subgraphs, which we will also use later in the paper. An \emph{embedding}
of a graph $H$ in a graph $G$ is a surjective homomorphism from
$H$ into $G$.
\begin{prop}
\label{prop:H-count}Let $H$ be a fixed graph with identified vertices
$u_{1},\dots,u_{k}$. Let $G$ be an $\left(\varepsilon,v\left(H\right)-1\right)$-quasirandom
graph on $n$ vertices, and let $\phi:\left\{ u_{1},\dots,u_{k}\right\} \to G$
be an embedding of $F=H\left[\left\{ u_{1},\dots,u_{k}\right\} \right]$
in $G$. Then, the number of ways to extend $\phi$ to an embedding
of $H$ in $G$ is 
\[
\left(1\pm O\left(\varepsilon\right)\right)d\left(G\right)^{e\left(H\right)-e\left(F\right)}n^{v\left(H\right)-k}.
\]
In particular, taking $k=0$, the number of copies of $H$ in $G$
is 
\[
\left(1\pm O\left(\varepsilon\right)\right)d\left(G\right)^{e\left(H\right)}\frac{n^{v\left(H\right)}}{\left|\operatorname{Aut}\left(H\right)\right|},
\]
where $\left|\operatorname{Aut}\left(H\right)\right|$ is the number
of automorphisms of $H$.
\end{prop}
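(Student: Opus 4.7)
The plan is to extend $\phi$ one vertex at a time, using the quasirandomness of $G$ at each step to count the number of valid images. Order the non-root vertices $V(H)\setminus\{u_1,\dots,u_k\}$ arbitrarily as $v_1,\dots,v_{v(H)-k}$, and for each $j$ let $A_j\subseteq\{u_1,\dots,u_k,v_1,\dots,v_{j-1}\}$ be the set of $H$-neighbours of $v_j$ among the previously processed vertices. Any extension of $\phi$ to an embedding of $H$ in $G$ corresponds to choosing, for each $j$ in turn, a vertex $\phi(v_j)\in V(G)$ that is adjacent in $G$ to $\phi(w)$ for every $w\in A_j$, and is distinct from all of $\phi(u_1),\dots,\phi(u_k),\phi(v_1),\dots,\phi(v_{j-1})$.

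Given $\phi(v_1),\dots,\phi(v_{j-1})$, the set $\bigcap_{w\in A_j}N_G(\phi(w))$ has size $(1\pm\varepsilon)d(G)^{|A_j|}n$ by $(\varepsilon,v(H)-1)$-quasirandomness, since $|A_j|\le v(H)-1$. Removing the at most $v(H)-1=O(1)$ already-used vertices perturbs this count by an additive $O(1)$, which is absorbed into the relative error provided $d(G)^{v(H)-1}n$ is sufficiently large (this is implicit in the $\varepsilon$-quasirandomness being a meaningful statement, and is certainly satisfied in all intended applications). Thus the number of admissible images for $v_j$ is $(1\pm O(\varepsilon))d(G)^{|A_j|}n$.

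Multiplying these estimates over $j=1,\dots,v(H)-k$ and observing that $\sum_j|A_j|$ counts each edge of $H$ that has at least one endpoint outside $\{u_1,\dots,u_k\}$ exactly once, i.e.\ $\sum_j|A_j|=e(H)-e(F)$, yields
\[
\prod_{j=1}^{v(H)-k}\bigl(1\pm O(\varepsilon)\bigr)d(G)^{|A_j|}n=\bigl(1\pm O(\varepsilon)\bigr)d(G)^{e(H)-e(F)}n^{v(H)-k},
\]
where the constant $v(H)-k=O(1)$ guarantees that $(1\pm O(\varepsilon))^{v(H)-k}=1\pm O(\varepsilon)$. This proves the main assertion. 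For the $k=0$ specialisation, the quantity just computed counts ordered tuples of distinct vertices spanning a copy of $H$; each unlabelled copy is counted exactly $|\operatorname{Aut}(H)|$ times, so dividing gives the stated formula.

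The argument is essentially bookkeeping, and there is no serious obstacle; the only point that requires a moment's care is verifying that excluding the $O(1)$ previously used vertices does not spoil the multiplicative error, which is why one needs $d(G)^{v(H)-1}n$ to be large compared to $1/\varepsilon$ — a condition one should view as implicit in the quasirandomness hypothesis.
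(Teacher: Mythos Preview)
Your proof is correct and is essentially the same as the paper's: both extend $\phi$ one vertex at a time (you phrase it iteratively, the paper phrases it as induction on $|V(H)\setminus U|$) and use $(\varepsilon,v(H)-1)$-quasirandomness to count the choices at each step, then multiply. You are in fact slightly more careful than the paper about the additive $O(1)$ error from excluding previously used vertices, which the paper silently absorbs.
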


\begin{proof}
Let $U=\left\{ u_{1},\dots,u_{k}\right\} $; we proceed by induction
on the number of vertices in $V\left(H\right)\backslash U$. The base
case is where $U=V\left(H\right)$, which is trivial. Suppose there
is a vertex $v\in V\left(H\right)\backslash U$; by induction there
are
\[
\left(1\pm O\left(\varepsilon\right)\right)d\left(G\right)^{e\left(H\right)-e\left(F\right)-\deg_{H}\left(v\right)}n^{v\left(H\right)-\left|U\right|-1}
\]
embeddings of $H-v$ extending $\phi$. For each such embedding, by
$\left(\varepsilon,v\left(H\right)-1\right)$-quasirandomness, there
are $\left(1\pm\varepsilon\right)d\left(G\right)^{\deg_{H}\left(v\right)}n$
ways to choose a vertex of $G$ with the right adjacencies to complete
the embedding of $H$. The desired result follows.
\end{proof}
Now we are ready to prove \ref{lem:greedy-random-approx-uniform}.
\begin{proof}
Each $G\left(S_{i}\right)$ has 
\[
\left(1\pm O\left(n^{-a}\right)\right)\left(1-\frac{i}{N}\right)^{3}\frac{n^{3}}{6}
\]
triangles, by $\left(n^{-a},2\right)$-quasirandomness and \ref{prop:H-count}.
We therefore have
\[
\Pr\left(\randS=S\right)=\prod_{i=0}^{\alpha N-1}\frac{1}{\left(1\pm O\left(n^{-a}\right)\right)\left(1-i/N\right)^{3}n^{3}/6},
\]
and a similar expression holds for $\Pr\left(\randS=S'\right)$. Taking
quotients term-by-term gives
\begin{align*}
\frac{\Pr\left(\randS=S\right)}{\Pr\left(\randS=S'\right)} & \le\left(1+O\left(n^{-a}\right)\right)^{\alpha N}\\
 & \le\exp\left(O\left(n^{2-a}\right)\right)
\end{align*}
as desired.
\end{proof}

\subsection{A coupling lemma and a concentration inequality\label{sec:nibble-TRP-coupling}}

In this subsection we prove two lemmas that will be useful in combination
with \ref{lem:triangle-removal-transfer}. First, after some definitions
we will show how to couple the triangle removal process with a simpler
random hypergraph distribution.
\begin{defn}
For a partial system $S$, let $\Gnp Sp$ be the random distribution
on $3$-uniform hypergraphs where each hyperedge not conflicting with
$S$ (that is, not intersecting a hyperedge of $S$ in more than $2$
vertices) is included with probability $p$. So, if $\varnothing$
is the empty order-$n$ partial system, then $\Gnp{\varnothing}p=:\Gnp np$
is the standard binomial random 3-uniform hypergraph. Let $\GSnp Sp$
be the distribution on partial systems obtained from $\Gnp Sp$ by
considering all hyperedges which intersect another hyperedge in more
than $2$ vertices, and deleting all these hyperedges (and let $\GSnp np=\GSnp{\varnothing}p$).
Let $\RR\left(S,m\right)$ be the partial system distribution obtained
with $m$ steps of the triangle removal process starting from $G\left(S\right)$.
\end{defn}

Note that ${n \choose 3}/n=\left(1+o\left(1\right)\right)N$. For
small $\alpha>0$, we can view $\GSnp S{\alpha/n}$ as a ``bite''
of a ``nibbling'' process, that should be comparable to $\rg S{\alpha N}$.
\begin{lem}
\label{lem:bite-transfer}Let $\mathcal{P}$ be a property of unordered
partial systems that is monotone increasing in the sense that $S\in\mathcal{P}$
and $S'\supseteq S$ implies $S'\in\mathcal{P}$. Fix $\alpha\in\left(0,1\right)$
and $S\in\ordm m$ for some $m\le N-\alpha N$. Let $\randS\sim\rg S{\alpha N}$
and $\randS^{*}\sim\GSnp S{\alpha/n}$. Then
\[
\Pr\left(S\cup\randS\notin\mathcal{P}\right)=O\left(1\right)\Pr\left(S\cup\randS^{*}\notin\mathcal{P}\right),
\]
where $S\cup S'$ denotes the unordered partial system containing
all the edges of $S$ and of $S'$.
\end{lem}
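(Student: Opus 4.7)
The plan is to couple the triangle-removal process $\rg{S}{\alpha N}$ with $\GSnp{S}{\alpha/n}$ via a continuous-time version of triangle removal, and then resolve the correlation between two resulting events with a conditional-expectation trick. Assign each triple $T$ not conflicting with $S$ an independent $\Unif[0,1]$ label $X_{T}$, and process triples in increasing order of labels, adding $T$ to the growing partial system iff $T$ is still a triangle when processed (i.e., no previously added triple shares an edge with $T$). Exchangeability of the labels makes the next-to-be-processed triple uniform over those remaining, so the resulting sequence of additions is distributed as the triangle-removal process, and in particular the first $\alpha N$ additions form an $\randS\in\rg{S}{\alpha N}$.

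Setting $p=\alpha/n$, the triples with label $\le p$ are distributed as $\Gnp{S}{p}$, and the subset of those with no conflicting partner of label $\le p$ is exactly $\randS^{*}=\GSnp{S}{p}$. Every triple in $\randS^{*}$ is added by time $p$ in the continuous process, because any previously added triple has label $<p$ and therefore lies in $\Gnp{S}{p}$, so it cannot share an edge with a triple of $\randS^{*}$. Writing $\mathcal{R}_{p}$ for the set of triples added by time $p$ and $\tau$ for the time of the $\alpha N$-th addition, we thus have $\randS^{*}\subseteq\mathcal{R}_{p}$, and on the event $E:=\{\tau\ge p\}=\{|\mathcal{R}_{p}|\le\alpha N\}$ all of $\mathcal{R}_{p}$ lies within the first $\alpha N$ additions, so $\randS^{*}\subseteq\randS$. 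Monotonicity of $\mathcal{P}$ then yields $\{S\cup\randS\notin\mathcal{P}\}\cap E\subseteq\{S\cup\randS^{*}\notin\mathcal{P}\}$.

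The main technical point is to show that $\Pr(E\mid\sigma(\randS))\ge 1/2-o(1)$ almost surely, which decouples $E$ from the bad event. Condition on the finer $\sigma$-algebra generated by the full relative ordering $\pi$ of the $T_{0}:=|\{\text{triangles of }G(S)\}|$ labels. Given $\pi$, the sequence $\randS$ is determined as is the position $i_{\alpha N}\ge\alpha N$ of the $\alpha N$-th added triple in $\pi$, while $\tau$ is the $i_{\alpha N}$-th order statistic of $T_{0}$ i.i.d.\ $\Unif[0,1]$ variables, hence Beta-distributed with parameters $(i_{\alpha N},\,T_{0}-i_{\alpha N}+1)$ and mean $i_{\alpha N}/(T_{0}+1)$. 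Combined with the trivial bound $T_{0}\le\binom{n}{3}$,
\[
\frac{i_{\alpha N}}{T_{0}+1}\;\ge\;\frac{\alpha N}{\binom{n}{3}+1}\;=\;\frac{\alpha}{n-2}\;>\;\frac{\alpha}{n}\;=\;p,
\]
with a gap of order $\alpha/n^{2}$ comfortably dominating the $O(1/T_{0})=O(1/n^{3})$ mean--median difference of this Beta distribution. Hence $p$ lies below the median, so $\Pr(\tau\ge p\mid\pi)\ge 1/2-o(1)$, and the tower property yields $\Pr(E\mid\randS)\ge 1/2-o(1)$.

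To finish, since $\one[S\cup\randS\notin\mathcal{P}]$ is $\sigma(\randS)$-measurable,
\[
\Pr(E,\,S\cup\randS\notin\mathcal{P})\;=\;\E\bigl[\one[S\cup\randS\notin\mathcal{P}]\cdot\Pr(E\mid\randS)\bigr]\;\ge\;(1/2-o(1))\Pr(S\cup\randS\notin\mathcal{P}),
\]
while the event inclusion above gives $\Pr(E,\,S\cup\randS\notin\mathcal{P})\le\Pr(S\cup\randS^{*}\notin\mathcal{P})$; combining these bounds yields $\Pr(S\cup\randS\notin\mathcal{P})\le(2+o(1))\Pr(S\cup\randS^{*}\notin\mathcal{P})$, as desired. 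The principal obstacle is the dependence between $E$ and $\{S\cup\randS\notin\mathcal{P}\}$ (both being functions of the same labels); the Beta-order-statistic argument handles this cleanly by passing to the finer $\sigma$-algebra $\sigma(\pi)$. The negligible probability that the triangle-removal process aborts before reaching $\alpha N$ additions (for fixed $\alpha<1$) is controlled by standard analysis of the process and does not affect the estimate.
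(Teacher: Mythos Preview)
Your coupling is exactly the one the paper uses: assign i.i.d.\ uniform labels to the triangles of $G(S)$, process in increasing order, and observe that the triangles with label at most $p=\alpha/n$ form $\randG\in\Gnp{S}{\alpha/n}$ while those among them with no conflicting partner in $\randG$ form $\randS^{*}$ and are necessarily accepted. The point of divergence is how you handle the dependence. The paper conditions on the simpler event $E'=\{\randM\le\alpha N\}$ where $\randM=|\randG|$; since $\randM$ depends only on the \emph{values} of the labels and $\randS$ only on their \emph{ordering}, $E'$ is independent of $\randS$, and $\Pr(E')\ge 1/2$ because the binomial mean $T_{0}p\le\binom{n}{3}\alpha/n<\alpha N$ lies below $\alpha N$. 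On $E'$ all of $\randG$ is processed before the $\alpha N$-th acceptance, so $\randS^{*}\subseteq\randS$; the conclusion is then a two-line computation. Your event $E=\{\tau\ge p\}$ is not independent of $\randS$, so you compensate by conditioning on $\pi$ and invoking the Beta law of $\tau$. This is correct but unnecessarily elaborate: since (given $\pi$) $\{\tau\ge p\}=\{\randM<i_{\alpha N}\}$, and $\randM$ is independent of $\pi$ with mean below $\alpha N\le i_{\alpha N}$, you are really just re-deriving the paper's independence observation through the back door.

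One genuine slip: you assert $O(1/T_{0})=O(1/n^{3})$, which presumes $T_{0}=\Omega(n^{3})$. The lemma carries no quasirandomness hypothesis on $S$, so $G(S)$ could in principle have far fewer triangles. This does not actually break your argument, because what you need is $(\alpha N-O(1))/(T_{0}+1)\ge\alpha/n$, i.e.\ $T_{0}\le nN-O(n)$, and this follows from the trivial bound $T_{0}\le\binom{n}{3}=(n-2)N$ regardless of the size of $T_{0}$; but the sentence as written is incorrect. Your final remark about the process aborting is also unnecessary: under the paper's convention $*\in\mathcal{P}$, abortion lies outside $\{S\cup\randS\notin\mathcal{P}\}$ and costs nothing.
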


Before proving \ref{lem:bite-transfer}, we remark that we abuse notation
slightly and use the conventions that $*$ is a superset of every
partial system, that $*\in\mathcal{P}$, and that $S\cup*=*$. That
is, if the triangle removal process aborts before reaching $\alpha N$
edges, we still say that it satisfies $\mathcal{P}$.
\begin{proof}
Let $\randS^{*}\sim\GSnp S{\alpha/n}$ be obtained from $\randG\sim\Gnp S{\alpha/n}$.
If $e\left(\randG\right)\le\alpha N$, then $\randS^{*}$ can be coupled
as a subset of $\randS$. Indeed, a random ordering of the edges of
$\randG$ can be viewed as the first few elements of a random ordering
of the set of triangles of $G\left(S\right)$, and the triangle removal
process with this ordering produces a superset of $\randS^{*}$. It
follows that
\[
\Pr\left(S\cup\randS\notin\mathcal{P}\right)\le\Pr\left(S\cup\randS^{*}\notin\mathcal{P}\cond e\left(\randG\right)\le\alpha N\right).
\]
Next, note that $e\left(\randG\right)$ has a binomial distribution
with mean $\E e\left(\randG\right)={n \choose 3}\alpha/n\le\alpha N$,
so it is easy to see that $\Pr\left(e\left(\randG\right)\le\alpha N\right)=\Omega\left(1\right)$.
It follows that
\begin{align*}
\Pr\left(S\cup\randS\notin\mathcal{P}\right) & \le\Pr\left(S\cup\randS^{*}\notin\mathcal{P}\right)/\Pr\left(e\left(\randG\right)\le\alpha N\right)=O\left(1\right)\Pr\left(S\cup\randS^{*}\notin\mathcal{P}\right).\tag*{\qedhere}
\end{align*}
\end{proof}
We remark that \ref{lem:bite-transfer} is similar in spirit to \cite[Lemma~6.1]{War14}.
In this subsection we also state and prove a bounded-differences inequality
with Bernstein-type tails which can be used to analyse $\GSnp S{\alpha/n}$.
Standard bounded-difference inequalities such as the Azuma\textendash Hoeffding
inequality do not provide strong enough tail bounds to apply \ref{lem:triangle-removal-transfer}.
\begin{thm}
\label{thm:bernstein-type}Let $\boldsymbol{\omega}=\left(\boldsymbol{\omega}_{1},\dots,\boldsymbol{\omega}_{n}\right)$
be a sequence of independent, identically distributed random variables
with $\Pr\left(\boldsymbol{\omega}_{i}=1\right)=p$ and $\Pr\left(\boldsymbol{\omega}_{i}=0\right)=1-p$.
Let $f:\left\{ 0,1\right\} ^{n}\to\RR$ satisfy the Lipschitz condition
$\left|f\left(\boldsymbol{\omega}\right)-f\left(\boldsymbol{\omega}'\right)\right|\le K$
for all pairs $\boldsymbol{\omega},\boldsymbol{\omega}'\in\left\{ 0,1\right\} ^{n}$
differing in exactly one coordinate. Then
\[
\Pr\left(\left|f\left(\boldsymbol{\omega}\right)-\E f\left(\boldsymbol{\omega}\right)\right|>t\right)\le\exp\left(-\frac{t^{2}}{4K^{2}np+2Kt}\right).
\]
\end{thm}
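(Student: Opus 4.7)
The plan is to pass to the Doob martingale associated with $f$ and apply a Bernstein-type martingale concentration inequality. Set $X_i = \E[f(\boldsymbol{\omega}) \mid \boldsymbol{\omega}_1, \dots, \boldsymbol{\omega}_i]$ and $\mathcal{F}_i = \sigma(\boldsymbol{\omega}_1, \dots, \boldsymbol{\omega}_i)$ for $0 \le i \le n$, so that $X_0 = \E f(\boldsymbol{\omega})$, $X_n = f(\boldsymbol{\omega})$, and the quantity of interest is $X_n - X_0$. A standard coupling argument using the Lipschitz hypothesis gives the bounded-increment bound $|X_i - X_{i-1}| \le K$ almost surely, since the two conditional distributions defining $X_i$ and $X_{i-1}$ can be coupled to agree outside the $i$-th coordinate.

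The step that produces the Bernstein-type improvement over Azuma--Hoeffding, replacing $K^2 n$ by $K^2 np$ in the denominator, is a sharp bound on the conditional variance of each martingale increment exploiting the fact that the coordinates are Bernoulli$(p)$. Write $Y_i = \E[f(\boldsymbol{\omega}) \mid \mathcal{F}_{i-1}, \boldsymbol{\omega}_i = 1]$ and $Z_i = \E[f(\boldsymbol{\omega}) \mid \mathcal{F}_{i-1}, \boldsymbol{\omega}_i = 0]$; the Lipschitz hypothesis (after averaging over the remaining coordinates) yields $|Y_i - Z_i| \le K$. Because $\boldsymbol{\omega}_i$ is Bernoulli$(p)$ and independent of $\mathcal{F}_{i-1}$, conditional on $\mathcal{F}_{i-1}$ the increment $X_i - X_{i-1}$ takes the value $(1-p)(Y_i - Z_i)$ with probability $p$ and $-p(Y_i - Z_i)$ with probability $1-p$. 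This gives
\[
\E\left[(X_i - X_{i-1})^2 \,\middle|\, \mathcal{F}_{i-1}\right] = p(1-p)(Y_i - Z_i)^2 \le K^2 p,
\]
and summing produces the predictable quadratic variation bound $V := \sum_{i=1}^n \E[(X_i - X_{i-1})^2 \mid \mathcal{F}_{i-1}] \le K^2 n p$ almost surely.

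Finally I would invoke Freedman's martingale concentration inequality, which for a martingale with increments bounded by $K$ and predictable quadratic variation at most $V$ yields a two-sided tail bound of the form $2\exp(-t^2/(2V + 2Kt/3))$. Substituting $V \le K^2 np$ and absorbing the leading factor of $2$ along with the small numerical constants into the slightly looser denominator $4K^2 np + 2Kt$ gives exactly the claimed inequality. The only non-mechanical ingredient is the conditional-variance computation above; the rest is a direct application of Freedman's inequality as a black box, so I do not anticipate any substantive obstacle beyond tracking the constants carefully.
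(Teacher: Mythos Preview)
Your proposal is correct and follows essentially the same approach as the paper: form the Doob martingale, bound the conditional variance of each increment using the Bernoulli$(p)$ structure, and apply Freedman's inequality. Your variance computation $\E[(X_i-X_{i-1})^2\mid\mathcal{F}_{i-1}]=p(1-p)(Y_i-Z_i)^2\le K^2p$ is in fact slightly sharper than the paper's cruder bound of $2K^2p$; the only caveat is that ``absorbing the factor of $2$'' into the denominator is not literally valid for all $t$, but the paper's own proof is equally casual about this point and it is immaterial for every application.
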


\begin{proof}
We use Freedman's inequality (\ref{lem:freedman}), with the Doob
martingale $\randX\left(0\right),\dots,\randX\left(n\right)$ defined
by $\boldsymbol{X}\left(i\right)=\E\left[f\left(\boldsymbol{\omega}\right)\cond\boldsymbol{\omega}_{1},\dots,\boldsymbol{\omega}_{i}\right]$.
With $\Delta\randX\left(i\right)$ as the one-step change $\randX\left(i+1\right)-\randX\left(i\right)$
and with $V\left(i\right)=\sum_{i=0}^{i}\E\left[\left(\Delta\randX\left(i\right)\right)^{2}\cond\boldsymbol{\omega}_{1},\dots,\boldsymbol{\omega}_{j}\right]$,
it suffices to show that $V\left(n\right)\le2K^{2}np$ with probability
1.

Condition on $\boldsymbol{\omega}_{1},\dots,\boldsymbol{\omega}_{i}$
(thereby conditioning on $\randX\left(i\right)$). Let $X^{0}$ and
$X^{1}$ be the values of $\randX\left(i+1\right)$ in the cases $\boldsymbol{\omega}_{i+1}=0$
and $\boldsymbol{\omega}_{i+1}=1$, respectively. We have 
\begin{align*}
\randX\left(i\right) & =pX^{1}+\left(1-p\right)X^{0},\\
\left|\randX\left(i\right)-X^{0}\right| & =p\left|X^{1}-X^{0}\right|\le Kp.
\end{align*}
So, 
\begin{align*}
\E\left[\left(\Delta\randX\left(i\right)\right)^{2}\cond\boldsymbol{\omega}_{1},\dots,\boldsymbol{\omega}_{i}\right] & =p\left(\randX\left(i\right)-X^{1}\right)^{2}+\left(1-p\right)\left(\randX\left(i\right)-X^{0}\right)^{2}\\
 & \le K^{2}p+\left(1-p\right)K^{2}p^{2}\\
 & \le2K^{2}p.
\end{align*}
The desired bound on $V\left(n\right)$ follows.
\end{proof}
Since the first version of this paper, we learned that \ref{thm:bernstein-type}
is also a direct consequence of \cite[Theorem~1.3]{War16}.

\section{\label{sec:completion-count}Counting completions of Steiner triple
systems}

\global\long\def\coG{G}%

\global\long\def\Sext#1{\mathcal{S}^{*}\left(#1\right)}%

\global\long\def\randl{\boldsymbol{\lambda}}%

\global\long\def\randz{\boldsymbol{z}}%

\global\long\def\randR{\boldsymbol{R}}%

In this section we prove \ref{lem:num-extensions}. This is accomplished
with minor adaptations of proofs by Linial and Luria \cite{LL13}
and Keevash \cite{Kee15}. As in \ref{sec:random-STS}, let $N={n \choose 2}/3$
and assume that $n$ is 1 or 3 mod 6.

For a partial system $S\in\S_{\alpha N}^{n^{-a}}$, let $\Sext S$
be the number of Steiner triple systems that include $S$. We want
to determine $\left|\ext S\right|=\left(N-\alpha N\right)!\left|\Sext S\right|$
up to a factor of $e^{n^{2-b}}$ (for some $b>0$).

First, we can get an upper bound via the \emph{entropy method}, as
used by Linial and Luria \cite{LL13}. The reader may wish to refer
to that paper for more detailed exposition.

Before we begin the proof, we briefly remind the reader of the basics
of the notion of entropy. For random elements $\randX,\randY$ with
supports $\supp\randX$, $\supp\randY$, we define the (base-$e$)
\emph{entropy }
\[
H\left(\randX\right)=-\sum_{x\in\supp\randX}\Pr\left(\randX=x\right)\log\left(\Pr\left(\randX=x\right)\right)
\]
and the\emph{ conditional entropy}
\[
H\left(\randX\cond\randY\right)=\sum_{y\in\supp\randY}\Pr\left(\randY=y\right)H\left(\randX\cond\randY=y\right).
\]
We will use two basic properties of entropy. First, we always have
$H\left(\randX\right)\le\log\,\left|\supp\randX\right|$, with equality
only when $\randX$ has the uniform distribution on its support. Second,
for any sequence of random elements $\randX_{1},\dots,\randX_{n}$,
we have
\[
H\left(\randX_{1},\dots,\randX_{n}\right)=\sum_{i=1}^{n}H\left(\randX_{i}\cond\randX_{1},\dots,\randX_{i-1}\right).
\]
See for example \cite{CT12} for an introduction to the notion of
entropy and proofs of the above two facts.
\begin{thm}
\label{thm:number-of-completions-upper}For any $a>0$, any $\alpha\in\left[0,1\right]$,
and any $S\in\cSm{n^{-a},2}{\alpha N}$,
\[
\left|\Sext S\right|\le\left(\left(1+O\left(n^{-a}+n^{-1/2}\right)\right)\left(\frac{1-\alpha}{e}\right)^{2}n\right)^{N\left(1-\alpha\right)}.
\]
\end{thm}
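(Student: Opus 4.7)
The plan is to adapt the entropy-based proof of Linial and Luria \cite{LL13} for the case $\alpha=0$ to handle completions of a $(n^{-a},2)$-quasirandom partial Steiner triple system. Let $\randR\in\Sext S$ be a uniformly random completion, so that $\log|\Sext S| = H(\randR)$, and for each pair $\{i,j\}\in E(G(S))$ let $X_{ij}$ be the third vertex of the triple of $\randR$ containing $\{i,j\}$. Then $\randR$ is determined by the family $(X_{ij})$, and by $(n^{-a},2)$-quasirandomness every codegree $D_{ij} := |N_{G(S)}(i) \cap N_{G(S)}(j)|$ satisfies $D_{ij} = (1 \pm O(n^{-a}))(1-\alpha)^2 n$.

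Sample a uniformly random ordering $\pi$ of $E(G(S))$, a set of size $M = 3(1-\alpha)N$. The entropy chain rule together with the inequality $H(\cdot)\le\log|\supp(\cdot)|$ yields
\[
H(\randR) = \E_\pi \sum_{\{i,j\}} H\bigl(X_{ij}\,\big|\,X_{i'j'}\colon\pi(\{i',j'\}) < \pi(\{i,j\})\bigr) \le \E_\pi \sum_{\{i,j\}} \log\randz_{ij}(\pi),
\]
where $\randz_{ij}(\pi)$ counts the values of $X_{ij}$ compatible with the previously revealed information. The key observation is that $\randz_{ij}(\pi) = 1$ unless $\{i,j\}$ is the first of the three pairs of its triple to appear in $\pi$, so only $(1-\alpha)N$ pairs (one per triple) contribute nontrivially to the sum.

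For such a first pair $\{i,j\}$ at position $t+1$ in $\pi$, the candidates for $X_{ij}$ are contained in the set of $k \in N_{G(S)}(i) \cap N_{G(S)}(j)$ for which neither $\{i,k\}$ nor $\{j,k\}$ has already been revealed. A computation of the expected size of this set (a vertex $k$ survives with probability $\approx (1-t/M)^2$), combined with Stirling's formula in the form $\tfrac{1}{M}\sum_{t=0}^{M-1}2\log(1-t/M) = -2 + O(M^{-1}\log M)$, gives
\[
H(\randR) \le (1-\alpha)N\,\Bigl(\log\bigl((1-\alpha)^2 n\bigr) - 2 + O\bigl(n^{-a}+n^{-1/2}\bigr)\Bigr),
\]
which exponentiates to the claimed bound $|\Sext S| \le \bigl((1+O(n^{-a}+n^{-1/2}))((1-\alpha)/e)^2 n\bigr)^{(1-\alpha)N}$.

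The main obstacle is controlling the $O(n^{-a} + n^{-1/2})$ error uniformly over pairs and positions. The $n^{-a}$ term comes directly from quasirandomness of codegrees in $G(S)$, while the $n^{-1/2}$ term arises from the need to show that the number of ``killed'' codegree-neighbours of a typical pair concentrates around its mean throughout the random exposure; since the killed-count is driven by $O(n)$ indicator variables (one per incident pair), an Azuma--Hoeffding bound applied to the martingale driven by $\pi$ yields a relative fluctuation of order $n^{-1/2}$. Apart from these concentration estimates, the argument is a direct transcription of the Linial--Luria entropy calculation, with the codegree $n-2$ replaced by $(1-\alpha)^2 n$.
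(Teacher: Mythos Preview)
Your outline has a genuine gap that produces the wrong constant. The support bound you use is too weak: you rule out a vertex $k$ only when $\{i,k\}$ or $\{j,k\}$ itself has been revealed, giving each $k$ a survival probability of roughly $(1-t/M)^2$. But a correct accounting shows this leads only to the constant $e^{-2/3}$, not $e^{-2}$. Indeed, the position of a ``first'' edge is \emph{not} uniform in $\{1,\dots,M\}$; it has density proportional to $(1-t/M)^2$, so the relevant average of $2\log(1-t/M)$ is
\[
\int_0^1 3p^2\cdot 2\log p\,dp \;=\; -\tfrac{2}{3},
\]
not $\frac{1}{M}\sum_t 2\log(1-t/M)=-2$ as you wrote. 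Your ``Stirling'' identity is correct as a formula but it is not the quantity that appears in the calculation.

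What the paper (following Linial--Luria) actually uses is the stronger observation that $k$ is already ruled out once \emph{any} edge of the triple through $\{i,k\}$ or of the triple through $\{j,k\}$ has been revealed: revealing $X_{k\ell}=i$ forces $X_{ik}=\ell\ne j$, for instance. This gives each $k$ a survival probability of $\lambda_e^6$ rather than $\lambda_e^2$, and after multiplying by $\lambda_e^2$ for the first-edge event one computes
\[
\int_0^1 \lambda^2\log\bigl(1+C\lambda^6\bigr)\,d\lambda \;=\; \tfrac{1}{3}\Bigl(\log(1+C)-2+\tfrac{2\arctan\sqrt{C}}{\sqrt{C}}\Bigr),
\]
which yields the correct $-2$. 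The paper handles $\E[\log R_e]$ by Jensen's inequality, not concentration; the $n^{-1/2}$ error term comes from the $\arctan\sqrt{C}/\sqrt{C}$ remainder above (with $C\asymp n$), not from Azuma--Hoeffding fluctuations. So the concentration step you describe as ``the main obstacle'' is both unnecessary and not the source of the $n^{-1/2}$.
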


\begin{proof}
Let $\randS^{*}\in\Sext S$ be a uniformly random completion of $S$.
We will estimate the entropy $H\left(\randS^{*}\right)=\log\,\left|\Sext S\right|$
of $\randS^{*}$.

Let $\coG=G\left(S\right)$. For each $e=\left\{ x,y\right\} \in\coG$,
let $\left\{ x,y,\randz_{e}\right\} $ be the hyperedge that includes
$e$ in $\randS^{*}$. So, the sequence $\left(\randz_{e}\right)_{e\in\coG}$
determines $\randS^{*}$. For any ordering on the edges of $\coG$,
we have
\begin{equation}
H\left(\randS^{*}\right)=\sum_{e\in\coG}H\left(\randz_{e}\cond\left(\randz_{e'}\,\colon\,e'<e\right)\right).\label{eq:H(S)}
\end{equation}
Now, a sequence $\lambda\in\left[0,1\right]^{E\left(\coG\right)}$
with all $\lambda_{e}$ distinct induces an ordering on the edges
of $\coG$, with $e'<e$ when $\lambda_{e'}>\lambda_{e}$. Let $\randR_{e}\left(\lambda\right)$
be an upper bound on $\left|\supp\left(\randz_{e}\cond\randz_{e'}\,\colon\,\lambda_{e'}>\lambda_{e}\right)\right|$
defined as follows. $\randR_{e}\left(\lambda\right)=1$ if $\lambda_{\left\{ x,\randz_{e}\right\} }>\lambda_{e}$
or $\lambda_{\left\{ y,\randz_{e}\right\} }>\lambda_{e}$ (because
in this case $\randz_{e}$ is determined). Otherwise, $\randR_{e}\left(\lambda\right)$
is 1 plus the number of vertices $v\notin\left\{ x,y,\randz_{e}\right\} $
such that $\left\{ x,v\right\} ,\left\{ y,v\right\} \in\coG$, and
$\lambda_{e'}<\lambda_{e}$ for each of the 6 edges $e'\in\coG$ included
in the hyperedges that include $\left\{ x,v\right\} $ and $\left\{ y,v\right\} $
in $\randS^{*}$. Since $\randR_{e}\left(\lambda\right)$ is an upper
bound on $\left|\supp\left(\randz_{e}\cond\randz_{e'}\,\colon\,\lambda_{e'}>\lambda_{e}\right)\right|$,
we have
\begin{equation}
H\left(\randz_{e}\cond\randz_{e'}\,\colon\,\lambda_{e'}>\lambda_{e}\right)\le\E\left[\log\randR_{e}\left(\lambda\right)\right].\label{eq:H(z)}
\end{equation}
It follows from \ref{eq:H(S)} and \ref{eq:H(z)} that
\[
H\left(\randS^{*}\right)\le\sum_{e\in\coG}\E\left[\log\randR_{e}\left(\lambda\right)\right].
\]
This is true for any fixed $\lambda$, so it is also true if $\lambda$
is chosen randomly, as follows. Let $\randl=\left(\randl_{e}\right)_{e\in\coG}$
be a sequence of independent random variables, where each $\randl_{e}$
has the uniform distribution in $\left[0,1\right]$. (With probability
1 each $\randl_{v}$ is distinct). Then
\[
H\left(\randS^{*}\right)\le\sum_{e\in\coG}\E\left[\log\randR_{e}\left(\randl\right)\right].
\]
Next, for any $S^{*}\in\Sext S$ and $\lambda_{e}\in\left[0,1\right]$,
let 
\[
R_{e}^{S^{*},\lambda_{e}}=\E\left[\randR_{e}\left(\randl\right)\cond\randS^{*}=S^{*},\,\randl_{e}=\lambda_{e},\,\randl_{\left\{ x,\randz_{e}\right\} },\randl_{\left\{ y,\randz_{e}\right\} }<\randl_{e}\right].
\]

(Note that $\randl_{e}=\lambda_{e}$ occurs with probability zero,
so formally we should condition on $\randl_{e}=\lambda_{e}\pm\d\lambda_{e}$
and take limits in what follows, but there are no continuity issues
so we will ignore this detail). Now, in $\coG$, by $\left(n^{-a},2\right)$-quasirandomness
$x$ and $y$ have $\left(1+O\left(n^{-a}\right)\right)\left(1-\alpha\right)^{2}n$
common neighbours other than $\randz_{e}$. By the definition of $\randR_{e}\left(\randl\right)$
and linearity of expectation, we have 
\[
R_{e}^{S^{*},\lambda_{e}}=1+\left(1+O\left(n^{-a}\right)\right)\left(1-\alpha\right)^{2}\lambda_{e}^{6}n.
\]
By Jensen's inequality,
\[
\E\left[\log\randR_{e}\left(\randl\right)\cond\randS^{*}=S^{*},\,\randl_{e}=\lambda_{e},\,\randl_{\left\{ x,\randz_{e}\right\} },\randl_{\left\{ y,\randz_{e}\right\} }<\randl_{e}\right]\le\log R_{e}^{S^{*},\lambda_{e}},
\]
and 
\[
\Pr\left(\randl_{\left\{ x,\randz_{e}\right\} },\randl_{\left\{ y,\randz_{e}\right\} }<\randl_{e}\cond\randl_{e}=\lambda_{e}\right)=\lambda_{e}^{2},
\]
so
\[
\E\left[\log\randR_{e}\left(\randl\right)\cond\randS^{*}=S^{*},\,\randl_{e}=\lambda_{e}\right]\le\lambda_{e}^{2}\log R_{e}^{S^{*},\lambda_{e}}+\left(1-\lambda_{e}^{2}\right)\log1=\lambda_{e}^{2}\log R_{e}^{S^{*},\lambda_{e}}.
\]
We then have 
\begin{align*}
\E\left[\log\randR_{e}\left(\randl\right)\cond\randS^{*}=S^{*}\right] & \le\E\left[\lambda_{e}^{2}\log R_{e}^{S^{*},\randl_{e}}\right]\\
 & =\int_{0}^{1}\lambda_{e}^{2}\log\left(1+\left(1+O\left(n^{-a}\right)\right)\left(1-\alpha\right)^{2}\lambda_{e}^{6}n\right)\d\lambda_{e}.
\end{align*}
For $C>0$ we can compute
\begin{align}
\int_{0}^{1}t^{2}\log\left(1+Ct^{6}\right)\d t & =\frac{1}{3}\left(\log\left(1+C\right)-2+\frac{2\arctan\sqrt{C}}{\sqrt{C}}\right),\label{eq:complicated-integral}
\end{align}
so (taking $C=\left(1+O\left(n^{-a}\right)\right)\left(1-\alpha\right)^{2}n$)
we deduce
\[
\E\left[\log\randR_{e}\left(\randl\right)\cond\randS^{*}=S^{*}\right]\le\frac{1}{3}\left(\log\left(\left(1-\alpha\right)^{2}n\right)-2\right)+O\left(n^{-a}+n^{-1/2}\right).
\]
We conclude that
\begin{align*}
\log\,\left|\Sext S\right| & \le H\left(\randS^{*}\right)\\
 & \le\sum_{e\in\coG}\E\left[\log\randR_{e}\left(\randl\right)\right]\\
 & \le\left(N-\alpha N\right)\left(\log\left(\left(1-\alpha\right)^{2}n\right)-2+O\left(n^{-a}+n^{-1/2}\right)\right),
\end{align*}
which is equivalent to the theorem statement.
\end{proof}
For the lower bound, we will count ordered Steiner triple systems.
\begin{thm}
\label{thm:number-of-completions-lower}Fixing sufficiently large
$h\in\NN$ and any $a>0$, there is $b=b\left(a,h\right)>0$ such
that the following holds. For any $\alpha\in\left(0,1\right)$ and
any $S\in\cordm{n^{-a},h}{\alpha N}$,
\[
\left|\ext S\right|\ge\left(\left(1-O\left(n^{-b}\right)\right)\left(\frac{1-\alpha}{e}\right)^{2}n\right)^{N\left(1-\alpha\right)}\left(N-\alpha N\right)!.
\]
\end{thm}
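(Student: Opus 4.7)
The plan is to adapt Keevash's lower bound argument from \cite{Kee15} on the total number of Steiner triple systems, but starting from the quasirandom partial system $S$ rather than from the empty system. The construction proceeds in two stages: a \emph{nibble} phase, where the triangle removal process is used to add most of the remaining hyperedges, and a \emph{completion} phase, where Keevash's theorem \cite{Kee14} fills in the rest.

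First, run the triangle removal process on $G(S)$ for $m := (1-\varepsilon)N - \alpha N$ steps, where $\varepsilon = n^{-c}$ for a sufficiently small $c = c(a,h) > 0$. A standard analysis of triangle removal (deferred to \ref{sec:random-triangle-removal}) shows that, except with probability $\exp(-\Omega(n^{2-c'}))$, the process succeeds without aborting and every intermediate graph is $(\varepsilon_j, h-1)$-quasirandom, where $\varepsilon_j \le n^{-a'}$ for some $a' = a'(a,c,h)>0$. Combined with \ref{prop:H-count}, this means that the number $T_j$ of triangles remaining after $j$ hyperedges have been removed satisfies $T_j \in (1\pm O(\varepsilon_j))(1-j/N)^3 n^3/6$ along any such ``good'' trajectory.

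Now I count good trajectories. Along each good ordered trajectory $R$, the probability the process outputs $R$ equals $\prod_j 1/T_j(R)$, which is at most $\prod_j 1/T_j^-$, where $T_j^- := (1-O(\varepsilon_j))(1-j/N)^3 n^3/6$. Since good trajectories collectively account for probability $1-o(1)$, their number is at least $(1-o(1))\prod_j T_j^-$. For each good $R$, the ordered partial system obtained from $S$ by appending $R$ remains quasirandom enough to invoke Keevash's completion theorem, so it extends to at least one Steiner triple system; ordering the $\varepsilon N$ remaining hyperedges in any of $(\varepsilon N)!$ ways yields
\[
|\ext S| \ge (1-o(1)) \left(\prod_{j=\alpha N}^{(1-\varepsilon) N - 1} T_j^-\right) (\varepsilon N)!.
\]
Taking logarithms and evaluating via Stirling and the Riemann sum
\[
\sum_{j=\alpha N}^{(1-\varepsilon)N-1} \log(1-j/N) \approx N(1-\alpha)[\log(1-\alpha)-1] + O(\varepsilon N\log(1/\varepsilon))
\]
reduces this to $\bigl[((1-\alpha)/e)^2 n\bigr]^{(1-\alpha)N} (N-\alpha N)!$, up to a multiplicative factor of $\exp(O(n^{2-c}\log n))$. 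Choosing $b < c$ gives the claimed bound.

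The main technical obstacle is the nibble analysis: verifying that $(\varepsilon_j, h-1)$-quasirandomness propagates through all $m$ steps of triangle removal with exponentially small failure probability, and with $\varepsilon_j$ remaining polynomially small so that Keevash's completion theorem still applies at the end. This is the routine but technical content deferred to \ref{sec:random-triangle-removal}. A secondary point is choosing $h$ large enough (compared to the constant that Keevash's completion theorem requires for its quasirandomness hypothesis) and then $c$ small enough in terms of $a$ and $h$ so that every accumulated error absorbs into a single factor of $\exp(O(n^{2-b}))$.
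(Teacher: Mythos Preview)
Your proposal is correct and follows essentially the same approach as the paper: run triangle removal on $G(S)$ down to density $\varepsilon=n^{-c}$, use the maintained quasirandomness (deferred to \ref{sec:random-triangle-removal}) to lower-bound the number of good ordered trajectories via $\prod_j T_j^-$, invoke Keevash's completion theorem on each good endpoint, and evaluate the product by a Riemann sum. Two minor remarks: the failure probability you quote, $\exp(-\Omega(n^{2-c'}))$, is stronger than what the analysis in \ref{sec:random-triangle-removal} actually yields (and stronger than needed---``a.a.s.''\ suffices for the counting step); and your explicit $(\varepsilon N)!$ factor for ordering the completed hyperedges is fine but unnecessary, since it is absorbed into the $\exp(O(n^{2-b}))$ slack anyway.
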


To prove \ref{thm:number-of-completions-lower} we will need an analysis
of the triangle removal process (which we provide in \ref{sec:random-triangle-removal})
and the following immediate consequence of \cite[Theorem~2.1]{Kee15}.
\begin{thm}
\label{thm:keevash}There are $h\in\NN$, $\varepsilon_{0},a\in\left(0,1\right)$
and $n_{0},\ell\in\NN$ such that if $S\in\cSm{\varepsilon,h}m$ is
a partial system with $n\ge n_{0}$, $d\left(G\left(S\right)\right)=1-m/N\ge n^{-a}$
and $\varepsilon\le\varepsilon_{0}d\left(G\right)^{\ell}$, then $S$
can be completed to a Steiner triple system.
\end{thm}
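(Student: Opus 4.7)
The plan is to prove the lower bound on $|\ext S|$ via the elementary counting principle that if $\randX$ takes values in a set $A$ (up to failure), then $|A|\ge\Pr(\randX\in A)/\max_a\Pr(\randX=a)$. The random process $\randS$ will output an element of $\ext S$ with probability $1-o(1)$, and Theorem~\ref{thm:keevash} will be used only in its existential form (``some completion exists''), not for any quantitative bound.

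Fix a small $\delta>0$, let $\ell:=\lceil n^{2-\delta}\rceil$, and set $m_0:=(1-\alpha)N-\ell$. Define $\randS$ as follows: starting from $G(S)$, run the triangle removal process for $m_0$ steps to produce an ordered partial system $\randS^*$ extending $S$. Let $\mathcal{G}$ be the event that every intermediate graph $G(\randS^*_i)$ for $\alpha N\le i\le\alpha N+m_0$ is $(n^{-a'},h)$-quasirandom, with $a'$ and $h$ chosen large enough that the hypotheses of Theorem~\ref{thm:keevash} are met at the endpoint density $\ell/N\approx n^{-\delta}$. On $\mathcal{G}$, Theorem~\ref{thm:keevash} provides at least one completion of $\randS^*$; we then choose a uniformly random ordered completion, obtaining $\randS\in\ext S$. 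Off $\mathcal{G}$ (or if the triangle removal process aborts before step $m_0$), we declare failure. By the analysis of the triangle removal process in Section~\ref{sec:random-triangle-removal}, $\Pr(\mathcal{G})=1-o(1)$.

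For any $S^*\in\ext S$ whose prefix $S^*_{\alpha N+m_0}$ is a possible outcome of the triangle removal process on $\mathcal{G}$,
\[
\Pr(\randS=S^*)=\left(\prod_{i=\alpha N}^{\alpha N+m_0-1}\frac{1}{t(G(S^*_i))}\right)\cdot\frac{1}{|\ext{S^*_{\alpha N+m_0}}|},
\]
where $t(\cdot)$ counts triangles. Proposition~\ref{prop:H-count} gives $t(G(S^*_i))\ge(1-O(n^{-a'}))(1-i/N)^3n^3/6$ throughout, and $|\ext{S^*_{\alpha N+m_0}}|\ge 1$ is the only fact we need from Theorem~\ref{thm:keevash}. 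Taking logarithms, using the antiderivative $\int\log(1-x)\,\d x=(1-x)(\log(1-x)-1)$ to evaluate the Riemann sum $\sum_i\log(1-i/N)$, and combining the $n^3/6$ factors with Stirling's formula for $(N(1-\alpha))!$, yields
\[
\Pr(\randS=S^*)\le\frac{1}{\bigl((1-O(n^{-b}))(1-\alpha)^2 n/e^2\bigr)^{N(1-\alpha)}(N(1-\alpha))!}
\]
for a suitable $b=b(a,h)>0$. The truncation error from stopping $\ell$ steps early contributes $O(\ell\log(N/\ell))=O(n^{2-\delta}\log n)$ to the logarithm, absorbed into $O(n^{2-b})$ by choosing $b<\delta$.

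The theorem follows: $|\ext S|\ge\Pr(\mathcal{G})/\max_{S^*}\Pr(\randS=S^*)\ge(1-o(1))\bigl((1-O(n^{-b}))(1-\alpha)^2n/e^2\bigr)^{N(1-\alpha)}(N(1-\alpha))!$. The main obstacle is establishing $\Pr(\mathcal{G})=1-o(1)$: we need $(n^{-a'},h)$-quasirandomness to be preserved, at the stringent level required by Theorem~\ref{thm:keevash}, across the entire $m_0$-step triangle removal process down to density $\approx n^{-\delta}$. This martingale-type analysis of the triangle removal process is the content of Section~\ref{sec:random-triangle-removal}.
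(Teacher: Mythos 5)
Your proposal does not prove \cref{thm:keevash} — it is circular. You explicitly invoke \cref{thm:keevash} as an ingredient (``On $\mathcal{G}$, \cref{thm:keevash} provides at least one completion of $\randS^*$''), so the argument assumes the very statement it is supposed to establish. What you have actually written is, in essence, the paper's proof of \cref{thm:number-of-completions-lower}: start the triangle removal process from $G(S)$, run it down to a small positive density while quasirandomness is maintained, invoke \cref{thm:keevash} to complete the residual quasirandom graph, and extract a lower bound on $\left|\ext S\right|$ by bounding $\Pr(\randS = S^*)$ from above. Your ``counting principle'' $|A|\ge\Pr(\randX\in A)/\max_a\Pr(\randX=a)$ is a tidy repackaging of the paper's direct lower bound on $\left|\ord^*\right|$, but the underlying argument is the same — and both versions rely on \cref{thm:keevash} as a black box.

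The paper does not prove \cref{thm:keevash} at all: it is stated (and used) as an ``immediate consequence of \cite[Theorem~2.1]{Kee15}'', which is Keevash's deep absorption-based existence theorem for triangle decompositions of quasirandom graphs. That result is not accessible by the kind of triangle-removal-plus-counting argument you give; indeed, the whole difficulty is producing even \emph{one} completion when the residual density is small, and nothing in your proposal addresses that. If you intended to prove \cref{thm:number-of-completions-lower} instead, your sketch is broadly on the right track, but you should cite \cref{thm:keevash} as external input (as the paper does) rather than present it as something you are proving. Also note a symbol clash: you reuse $\ell$ for a step count $\lceil n^{2-\delta}\rceil$, but in the statement $\ell$ is one of the fixed constants appearing in the hypothesis $\varepsilon\le\varepsilon_0 d(G)^\ell$.
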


\global\long\def\extm#1#2{\mathcal{O}_{#1}^{\mathrm{ext}}\left(#2\right)}%

\begin{proof}[Proof of \ref{thm:number-of-completions-lower}]
Let $h\ge2$, $\ell$, $\varepsilon_{0}$ be as in \ref{thm:keevash}.
Let $c>0$ be smaller than $a\cdot b\left(a,h\right)$ in the notation
of \ref{lem:triangle-removal-analysis}, and smaller than the ``$a$''
in \ref{thm:keevash}. Let $\varepsilon=n^{-c/\ell}/\varepsilon_{0}$
and $M=\left(1-\varepsilon\right)N$. Let $\randS^{*}\in\ordm M\cup\left\{ *\right\} $
be the result of running the triangle removal process on $G\left(S\right)$
to build a partial system extending $S$, until there are $M$ hyperedges
(that is, $\randS^{*}=S\cup\randS$ where $\randS\sim\RR\left(S,M-\alpha N\right)$
in the notation of \ref{sec:nibble-TRP-coupling}). Let $\ord^{*}$
be the set of $M$-hyperedge $\left(\varepsilon,h\right)$-quasirandom
ordered partial systems $S^{*}\in\cordm{\varepsilon,h}M$ extending
$S$. The choice of $c$ ensures that by \ref{lem:triangle-removal-analysis}
we a.a.s. have $\randS^{*}\in\ord^{*}$, and by \ref{thm:keevash}
each $S^{*}\in\ord^{*}$ can be completed to an ordered Steiner triple
system.

Now, by \ref{prop:H-count} and quasirandomness, for each $S^{*}\in\ord^{*}$,
the number of triangles in each $G\left(S_{i}^{*}\right)$ is 
\[
\left(1\pm O\left(n^{-c}\right)\right)\left(1-i/N\right)^{3}n^{3}/6,
\]
so
\[
\Pr\left(\randS^{*}=S^{*}\right)\le\prod_{i=\alpha N}^{M-1}\frac{1}{\left(1-O\left(n^{-c}\right)\right)\left(1-i/N\right)^{3}n^{3}/6}.
\]
As discussed, using \ref{lem:triangle-removal-analysis} we have
\[
\sum_{S^{*}\in\ord^{*}}\Pr\left(\randS^{*}=S^{*}\right)=1-o\left(1\right),
\]
so 
\begin{align*}
\left|\ord^{*}\right| & \ge\left(1-o\left(1\right)\right)\prod_{i=\alpha N}^{M-1}\left(1-O\left(n^{-c}\right)\right)\left(1-\frac{i}{N}\right)^{3}\frac{n^{3}}{6}\\
 & =\left(\left(1-O\left(n^{-c}\right)\right)\frac{n^{3}}{6}\right)^{\left(1-\alpha\right)N}\exp\left(3\sum_{i=\alpha N}^{M-1}\log\left(1-\frac{i}{N}\right)\right).
\end{align*}
Now, note that 
\[
\sum_{i=\alpha N}^{M-1}\frac{1}{N}\log\left(1-\frac{i+1}{N}\right)\le\int_{\alpha}^{\left(1-\varepsilon\right)}\log\left(1-t\right)\d t\le\sum_{i=\alpha N}^{M-1}\frac{1}{N}\log\left(1-\frac{i}{N}\right).
\]
We compute
\begin{align*}
\sum_{i=\alpha N}^{M}\left(\log\left(1-\frac{i}{N}\right)-\log\left(1-\frac{i+1}{N}\right)\right) & =\sum_{i=\alpha N}^{M}\log\left(1+\frac{1}{N-\left(i+1\right)}\right)\\
 & \le\sum_{i=\alpha N}^{M}\frac{1}{N-\left(i+1\right)}\\
 & =O\left(\log n\right),
\end{align*}
so, noting that $\int\log s\d s=s\left(\log s-1\right)$,
\begin{align*}
3\sum_{i=\alpha N}^{M}\log\left(1-\frac{i}{N}\right) & =3N\int_{\alpha}^{\left(1-\varepsilon\right)}\log\left(1-t\right)\d t+O\left(\log n\right)\\
 & =3N\int_{\varepsilon}^{\left(1-\alpha\right)}\log s\d s+O\left(\log n\right)\\
 & =3N\left(\left(1-\alpha\right)\left(\log\left(1-\alpha\right)-1\right)-\varepsilon\left(\log\varepsilon-1\right)\right)+O\left(\log n\right),\\
\exp\left(3\sum_{i=\alpha N}^{M}\log\left(1-\frac{i}{N}\right)\right) & =\left(\left(1+O\left(n^{-c/\ell}\log n\right)\right)\frac{1-\alpha}{e}\right)^{3N\left(1-\alpha\right)}.
\end{align*}
For $b<c/\ell$, it follows that
\begin{align*}
\left|\ord^{*}\right| & \ge\left(\left(1-O\left(n^{-b}\right)\right)\frac{n^{3}\left(1-\alpha\right)^{3}}{6e^{3}}\right)^{\left(1-\alpha\right)N}\\
 & =\left(\left(1-O\left(n^{-b}\right)\right)\left(\frac{1-\alpha}{e}\right)^{2}n\right)^{\left(1-\alpha\right)N}\left(N-\alpha N\right)!.
\end{align*}
Recalling that each $S^{*}\in\ord^{*}$ can be completed, the desired
result follows.
\end{proof}
Now, it is extremely straightforward to prove \ref{lem:num-extensions}.
\begin{proof}
Let $b\le\min\left\{ a,1/2\right\} $ and $h\ge2$ satisfy \ref{thm:number-of-completions-lower}.
By \ref{thm:number-of-completions-upper} we have 
\[
\left|\ext S\right|\le\left|\Sext S\right|\left(N-\alpha N\right)!\le\left(\left(1+O\left(n^{-b}\right)\right)\left(\frac{1-\alpha}{e}\right)^{2}n\right)^{N\left(1-\alpha\right)}\left(N-\alpha N\right)!,
\]
and by \ref{thm:number-of-completions-lower} we have
\[
\left|\ext{S'}\right|\ge\left(\left(1-O\left(n^{-b}\right)\right)\left(\frac{1-\alpha}{e}\right)^{2}n\right)^{N\left(1-\alpha\right)}\left(N-\alpha N\right)!.
\]
Dividing these bounds gives
\[
\frac{\left|\ext S\right|}{\left|\ext{S'}\right|}\le\left(1-O\left(n^{-b}\right)\right)^{N\left(1-\alpha\right)}\le\exp\left(O\left(n^{2-b}\right)\right).\tag*{\qedhere}
\]
\end{proof}

\section{\label{sec:random-triangle-removal}An analysis of the triangle removal
process}

\global\long\def\randQ{\boldsymbol{Q}}%

\global\long\def\randY{\boldsymbol{Y}}%

\global\long\def\randT{\boldsymbol{T}}%

\global\long\def\randP{\boldsymbol{P}}%

\global\long\def\randZ{\boldsymbol{Z}}%

The triangle removal process starting from the complete graph has
already been thoroughly analysed; see in particular the precise analysis
by Bohman, Frieze and Lubetzky  \cite{BFL15} and their simplified
analysis in \cite{BFL10}. In this paper, we will need an analysis
of the triangle removal process starting from a quasirandom graph.
This basically follows from the aforementioned work of Bohman, Frieze
and Lubetzky, but since the result we need was not stated in a concrete
form in their papers, we provide our own (very simplified, and very
crude) analysis in this section. We emphasise that this section contains
no new ideas.

As in \ref{sec:random-STS}, let $N={n \choose 2}/3$ and assume that
$n$ is 1 or 3 mod 6. As previously introduced, the triangle removal
process is defined as follows. We start with a graph $G$ with say
$3N-3m$ edges, then iteratively delete (the edges of) a triangle
chosen uniformly at random from all triangles in the remaining graph.
Let 
\[
G=\randG\left(m\right),\randG\left(m+1\right),\dots
\]
be the sequence of random graphs generated by this process. This process
cannot continue forever, but we ``freeze'' the process instead of
aborting it: if $\randG\left(\randM\right)$ is the first graph in
the sequence with no triangles, then let $\randG\left(i\right)=\randG\left(\randM\right)$
for $i\ge\randM$.

Our objective in this section is to show that if $G$ is quasirandom
then the triangle removal process is likely to maintain quasirandomness
and unlikely to freeze until nearly all edges are gone.
\begin{thm}
\label{lem:triangle-removal-analysis}For all $h\ge2$ and $a>0$
there is $b\left(a,h\right)>0$ such that the following holds. Let
$n^{-a}\le\varepsilon<1/2$ and suppose $G$ is a $\left(\varepsilon,h\right)$-quasirandom
graph with $N-3m$ edges. Then a.a.s. $\randM\ge\left(1-\varepsilon^{b}\right)N$
and moreover for each $m\le i\le\left(1-\varepsilon^{b}\right)N$,
the graph $\randG\left(i\right)$ is $\left(\varepsilon^{b},h\right)$-quasirandom.
\end{thm}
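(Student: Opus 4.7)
The plan is to track, throughout the process, the size of every common neighborhood $X_A(i) := \left|\bigcap_{w\in A} N_{\randG(i)}(w)\right|$ for each vertex set $A$ with $|A|\le h$, and show that it stays close to its predicted trajectory $\left(1 - (i-m)/N\right)^{|A|} d(G)^{|A|} n$. Once these quantities are controlled, \ref{prop:H-count} gives $\Omega(d^3 n^3)$ triangles whenever the current density $d$ exceeds $\varepsilon^b$, so the process cannot freeze until $i \ge (1-\varepsilon^b)N$.

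First I would partition the process into $O(\varepsilon^{-c})$ consecutive ``bites'', each of length $\delta N$ triangles for some carefully chosen $\delta = \varepsilon^{c'}$. Using \ref{lem:bite-transfer} applied to the current partial system $S$ at the start of each bite, I can couple the bite (at the cost of an $O(1)$ factor in the failure probability) with a sample $\randS^* \in \GSnp{S}{\delta/n}$, which is a function of the independent Bernoulli indicators for the $\Theta(n^3)$ candidate triples. Within a single bite, each $X_A$ can be written as a function $f$ of these Bernoullis with Lipschitz constant $K = O(1)$ (flipping one hyperedge changes $X_A$ by $O(1)$). Then \ref{thm:bernstein-type} with $np \sim \delta n^2$ gives
\[
\Pr\left(|X_A - \E X_A| > t\right) \le \exp\left(-\Omega\left(\frac{t^2}{\delta n^2 + t}\right)\right),
\]
which is sub-exponentially small for $t = \varepsilon^{c''} \cdot (\text{current value of } X_A)$.

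The expected change $\E X_A - X_A$ within a bite can be computed by using \ref{prop:H-count}, applied to the current (quasirandom) graph, to count the triples whose removal decreases $X_A$. This gives the predicted per-bite multiplicative decay, up to a relative error of order $\varepsilon^{c'''} + \delta$. Combining the expected behaviour with the concentration bound, each individual $X_A$ evolves to within $\varepsilon^{c'''} + \delta + \varepsilon^{c''}$ of its trajectory per bite with probability $1 - \exp(-n^{\Omega(1)})$, and a union bound over the $O(n^h)$ choices of $A$ and $O(\varepsilon^{-c})$ bites leaves a failure probability that is still $o(1)$.

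The main obstacle is the accumulation of errors across bites: each bite updates the quasirandomness parameter from $\varepsilon_i$ to $\varepsilon_{i+1} = \varepsilon_i(1 + O(\delta + \varepsilon_i^{c''''}))$, and I need the final value to be at most $\varepsilon^b$ for some fixed $b>0$, even after $\varepsilon^{-O(1)}$ bites. This forces $\delta$ to be small enough that the cumulative multiplicative blow-up is bounded by $\varepsilon^{-(1-b)}$, while being large enough that Bernstein still gives an exponentially small tail on each bite. Balancing these constraints fixes $b = b(a,h) > 0$ as a concrete polynomial function of $a$ and $1/h$. Once quasirandomness propagates all the way to density $\varepsilon^b$, the existence of triangles (and hence $\randM \ge (1-\varepsilon^b)N$) is immediate from \ref{prop:H-count}.
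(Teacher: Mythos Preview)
Your approach has a genuine gap in the way it invokes the bite-transfer lemma. \ref{lem:bite-transfer} gives only a one-sided comparison: the coupling in its proof realises $\randS^{*}\in\GSnp{S}{\delta/n}$ as a \emph{subset} of the triangles removed by $\RR(S,\delta N)$. Hence the common neighbourhood $X_A$ after the actual bite satisfies $X_A^{\text{removal}}\le X_A^{\text{bite}}$, and concentration for the bite model transfers only the \emph{upper} tail of $X_A$ to the removal process. Quasirandomness needs the lower tail as well, and ``$X_A\ge L$'' is monotone \emph{decreasing} in the partial system, which is exactly the direction \ref{lem:bite-transfer} does not handle. Without a separate argument for the lower bound on each $X_A$, the induction on bites cannot be closed, and the error-accumulation calculation you outline never gets started.

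The paper avoids this entirely by analysing the process step by step rather than in bites. It sets $Y_A(i)=X_A(i)$, chooses a \emph{growing} error envelope $e(i)=p(i)^{-C}\varepsilon^{c}$ (so that the drift of the envelope dominates the fluctuation of $Y_A$), and shows that each of the two signed deviations
\[
Y_A^{\pm}(i)=\pm\Bigl(Y_A(i\wedge T)-p^{|A|}(i\wedge T)\,n\,(1\pm e(i\wedge T))\Bigr)
\]
is a supermartingale up to a stopping time $T$. This is done by computing $\E[\Delta Y_A\mid \randG(i)]$ exactly (each vertex of the common neighbourhood leaves with probability $(1\pm O(e(i)))|A|/(p(i)N)$, via \ref{prop:H-count}) and comparing with the discrete derivative of $p^{|A|}(i)n(1\pm e(i))$. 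Since $|\Delta Y_A|\le 2$ deterministically and $\E[(\Delta Y_A)^2\mid\randG(i)]=O(1/n)$, Freedman's inequality (\ref{lem:freedman}) gives both tails simultaneously, and a union bound over the $O(n^{h})$ sets $A$ finishes. No monotonicity hypothesis is needed anywhere, and there is no accumulation of errors across stages because the growing envelope absorbs it automatically.
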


Note that $K_{n}$ is $\left(O\left(1/n\right),h\right)$-quasirandom
for any fixed $h$, so in particular when we start the triangle removal
process from $G=K_{n}$ it typically runs almost to completion.

To prove \ref{lem:triangle-removal-analysis}, it will be convenient
to use Freedman's inequality \cite[Theorem~1.6]{Fre75}, as follows.
(This was originally stated for martingales, but it also holds for
supermartingales with the same proof). Here and in what follows, we
write $\Delta X\left(i\right)$ for the one-step change $X\left(i+1\right)-X\left(i\right)$
in a variable $X$.
\begin{lem}
\label{lem:freedman}Let $\randX\left(0\right),\randX\left(1\right),\dots$
be a supermartingale with respect to a filtration $\left(\mathcal{F}_{i}\right)$.
Suppose that $\left|\Delta\randX\left(i\right)\right|\le K$ for all
$i$, and let $V\left(i\right)=\sum_{j=0}^{i-1}\E\left[\left(\Delta\randX\left(j\right)\right)^{2}\cond\mathcal{F}_{j}\right]$.
Then for any $t,v>0$,
\[
\Pr\left(\randX\left(i\right)\ge\randX\left(0\right)+t\mbox{ and }V\left(i\right)\le v\mbox{ for some }i\right)\le\exp\left(-\frac{t^{2}}{2\left(v+Kt\right)}\right).
\]
\end{lem}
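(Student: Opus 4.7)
The plan is to prove this via the standard exponential supermartingale technique, converting it into a maximal inequality using a stopping time argument. Fix $\lambda>0$ (to be optimized at the end) and set $\phi(x)=(e^{x}-1-x)/x^{2}$, which extends continuously to $\phi(0)=1/2$ and is increasing on $\mathbb{R}$. I would introduce the auxiliary process
\[
\randY(i) \;=\; \exp\!\Bigl(\lambda\bigl(\randX(i)-\randX(0)\bigr)\,-\,\lambda^{2}\phi(\lambda K)\,V(i)\Bigr).
\]

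The key step is to verify that $\randY$ is a supermartingale with respect to $(\mathcal{F}_{i})$. This rests on the elementary bound: for any real $y\le K$ one has $e^{\lambda y}\le 1+\lambda y+\lambda^{2}\phi(\lambda K)\,y^{2}$, because $y\mapsto(e^{\lambda y}-1-\lambda y)/y^{2}$ is non-decreasing. Applying this pointwise to $\Delta\randX(i)$ (which is bounded by $K$) and taking conditional expectation, the supermartingale hypothesis $\E[\Delta\randX(i)\mid\mathcal{F}_{i}]\le 0$ yields
\[
\E\bigl[e^{\lambda\Delta\randX(i)}\,\big|\,\mathcal{F}_{i}\bigr] \;\le\; \exp\!\Bigl(\lambda^{2}\phi(\lambda K)\,\E\bigl[\Delta\randX(i)^{2}\,\big|\,\mathcal{F}_{i}\bigr]\Bigr),
\]
and multiplying through by $\exp(\lambda(\randX(i)-\randX(0))-\lambda^{2}\phi(\lambda K)V(i+1))$ gives $\E[\randY(i+1)\mid\mathcal{F}_{i}]\le\randY(i)$.

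Now to extract a maximal inequality, define the stopping time $\tau=\inf\{i:\randX(i)\ge\randX(0)+t\text{ and }V(i)\le v\}$, capped at some deterministic horizon $T$. On the event that $\tau<\infty$ we have $\randY(\tau)\ge\exp(\lambda t-\lambda^{2}\phi(\lambda K)v)$, while optional stopping applied to the supermartingale $\randY$ gives $\E[\randY(\tau\wedge T)]\le\E[\randY(0)]=1$. Markov's inequality then yields
\[
\Pr\!\bigl(\tau\le T\bigr) \;\le\; \exp\!\Bigl(-\lambda t+\lambda^{2}\phi(\lambda K)\,v\Bigr) \;=\; \exp\!\Bigl(-\lambda t+(e^{\lambda K}-1-\lambda K)\tfrac{v}{K^{2}}\Bigr),
\]
and letting $T\to\infty$ by monotone convergence removes the cap. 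Finally, I would choose $\lambda=K^{-1}\log(1+Kt/v)$ and simplify using the standard numerical inequality $(1+x)\log(1+x)-x\ge x^{2}/(2+\tfrac{2}{3}x)$ (with $x=Kt/v$) to obtain the bound $\exp(-t^{2}/(2(v+Kt)))$.

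The main obstacle is verifying the supermartingale property of $\randY$; everything else is bookkeeping (Markov, optional stopping, and optimizing $\lambda$). In particular one must be careful that the pointwise bound $e^{\lambda y}\le 1+\lambda y+\lambda^{2}\phi(\lambda K)y^{2}$ is valid not just on $[-K,K]$ but on the full half-line $y\le K$, since a priori the martingale differences are only bounded in absolute value. A secondary subtlety is handling the ``for some $i$'' quantifier cleanly: this is why the stopping time $\tau$ is introduced, ensuring that we apply Markov at precisely the (random) time the event first occurs rather than at a fixed index.
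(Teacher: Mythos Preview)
The paper does not actually prove this lemma: it is simply quoted as Freedman's inequality with a citation to \cite[Theorem~1.6]{Fre75}, together with the parenthetical remark that the original statement for martingales extends to supermartingales with the same proof. So there is no in-paper argument to compare against.

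Your proposal is the standard proof of Freedman's inequality and is correct. The exponential supermartingale $\randY(i)=\exp\!\bigl(\lambda(\randX(i)-\randX(0))-\lambda^{2}\phi(\lambda K)V(i)\bigr)$ is exactly the right object, and your verification that it is a supermartingale via the pointwise bound $e^{\lambda y}\le 1+\lambda y+\lambda^{2}\phi(\lambda K)y^{2}$ for $y\le K$ (using monotonicity of $\phi$) together with $\E[\Delta\randX(i)\mid\mathcal{F}_{i}]\le 0$ is the standard computation. The stopping-time/optional-stopping step correctly handles the ``for some $i$'' quantifier, and your choice $\lambda=K^{-1}\log(1+Kt/v)$ with the Bennett-type inequality $(1+x)\log(1+x)-x\ge x^{2}/(2+2x/3)$ in fact yields the slightly sharper bound $\exp\!\bigl(-t^{2}/(2(v+Kt/3))\bigr)$, which of course implies the form stated in the lemma. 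One cosmetic remark: since the hypothesis already gives $|\Delta\randX(i)|\le K$, you only need the pointwise inequality on $[-K,K]$, not on all of $(-\infty,K]$; your caution there is harmless but unnecessary.
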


\begin{proof}[Proof of \ref{lem:triangle-removal-analysis}]
For a set $A$ of at most $h$ vertices, let $\randY_{A}\left(i\right)=\left|\bigcap_{w\in A}N_{\randG\left(i\right)}\left(w\right)\right|$.
Let $p\left(i\right)=\left(1-i/N\right)$ and let $p^{k}\left(i\right)=\left(1-i/N\right)^{k}$,
so that $p^{\left|A\right|}\left(i\right)n$ is the predicted trajectory
of each $\randY_{A}\left(i\right)$.

Fix some large $C$ and small $c$ to be determined. We will choose
$b<c/\left(C+1\right)$ so that $e\left(i\right):=p\left(i\right)^{-C}\varepsilon^{c}\le\varepsilon^{b}$
for $i\le N\left(1-\varepsilon^{b}\right)$. This means that if the
conditions
\begin{align*}
\randY_{A}\left(i\right) & \le p^{\left|A\right|}\left(i\right)n\left(1+e\left(i\right)\right),\\
\randY_{A}\left(i\right) & \ge p^{\left|A\right|}\left(i\right)n\left(1-e\left(i\right)\right),
\end{align*}
are satisfied for all $A$, then $\randG\left(i\right)$ is $\left(e\left(i\right),h\right)$-quasirandom
(therefore $\left(\varepsilon^{b},h\right)$-quasirandom).

Let $\randT'$ be the smallest index $i\ge m$ such that for some
$A$, the above equations are violated (let $\randT'=\infty$ if this
never happens). Let $\randT=\randT'\land N\left(1-\varepsilon^{b}\right)$.
Define the stopped processes 
\begin{align*}
\randY_{A}^{+}\left(i\right) & =\randY_{A}\left(i\land\randT\right)-p^{\left|A\right|}\left(i\land\randT\right)n\left(1+e\left(i\land\randT\right)\right),\\
\randY_{A}^{-}\left(i\right) & =-\randY_{A}\left(i\land\randT\right)+p^{\left|A\right|}\left(i\land\randT\right)n\left(1-e\left(i\land\randT\right)\right).
\end{align*}
We want to show that for each $A$ and each $s\in\left\{ +,-\right\} $,
the process $\randY_{A}^{s}=\left(\randY_{A}^{s}\left(i\right),\randY_{A}^{s}\left(i+1\right),\dots\right)$
is a supermartingale, and then we want to use \ref{lem:freedman}
and the union bound to show that a.a.s. each $\randY_{A}^{s}$ only
takes negative values.

To see that this suffices to prove \ref{lem:triangle-removal-analysis},
note that if $i<\randT$ then by \ref{prop:H-count} the number of
triangles in $\randG\left(i\right)$ is
\[
\randQ\left(i\right)=\left(1\pm O\left(e\left(i\right)\right)\right)\frac{p^{3}\left(i\right)n^{3}}{6}>0.
\]
This means $\randT\le\randM$, so the event that each $\randY_{A}^{s}$
only takes negative values contains the event that each $\randG\left(i\right)$
is non-frozen and sufficiently quasirandom for $i\le N\left(1-\varepsilon^{b}\right)$.

Let $\randR_{A}\left(i\right)=\bigcap_{w\in A}N_{\randG\left(i\right)}\left(w\right)$,
so that $\randY_{A}\left(i\right)=\left|\randR_{A}\left(i\right)\right|$.
Fix $A$, and consider $x\in\randR_{A}\left(i\right)$, for $i<\randT$.
The only way we can have $x\notin\randR_{A}\left(i+1\right)$ is if
we remove a triangle containing an edge $\left\{ x,w\right\} $ for
some $w\in A$. Now, for each $w\in A$, the number of triangles in
$\randG\left(i\right)$ containing the edge $\left\{ x,v\right\} $
is $\left(1\pm O\left(e\left(i\right)\right)\right)p^{2}\left(i\right)n$
by \ref{prop:H-count}. The number of triangles containing $x$ and
more than one vertex of $A$ is $O\left(1\right)$. So,
\begin{align*}
\Pr\left(x\notin\randR_{A}\left(i+1\right)\right) & =\frac{1}{\randQ\left(i\right)}\left(\sum_{w\in A}\left(1\pm O\left(e\left(i\right)\right)\right)p^{2}\left(i\right)n-O\left(1\right)\right)\\
 & =\left(1\pm O\left(e\left(i\right)\right)\right)\frac{\left|A\right|}{p\left(i\right)N}.
\end{align*}
For $i<\randT$ we have $\left|\randR_{A}\left(i\right)\right|=\left(1\pm e\left(i\right)\right)p^{\left|A\right|}\left(i\right)n$,
so by linearity of expectation
\begin{align*}
\E\left[\Delta\randY_{A}\left(i\right)\cond\randG\left(i\right)\right] & =-\left(1\pm O\left(e\left(i\right)\right)\right)\frac{\left|A\right|p^{\left|A\right|-1}\left(i\right)n}{N}\\
 & =-\frac{\left|A\right|p^{\left|A\right|-1}\left(i\right)n}{N}+O\left(\frac{e\left(i\right)p^{\left|A\right|-1}\left(i\right)}{n}\right).
\end{align*}
Note also that we have the bound $\Delta\randY_{A}\left(i\right)\le2=O\left(1\right)$
(with probability 1). Also, for fixed $k$, we have
\begin{align*}
\Delta p^{k}\left(i\right) & =\left(1-\frac{i+1}{N}\right)^{k}-\left(1-\frac{i}{N}\right)^{k}\\
 & =\left(1-\frac{i}{N}\right)^{k}\left(\left(\frac{N-i-1}{N-i}\right)^{k}-1\right)\\
 & =p^{k}\left(i\right)\left(\left(1-\frac{1}{N-i}\right)^{k}-1\right)\\
 & =p^{k}\left(i\right)\left(-\frac{k}{N-i}+O\left(\frac{1}{\left(N-i\right)^{2}}\right)\right)\\
 & =-\frac{kp^{k-1}\left(i\right)}{N}\left(1+O\left(\frac{p\left(i\right)}{n^{2}}\right)\right)\\
 & =-\frac{kp^{k-1}\left(i\right)}{N}+o\left(\frac{e\left(i\right)p^{k-1}\left(i\right)}{n^{2}}\right),
\end{align*}
and with $ep^{k}$ denoting the pointwise product $i\mapsto e\left(i\right)p^{k}\left(i\right)$,
we then have
\begin{align*}
\Delta\left(ep^{k}\right)\left(i\right) & =\varepsilon^{c}\Delta p^{k-C}\left(i\right)\\
 & =\varepsilon^{c}\Theta\left(\frac{\left(C-k\right)p^{k-C-1}\left(i\right)}{N}\right)\\
 & =\Theta\left(\frac{\left(C-k\right)e\left(i\right)p^{k-1}\left(i\right)}{n^{2}}\right).
\end{align*}
For large $C$ it follows that
\begin{align*}
\E\left[\Delta\randY_{A}^{+}\left(i\right)\cond\randG\left(i\right)\right] & =\E\left[\Delta\randY_{A}\left(i\right)\cond\randG\left(i\right)\right]-\Delta p^{\left|A\right|}\left(i\right)n-\Delta\left(ep^{\left|A\right|}\right)\left(i\right)n\le0,
\end{align*}
and similarly
\[
\E\left[\Delta\randY_{A}^{-}\left(i\right)\cond\randG\left(i\right)\right]\le0
\]
for $i<\randT$. (For $i\ge\randT$ we trivially have $\Delta\randY_{A}^{s}\left(i\right)=0$)
Since each $\randY_{A}^{s}$ is a Markov process, it follows that
each is a supermartingale. Now, we need to bound $\Delta\randY_{A}^{s}\left(i\right)$
and $\E\left[\left(\Delta\randY_{A}^{s}\left(i\right)\right)^{2}\cond\randG\left(i\right)\right]$,
which is easy given the preceding calculations. First, recalling that
$\Delta\randY_{A}\left(i\right)=O\left(1\right)$ and noting that
$\Delta p^{k}\left(i\right),\Delta\left(ep^{k}\right)\left(i\right)=O\left(1/N\right)$
we immediately have $\left|\Delta\randY_{A}^{s}\left(i\right)\right|=O\left(1\right)$.
Noting in addition that $\E\left[\Delta\randY_{A}\left(i\right)\cond\randG\left(i\right)\right]=O\left(1/n\right)$,
we have
\begin{align*}
\E\left[\left(\Delta\randY_{A}^{s}\left(i\right)\right)^{2}\cond\randG\left(i\right)\right] & =O\left(\E\left[\Delta\randY_{A}^{s}\left(i\right)\cond\randG\left(i\right)\right]\right)=O\left(\frac{1}{n}\right).
\end{align*}
Since $\randT\le N$, we also have
\[
\sum_{i=0}^{\infty}\E\left[\left(\Delta\randY_{A}^{s}\left(i\right)\right)^{2}\cond\randG\left(i\right)\right]=O\left(\frac{N}{n}\right)=O\left(n\right).
\]
Provided $c<1$ (and recalling that $\varepsilon<1/2$), applying
\ref{lem:freedman} with $t=e\left(m\right)p^{\left|A\right|}\left(m\right)n-\varepsilon p^{\left|A\right|}\left(m\right)n=\Omega\left(n\varepsilon^{c}\right)$
and $v=O\left(n\right)$ then gives
\[
\Pr\left(\randY_{A}^{s}\left(i\right)>0\mbox{ for some }i\right)\le\exp\left(-O\left(n\varepsilon^{2c}\right)\right).
\]
So, if $2c<\log_{\varepsilon}n\le a$, the union bound over all $A,s$
finishes the proof.
\end{proof}

\section{Perfect matchings via absorbers\label{sec:absorbers}}

In this section we prove \ref{thm:PM-in-STS} using \ref{lem:triangle-removal-transfer}
and the absorbing method. First, we define our absorbers, which are
small rooted hypergraphs that can contribute to a perfect matching
in two different ways. We are careful to make the definition in such
a way that absorbers can be shown to appear in $\rg n{\alpha N}$
with extremely high probability.
\begin{defn}
\label{def:absorber}An \emph{absorber} for an ordered triple $\left(x,y,z\right)$
is a set of hyperedges of the form
\[
\left\{ \left\{ x,x_{1},x_{2}\right\} ,\left\{ y,y_{1},y_{2}\right\} ,\left\{ z,z_{1},z_{2}\right\} ,\left\{ w_{x},w_{y},w_{z}\right\} ,\left\{ x_{1},y_{2},w_{z}\right\} ,\left\{ y_{1},z_{2},w_{x}\right\} ,\left\{ z_{1},x_{2},w_{y}\right\} \right\} .
\]
We call $x,y,z$ the \emph{rooted vertices }and we call the other
nine vertices the \emph{external vertices}. Also, we say the three
hyperedges containing rooted vertices are \emph{rooted hyperedges},
and the other four hyperedges containing only external vertices are
\emph{external hyperedges}. Note that an absorber has a perfect matching
on its full set of 12 vertices (we call this the \emph{covering }matching),
and it also has a perfect matching on its external vertices (we call
this the \emph{non-covering }matching). See \ref{fig:absorber}.
\end{defn}

\begin{figure}[h]
\begin{center}
\includegraphics{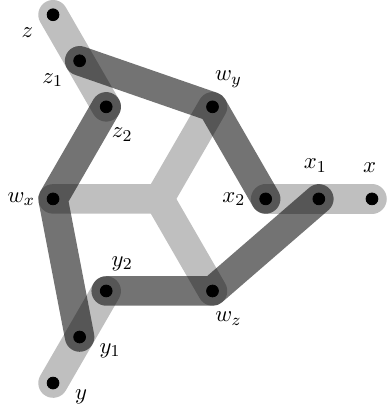}
\end{center}

\caption{\label{fig:absorber}An illustration of an absorber for $\left(x,y,z\right)$.
The light hyperedges are the covering matching and the dark hyperedges
are the non-covering matching.}
\end{figure}

Absorbers are the basic building blocks for a larger structure which
will eventually allow us to complete an almost-perfect matching into
a perfect matching. The relative positions of the absorbers in this
structure will be determined by a ``template'' with a ``resilient
matching'' property, as follows.
\begin{lem}
\label{lem:resilient-template}For any sufficiently large $n$, there
exists a 3-uniform hypergraph $T$ with $10n$ vertices, $120n$ hyperedges
and a set $Z$ of $2n$ vertices, such that if we remove any $n$
vertices from $Z$, the resulting hypergraph has a perfect matching.
We call $T$ a \emph{resilient template} and we call $Z$ its \emph{flexible
set.}
\end{lem}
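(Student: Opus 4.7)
The plan is to construct the resilient template $T$ by combining a Montgomery-style bipartite backbone with a filler hypergraph, and to verify the resilient matching property via Hall's theorem and the probabilistic method.

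First, I would partition the vertex set as $V(T) = Z \sqcup M \sqcup X$, where $|Z| = 2n$ is the designated flexible set, $|M| = n$ is a set of \emph{anchors}, and $|X| = 7n$ is a set of \emph{fillers} (the divisibility $3 \mid n$, which we may assume, ensures $|X|$ is divisible by $3$). The crucial structural ingredient is a bipartite auxiliary graph $B$ between $Z$ and $M$, with constant bounded degrees on both sides, satisfying the robust Hall condition: for every $n$-subset $Z' \subseteq Z$, the subgraph $B[Z' \cup M]$ contains a perfect matching saturating both $Z'$ and $M$. Such a $B$ exists with $O(n)$ edges; one proves this probabilistically by taking a random constant-degree bipartite graph, applying Hall's theorem together with expansion bounds on random bipartite graphs, and union-bounding over the $\binom{2n}{n} \le 4^{n}$ choices of $Z_0 = Z \setminus Z'$. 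For each edge $\{z,m\} \in B$ I would then add \emph{spine hyperedges} of the form $\{z, m, x\}$ for a constant number of carefully selected fillers $x \in X$, giving the spine some slack.

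Next, I would equip $X$ with internal \emph{filler hyperedges} so that, regardless of which $n$ filler vertices get consumed by the spine matching, the remaining $6n$ fillers still admit a perfect matching in $T$. This can be arranged via a constant-degree random $3$-uniform hypergraph on $X$: with positive probability, every residual $6n$-subset of $X$ that can arise from a valid Hall matching has a perfect matching in this random hypergraph. Combined with the additional flexibility coming from multiple spine candidates per edge of $B$, this gives enough robustness.

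Given both pieces, the perfect matching for any $Z_0 \subseteq Z$ with $|Z_0| = n$ assembles as follows: apply the robust Hall property of $B$ to select $n$ disjoint spine hyperedges covering $Z' \cup M$ together with $n$ specific fillers; then cover the remaining $6n$ filler vertices with $2n$ disjoint filler hyperedges. Tuning the constant degrees of $B$, the number of spine candidates per edge of $B$, and the filler density yields the required hyperedge count $|E(T)| \le 120n$. The main obstacle will be establishing the second probabilistic claim, namely that the filler hypergraph admits a perfect matching on \emph{every} $6n$-subset arising as a residual set: this needs tail bounds strong enough to beat a union bound over exponentially many subsets, which is typically handled by Pippenger--Spencer-type nibble arguments or a suitably quantitative use of the Johansson--Kahn--Vu theorem.
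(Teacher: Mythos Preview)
Your filler-hypergraph step has a genuine gap. You claim that a constant-degree random $3$-uniform hypergraph on the $7n$ fillers will, with positive probability, contain a perfect matching on every residual $6n$-subset that can arise. But there are exponentially many such subsets (at least $c^{n}$ if each of the $n$ chosen spine edges has $c$ filler options), so you would need the failure probability on a single subset to be $e^{-\Omega(n)}$. A bounded-degree random $3$-uniform hypergraph does not achieve this: the perfect-matching threshold for the binomial random $3$-uniform hypergraph sits at expected degree of order $\log N$ (this is precisely the content of Johansson--Kahn--Vu), and at constant degree a fixed vertex is isolated in the subhypergraph induced on a given $6n$-subset with probability bounded away from $0$. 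Pippenger--Spencer gives only near-perfect matchings, not perfect ones. The slack from having several spine candidates per edge of $B$ does not obviously rescue this either: choosing the spine fillers so as to leave a perfectly matchable residual set is itself a nontrivial robust-matching problem, and you have not indicated any mechanism for solving it.

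The paper sidesteps all of this with a much shorter construction. It quotes Montgomery's lemma as a black box to obtain a bipartite graph $R$ with parts $X$ (of size $3n$) and $Y\sqcup Z$ (with $|Y|=|Z|=2n$) and maximum degree $40$, such that deleting any $n$ vertices of $Z$ leaves a graph with a perfect matching. It then adds a third part $W$ with $|W|=3n$ together with a \emph{fixed} perfect matching between $W$ and $X$, and takes as hyperedges of $T$ all $3$-vertex paths through $W$, $X$ and $Y\sqcup Z$. Any perfect matching of the trimmed $R$ lifts, via the fixed $W$--$X$ bijection, to a perfect matching of $T$. The crucial point is that the third part requires no flexibility whatsoever --- it is just a clone of $X$ --- so there is no filler hypergraph and no second probabilistic argument. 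The counts $10n$ vertices and $120n$ hyperedges drop out immediately from $3n+2n+2n+3n$ and $40\cdot 3n$.
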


To prove \ref{lem:resilient-template} we use the following lemma
of Montgomery \cite[Lemma~2.8]{Mon14}
\begin{lem}
\label{lem:montgomery}For any sufficiently large $n$, there exists
a bipartite graph $R$ with vertex parts $X$ and $Y\sqcup Z$, with
$\left|X\right|=3n$, $\left|Y\right|=\left|Z\right|=2n$, and maximum
degree 40, such that if we remove any $n$ vertices from $Z$, the
resulting bipartite graph has a perfect matching.
\end{lem}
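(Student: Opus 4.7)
The plan is to construct $R$ probabilistically as a random bipartite biregular graph on $X$ and $Y \sqcup Z$ of prescribed degrees, and to verify Hall's marriage condition for $R[X, Y \cup W]$ simultaneously over all admissible $W$ by a union bound.

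Concretely, I would take $R$ uniformly at random among simple bipartite graphs on $X$ (size $3n$) and $Y \sqcup Z$ (size $4n$) in which every $x \in X$ has degree exactly $40$ and every $v \in Y \sqcup Z$ has degree exactly $30$; the identity $3n \cdot 40 = 4n \cdot 30$ makes this biregular configuration model well-defined, and simplicity holds with probability $\Omega(1)$ in the bounded-degree regime. The maximum-degree bound is then automatic. For fixed $W \subseteq Z$ with $|W| = n$, the induced bipartite graph $R[X, Y \cup W]$ has both parts of size $3n$, so by Hall's theorem it has a perfect matching unless there exist $S \subseteq X$ and $T \subseteq Y \cup W$ with $|S| + |T| \ge 3n + 1$ and no $R$-edges between $S$ and $T$. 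A direct configuration-model calculation bounds the probability of such a hole for fixed $S, T$ of sizes $s, t$ by roughly $\bigl((3n - s)/(3n)\bigr)^{30 t}$; summing over $(S, T)$ with the $\binom{3n}{s}\binom{3n}{t}$ prefactors and optimising the resulting entropy expression, the per-$W$ failure probability is at most $e^{-cn}$, where a routine numerical check with degrees $(40, 30)$ yields $c$ comfortably larger than $\log 4$.

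Finally, union-bounding over the $\binom{2n}{n} \le 4^n$ admissible subsets $W$ gives total failure probability at most $e^{-(c - \log 4) n} = o(1)$, so a deterministic $R$ with the required resilient-matching property exists for all sufficiently large $n$. The main obstacle is the critical regime where $|S|$ is close to $3n$ and the adversary chooses $W$ so as to delete as many vertices of $N_R(S) \cap Z$ as possible; biregularity ensures that at most a $1/2$ fraction of $N_R(S)$ lies in $Z$ (so at most $|N_R(S)|/2$ neighbours can be lost to any $W$), and this is comfortably beaten by the robust expansion $|N_R(S)| \ge (1 + \varepsilon) |S|$ of random biregular bipartite graphs, valid for $|S| \le (1 - \varepsilon) 3n$. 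Once these expansion estimates are written down, both the per-$W$ entropy optimisation and the final union bound are routine.
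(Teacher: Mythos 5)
Your construction fails, and the failure is at the opposite end of the ``critical regime'' you identified: single-vertex sets $S=\{x\}$. In a uniformly random biregular graph in which each $x\in X$ has degree $40$ into $Y\sqcup Z$ (with $|Y|=|Z|$), the $40$ neighbours of a fixed $x$ land in $Y$ or $Z$ essentially like independent fair coins, so $\Pr(N_R(x)\subseteq Z)\approx 2^{-40}$ and the expected number of $x\in X$ with $N_R(x)\subseteq Z$ is $\approx 3n\cdot 2^{-40}$. Once $n\gg 2^{40}$ this is $\gg 1$, and a second-moment argument shows the random graph a.a.s.\ contains such a vertex. Since $40\le n$, the adversary can take the $n$ removed vertices of $Z$ to include all of $N_R(x)$, isolating $x$, so Hall's condition fails already at $S=\{x\}$. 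No union bound can rescue this: the construction itself is a.a.s.\ defective for large $n$.

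The specific slip is the claim that ``biregularity ensures that at most a $1/2$ fraction of $N_R(S)$ lies in $Z$'': biregularity controls degrees on the $Y\sqcup Z$ side and hence averages, but imposes nothing on how an individual vertex's neighbourhood splits between $Y$ and $Z$. The per-$W$ estimate also does not hold up numerically: for $s=1$, $t=3n$, the tight calculation (from the $S$ side, not the $T$ side) gives a hole probability of about $4^{-40}$ per vertex, so the per-$W$ failure probability is already $\Theta\big(n\cdot 4^{-40}\big)$, which is not $o(1)$ for large $n$, much less $e^{-cn}$ with $c>\log 4$. Montgomery's actual construction, which the paper simply cites, is necessarily more structured than a bounded-degree random biregular graph; any repair of your approach must build in a guarantee that no $x\in X$ has all its neighbours in $Z$ (for instance by prescribing separately the degrees from $X$ into $Y$ and into $Z$), after which the expansion and union-bound analysis would have to be redone from scratch rather than being routine.
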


\begin{proof}[Proof of \ref{lem:resilient-template}]
Consider the bipartite graph $R$ from \ref{lem:montgomery} on the
vertex set $X\sqcup\left(Y\sqcup Z\right)$. Note that $R$ has at
most $40\left|X\right|=120n$ edges (we can assume it has exactly
$120n$ edges, because adding edges does not affect the resilient
matching property). Add a set $W$ of $\left|X\right|$ new vertices
and put a perfect matching between $W$ and $X$, to obtain a $10n$-vertex
tripartite graph $R'$. Now, define a hypergraph $T$ on the same
vertex set by putting a hyperedge for each 3-vertex path running through
all three parts of $R'$ (we call such paths \emph{special paths}).
Note that an edge in $R$ can be uniquely extended to a special path
in $R'$, so $T$ has $120n$ hyperedges. Moreover, a matching in
$R$ can always be extended to a vertex-disjoint union of special
paths in $R'$, so $T$ is a resilient template with flexible set
$Z$.
\end{proof}
Now we can describe our absorbing structure in its entirety.
\begin{defn}
An \emph{absorbing structure} is a 3-uniform hypergraph $H$ of the
following form. Consider a resilient template $T$ and put externally
vertex-disjoint absorbers on each hyperedge of $T$, introducing 9
new vertices for each. Then delete the edges of $T$. That is, the
template just describes the relative positions of the absorbers, its
hyperedges are not actually in the absorbing structure.
\end{defn}

Note that an absorbing structure with a flexible set of size $2n$
has $10n+9\times120n=O\left(n\right)$ vertices and $7\times120n=O\left(n\right)$
hyperedges. An absorbing structure $H$ has the same crucial property
as the resilient template $T$ that defines it: if we remove half
of the vertices of the flexible set then what remains of $H$ has
a perfect matching. Indeed, after this removal we can find a perfect
matching $M$ of $T$, then our perfect matching of $H$ can be comprised
of the covering matching of the absorber on each hyperedge of $M$
and the non-covering matching for the absorber on each other hyperedge
of $T$. The existence of an absorbing structure, in addition to some
very weak pseudorandomness conditions, allows us to find perfect matchings
in a Steiner triple system, as follows.
\begin{lem}
\label{lem:PM-via-absorbers}Consider a Steiner triple system $S$
with vertex set $V$, $\left|V\right|=n\equiv3\mod6$, satisfying
the following conditions for some $\delta=\delta\left(n\right)=o\left(1/\log n\right)$
and fixed $\beta>0$.
\begin{enumerate}
\item \label{cond:absorber}There is an absorbing structure $H$ in $S$
with a flexible set $Z$ of size $6\floor{\delta^{2}n}$.
\item \label{cond:degree}For at most $\delta n$ of the vertices $v\in V\backslash V\left(H\right)$,
we have $\left|\left\{ \left\{ x,y\right\} \subseteq Z:\left\{ v,x,y\right\} \in E\left(S\right)\right\} \right|<6\delta^{5}n$.
That is to say, few vertices have unusually low degree into the flexible
set $Z$, in $S$.
\item \label{cond:density}Every vertex subset $W\subseteq V$ with $\left|W\right|\ge3\delta^{5}n$
induces at least $\left(1-\beta\right)\left|W\right|^{3}/\left(6n\right)$
hyperedges.
\end{enumerate}
Then $S$ has
\begin{equation}
\left(\frac{n}{2e^{2}}\left(1-\beta-o\left(1\right)\right)\right)^{n/3}\label{eq:count-pm}
\end{equation}
perfect matchings.
\end{lem}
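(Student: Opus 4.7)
The approach uses the absorbing method. Let $V' = V \setminus V(H)$ and $\ell = \lfloor \delta^2 n \rfloor$; note $|V'| = n - O(\delta^2 n)$ and $|Z| = 6\ell$. The resilient template defining the absorbing structure guarantees that, for \emph{any} $Z_0 \subseteq Z$ with $|Z_0| = 3\ell$, the absorbing structure restricted to $V(H) \setminus Z_0$ has a perfect matching (use the covering matching on each absorber whose template edge belongs to the perfect matching of $T$ on $V(T)\setminus Z_0$, and the non-covering matching on every other absorber). Consequently, any partial matching $M^*$ in $S[V' \cup Z]$ that covers $V'$ together with exactly $3\ell$ vertices of $Z$ extends to a perfect matching of $S$, and distinct $M^*$ yield distinct perfect matchings of $S$. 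The task therefore reduces to lower-bounding the number of such $M^*$.

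To do this, I would use a random greedy matching process on $S[V' \cup Z]$: at each step pick a uniformly random hyperedge in the subgraph induced on the uncovered set $W$ and delete its vertices. Run for $t = (|V'\cup Z| - \eta n)/3$ steps with $\eta = \delta^2$, which is permissible since $\eta \log n \to 0$ by the hypothesis $\delta = o(1/\log n)$. Condition (3) ensures that whenever $|W| \geq 3\delta^5 n$, the number of hyperedges in $S[W]$ is at least $(1-\beta)|W|^3/(6n)$, so the number of ordered partial matchings of size $t$ produced this way is at least $\prod_{i=0}^{t-1}(1-\beta)(|V'\cup Z| - 3i)^3/(6n)$. Dividing by $t!$ and applying Stirling's approximation, this yields at least $\bigl(\tfrac{n}{2e^2}(1-\beta-o(1))\bigr)^{n/3}$ unordered partial matchings: the ``missing'' final $\eta n$ steps contribute only a factor $\exp(o(n))$, which is absorbed into the $(1-\beta-o(1))^{n/3}$.

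The final step is to convert each bulk partial matching $M_0$ into a valid $M^*$ by extending into the residual set $R = (V' \cup Z) \setminus V(M_0)$, of size $|R| = \eta n \geq 3\delta^5 n$. Condition (3) keeps $S[R]$ dense, so a standard near-perfect matching tool (R\"odl nibble, or Alon\textendash Kim\textendash Spencer) gives a matching in $S[R]$ covering all but a vanishing fraction of $R$. Condition (2) then supplies enough hyperedges of mixed $V'$\textendash $Z$ type to cover any remaining $V'$-vertices using $Z$-vertices; and one adjusts the breakdown of hyperedge types used in the finishing phase (patterns $V'V'V'$, $V'V'Z$, $V'ZZ$, $ZZZ$) to hit the target of exactly $3\ell$ used $Z$-vertices overall. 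The \emph{main obstacle} is this finishing step: one must simultaneously cover all leftover $V'$-vertices and meet the exact count $3\ell$ on $Z$, so that the absorbing structure can be applied. The large slack $|Z| = 6\ell \gg |R \cap V'|$ together with conditions (2) and (3) should afford enough flexibility to complete at least an $\exp(-o(n))$ fraction of the bulk matchings, which suffices to retain the desired lower bound.
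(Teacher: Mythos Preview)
Your overall architecture is right, but the bulk step as written cannot be completed. You run the greedy process on $S[V'\cup Z]$ down to a residual $R$ of size $\eta n=\delta^2 n\approx\ell$. Since $|Z|=6\ell$, this means at most $\ell$ vertices of $Z$ lie in $R$, so the bulk matching has already covered at least $5\ell$ vertices of $Z$. But the target is exactly $3\ell$ covered $Z$-vertices, and you cannot un-cover vertices in the finishing phase; the ``large slack $|Z|=6\ell\gg|R\cap V'|$'' you invoke points in the wrong direction. No adjustment of hyperedge types in the residual can repair this, because the overshoot has already occurred. (Choosing a larger $\eta$ does not help either: a random greedy process on $V'\cup Z$ consumes $Z$-vertices at the same rate as $V'$-vertices, so by the time only $O(\delta^2 n)$ vertices remain, almost all of $Z$ is gone in expectation.)

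The paper avoids this by running the main greedy step \emph{only on $V'=V\setminus V(H)$}, never touching $Z$, and stopping when $3\lfloor\delta^5 n\rfloor$ vertices of $V'$ remain. It also first greedily covers the exceptional set $U$ of low-degree vertices (a step you omit, but which is needed because condition~(2) says nothing about vertices in $U$). After the main step, each leftover $V'$-vertex is covered by a hyperedge with two vertices in $Z$ (condition~(2)); this uses at most $6\lfloor\delta^5 n\rfloor\ll 3\ell$ vertices of $Z$, so one is safely under the target. Finally, condition~(3) applied inside $Z$ lets one greedily add $ZZZ$-hyperedges until exactly $3\ell$ vertices of $Z$ are covered. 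No nibble or Alon--Kim--Spencer argument is needed; condition~(3) alone drives the greedy process all the way down to the $3\delta^5 n$ threshold, and the counting goes through exactly as you computed.
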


\begin{proof}
We first describe a procedure to build a perfect matching (provided
$n$ is sufficiently large), then we count the number of ways to perform
this procedure.

Let $U\subseteq V\backslash V\left(H\right)$ be the set of vertices
with unusually low degree into $Z$, as per \ref{cond:degree}. The
first few hyperedges of our matching will cover $U$, and will not
use any vertices of $H$. We can in fact choose these hyperedges one-by-one
in a greedy fashion: considering each $v\in U$ in any order, note
that $v$ is in $\left(n-1\right)/2$ hyperedges of the Steiner triple
system $S$, and at most $3\left|U\right|+v\left(H\right)\le18\delta n+3\left(10+9\times120\right)\floor{\delta^{2}n}=o\left(n\right)$
of these hyperedges involve a vertex of $H$ or a vertex in the hyperedges
chosen so far.

Now, let $n'\ge n-v\left(H\right)-3\left|U\right|\ge n-19\delta n$
be the number of vertices in $V\backslash V\left(H\right)$ remaining
unmatched. Note that $v\left(H\right)=3\left(10+9\times120\right)\floor{\delta^{2}n}$
is divisible by 3, so $n'$ is divisible by 3 as well. We next use
\ref{cond:density} to repeatedly choose a hyperedge induced by the
remaining unmatched vertices in $V\backslash V\left(H\right)$ until
there are only $3\floor{\delta^{5}n}$ such vertices remaining unmatched.
(This means we are choosing $m=\left(n'-3\floor{\delta^{5}n}\right)/3\ge n/3-20\delta n$
hyperedges). We call this step the \emph{main step}.

Next, we greedily extend our matching to cover the remaining vertices
in $V\backslash V\left(H\right)$. Considering each uncovered $v\in V\backslash V\left(H\right)$
in any order, recall that $v\notin U$ so by \ref{cond:degree} there
are at least $6\delta^{5}n$ hyperedges of $S$ containing $v$ and
two vertices of $Z$. We can therefore choose such a hyperedge avoiding
the (fewer than $2\times3\floor{\delta^{5}n}$) vertices in $Z$ used
so far, to extend our matching. We have now covered all of $V\backslash V\left(H\right)$
and $6\floor{\delta^{5}n}$ vertices of $Z$; we can then repeatedly
apply \ref{cond:density} to the uncovered vertices in $Z$ to extend
our matching to cover half of $Z$. By the crucial property of an
absorbing structure, we can find a perfect matching on the remaining
vertices, completing our perfect matching of $S$.

Now we analyse the number of ways to perform the above procedure.
It actually suffices to count the number of ways to make the ordered
sequence of choices in the main step, which is at least
\begin{align}
\prod_{i=1}^{m}\left(1-\beta\right)\frac{\left(n'-3i\right)^{3}}{6n} & =\left(\frac{\left(1-\beta\right)\left(n'\right)^{3}}{6n}\right)^{m}\exp\left(\sum_{i=1}^{m}3\log\left(1-3\frac{i}{n'}\right)\right)\label{eq:num-matchings}\\
 & \ge\left(\left(1-\beta-3\times19\delta\right)\frac{n^{2}}{6}\right)^{n/3}n^{-20\delta n}\exp\left(n'\sum_{i=1}^{m}\frac{3}{n'}\log\left(1-3\frac{i}{n'}\right)\right).\nonumber 
\end{align}
Now, noting that $\int\log s\d s=s\left(\log s-1\right)$, we have
the Riemann sum approximation
\begin{align*}
\sum_{i=1}^{m}\frac{3}{n'}\log\left(1-3\frac{i}{n'}\right) & =\int_{0}^{m/n'}3\log\left(1-3t\right)\d t+o\left(1\right)\\
 & =\int_{0}^{1/3-o\left(1\right)}3\log\left(1-3t\right)\d t+o\left(1\right)\\
 & =\int_{o\left(1\right)}^{1}\log s\d s+o\left(1\right)\\
 & =-1+o\left(1\right).
\end{align*}
So, the expression in \ref{eq:num-matchings} is at least
\begin{align*}
\left(\left(1-\beta+O\left(\delta\right)\right)\frac{n^{2}}{6}\right)^{n/3}n^{O\left(\delta n\right)}e^{-n-o\left(n\right)} & =\left(\left(1-\beta+o\left(1\right)\right)\frac{n^{2}}{6e^{3}}\right)^{n/3}.
\end{align*}
(Recall that we are assuming $\delta=o\left(1/\log n\right)$). This
is a lower bound for the number of \emph{ordered} perfect matchings
in $S$. So, we divide by $\left(n/3\right)!=\left(\left(1+o\left(1\right)\right)n/3e\right)^{n/3}$
(using Stirling's approximation) to obtain \ref{eq:count-pm}.
\end{proof}

\subsection{Absorbing conditions in the triangle removal process\label{sec:absorbers-in-TRP}}

In this section we prove that the conditions in \ref{lem:PM-via-absorbers}
(for say $\delta=1/\log^{2}n$ and arbitrarily small $\beta>0$) hold
in a random Steiner triple system, proving \ref{thm:PM-in-STS}. We
do this using \ref{lem:triangle-removal-transfer}, showing that the
same properties hold with probability $1-\exp\left(-\tilde{\Omega}\left(n^{2}\right)\right)$
in the triangle removal process. (A tilde over asymptotic notation
indicates that polylogarithmic factors are being ignored).

Fix a large constant $h\in\NN$ (we will see later exactly how large
it should be), and fix small $\alpha>0$ (which we will assume is
small enough to satisfy certain inequalities later in the proof).
Fix a set $Z$ of $6\floor{\delta^{2}n}$ vertices (say $Z=\range{6\floor{\delta^{2}n}}$);
we will eventually find an absorbing structure with $Z$ as a flexible
set.

\subsubsection{High degree into the flexible set\label{subsec:degree}}

If \ref{cond:degree} is violated, there is a set $W$ of $\floor{\delta n}$
vertices outside $Z$ each with degree less than $6\delta^{5}n$ into
$Z$. There are then fewer than $6\delta^{6}n^{2}$ hyperedges with
one vertex in $W$ and two vertices in $Z$. We show that it is extremely
unlikely that there is a set $W$ with this property.

We will use \ref{lem:bite-transfer}, so let $\randS^{*}\sim\GSnp n{\alpha/n}$
be obtained from $\randG\sim\Gnp n{\alpha/n}$. Consider a set $W$
of $\floor{\delta n}$ vertices outside $Z$. Let $\randY$ be the
number of hyperedges of $\randS^{*}$ with one vertex in $W$ and
two vertices in $Z$. That is to say, $\randY$ is the number of such
hyperedges in $\randG$ that are \emph{isolated} in the sense that
they do not intersect any other hyperedge of $\randG$ in more than
one vertex. There are $\Theta\left(\left(\delta n\right)\left(\delta^{2}n\right)^{2}\right)=\Theta\left(\delta^{5}n^{3}\right)$
possible hyperedges and each is present and isolated with probability
$\left(\alpha/n\right)\left(1-\alpha/n\right)^{O\left(n\right)}=\Theta\left(n^{-1}\right)$,
so $\E\randY=\Theta\left(\delta^{5}n^{2}\right)$. Now, adding a hyperedge
to $\randG$ can increase $\randY$ by at most 1, and removing a hyperedge
can increase $\randY$ by at most 3 (by making three hyperedges isolated).
So, by \ref{thm:bernstein-type},
\[
\Pr\left(\randY\le2\delta^{6}n^{2}\right)\le\Pr\left(\left|\randY-\E\randY\right|\ge\Theta\left(\delta^{5}n^{2}\right)\right)\le\exp\left(-\Omega\left(\frac{\left(\delta^{5}n^{2}\right)^{2}}{3^{2}{n \choose 3}\alpha/n+3\delta^{5}n^{2}}\right)\right)=\exp\left(-\tilde{\Omega}\left(n^{2}\right)\right).
\]
Since there are no more than $2^{n}$ choices for $W$, we can use
the union bound, \ref{lem:bite-transfer} (with $S=\varnothing$)
and \ref{lem:triangle-removal-transfer} (with no conditioning; that
is, $\mathcal{Q}=\ord_{\alpha N}\cup\left\{ *\right\} $) to prove
that \ref{cond:degree} of \ref{lem:PM-via-absorbers} holds a.a.s.
in a random Steiner triple system.

\subsubsection{Density in subsets\label{subsec:density}}

Now we deal with \ref{cond:density}. It is not immediately clear
that one can consider just the first few hyperedges of a random Steiner
triple system as in \ref{subsec:degree}, but the key observation
is that in a random \emph{ordered} Steiner triple system, by symmetry
the first $\alpha N$ hyperedges have the same distribution as the
hyperedges corresponding to any other choice of $\alpha N$ indices.

With $\randS^{*}$ and $\randG$ as in \ref{subsec:degree}, consider
a set $W\subseteq V$ with $\left|W\right|\ge3\delta^{5}n$ and redefine
$\randY$ to be the number of hyperedges of $\randS^{*}$ included
in $W$ (which is the number of such hyperedges in $\randG$ that
are isolated). There are $\left(1+o\left(1\right)\right)\left|W\right|^{3}/6$
possible hyperedges, and each is present and isolated in $\randG$
with probability $\left(\alpha/n\right)\left(1-\alpha/n\right)^{O\left(n\right)}=\left(\alpha/n\right)\left(1-O\left(\alpha\right)\right)$.
Reasoning as in \ref{subsec:degree}, with probability $1-\exp\left(-\tilde{\Omega}\left(n^{2}\right)\right)$
we have $\randY\ge\alpha\left(1-O\left(\alpha\right)\right)\left|W\right|^{3}/\left(6n\right)$.
The union bound, \ref{lem:bite-transfer} and \ref{lem:triangle-removal-transfer}
prove that if $\randS$ is a random Steiner triple system, then a.a.s.
every appropriate subset $W$ induces at least $\alpha\left(1-O\left(\alpha\right)\right)\left|W\right|^{3}/\left(6n\right)$
hyperedges in $\randS_{\alpha N}$. By symmetry this property also
holds a.a.s. in $\randS_{k\alpha N}\backslash\randS_{\left(k-1\right)\alpha N}$
for each $k\le1/\alpha$. So, a.a.s. every $W$ induces a total of
$\left(1-O\left(\alpha\right)\right)\left|W\right|^{3}/\left(6n\right)$
hyperedges in $\randS$. For $\beta$ a large multiple of $\alpha$,
\ref{cond:density} of \ref{lem:PM-via-absorbers} is then satisfied.

\subsubsection{\label{subsec:absorbers}Absorbers}

Finally we show how to find an absorbing structure for \ref{cond:absorber},
which is much more involved. The first step is to show that there
are many absorbers rooted on every triple of vertices. We cannot hope
to do this by na\"ively analysing $\Gnp n{\alpha/n}$ and using \ref{lem:triangle-removal-transfer}
as in \ref{subsec:degree,subsec:density}, because the probability
that a vertex is isolated is already too large. Instead we must use
\ref{lem:triangle-removal-transfer} in its full generality, conditioning
on the quasirandomness of the first few steps of the triangle removal
process.

Let $a$ be small enough for \ref{lem:triangle-removal-transfer},
and let 
\[
\mathcal{Q}=\left\{ *\right\} \cup\left\{ S\in\ordm{2\alpha N}:S_{\alpha N}\in\cordm{n^{-a},h}{\alpha N}\right\} \supseteq\cordm{n^{-a},h}{2\alpha N}.
\]
Let $\randS\sim\rg n{2\alpha N}$, and condition on any $\randS_{\alpha N}=S\in\cordm{n^{-a},h}{\alpha N}$.
We will use \ref{lem:bite-transfer} to analyse $\randS\backslash\randS_{\alpha N}\sim\rg S{\alpha N}$
via $\GSnp S{\alpha/n}$. So, let $\randS^{*}\sim\GSnp S{\alpha/n}$
be obtained from $\randG\sim\Gnp S{\alpha/n}$.

By quasirandomness, every vertex has degree $\left(1\pm n^{-a}\right)\left(1-\alpha\right)n$
in $G\left(S\right)$, so every vertex is in $\left(1\pm n^{-a}\right)\alpha n/2=\Omega\left(\alpha n\right)$
hyperedges of $S$. Consider vertices $x,y,z$. Say an \emph{absorber-extension}
is a collection of four hyperedges which can be combined with three
hyperedges of $S$ incident to $x,y,z$, to form an absorber on $\left(x,y,z\right)$.
($S$ provides the rooted hyperedges of an absorber, and an absorber-extension
provides the external hyperedges). Let $\randY$ be the maximum size
of a hyperedge-disjoint collection of absorber-extensions in $\randS^{*}$;
equivalently, $\randY$ is the maximal size of a collection of disjoint
isolated absorber-extensions in $\randG$. This particular choice
of random variable is crucial, and allows us to use \ref{thm:bernstein-type}.
(The idea comes from a similar random variable used by Bollob\'as
\cite{Bol88}). Adding a hyperedge to $\randG$ can increase the size
of a maximal collection of hyperedge-disjoint absorber-extensions
by at most one, and removing a hyperedge can cause at most three hyperedge-disjoint
absorber-extensions to become isolated. So, changing the presence
of a hyperedge in $\randG$ can change $\randY$ by at most 3, as
in \ref{subsec:degree,subsec:density}.
\begin{claim}
\label{claim:expected-absorbers}If $h$ is large enough and $\alpha$
is small enough, then $\E\randY=\Omega\left(n^{2}\right)$.
\end{claim}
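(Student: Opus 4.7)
The plan is to count isolated absorber-extensions in $\randG\in\Gnp S{\alpha/n}$ and convert the count into a hyperedge-disjoint subcollection by a one-step deletion argument. Let $\randZ$ be the number of isolated absorber-extensions for $(x,y,z)$ in $\randG$ (equivalently, the number of absorber-extensions contained in $\randS^{*}$), and let $\randZ_{2}$ be the number of unordered pairs of distinct such extensions that share at least one hyperedge. Iteratively removing one extension from each surviving conflicting pair shows $\randY\ge\randZ-\randZ_{2}$, so it suffices to prove $\E[\randZ]=\Omega(n^{2})$ while $\E[\randZ_{2}]\le\tfrac{1}{2}\E[\randZ]$, for $\alpha$ a sufficiently small absolute constant.

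For $\E[\randZ]$: by the $(n^{-a},h)$-quasirandomness of $G(S)$ each of $x,y,z$ lies in $(1\pm o(1))\alpha n/2$ hyperedges of $S$, yielding $\Theta(\alpha^{3}n^{3})$ triples of rooted hyperedges $\{x,x_{1},x_{2}\}$, $\{y,y_{1},y_{2}\}$, $\{z,z_{1},z_{2}\}$. For all but an $o(1)$-fraction of these triples the nine vertices are pairwise distinct, and for each such triple all but an $O(\alpha/n)$-fraction of the ordered triples $(w_{x},w_{y},w_{z})$ of remaining vertices produce four pairwise-distinct external hyperedges that do not conflict with $S$. This gives $\Omega(\alpha^{3}n^{6})$ valid potential absorber-extensions for $(x,y,z)$. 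For each such extension, the four external hyperedges lie in $\randG$ with probability $(\alpha/n)^{4}$, and are isolated in $\randG$ with probability $(1-\alpha/n)^{O(n)}=\Omega(1)$, because each of the four has $O(n)$ conflicting positions in $G(S)$, each occupied independently with probability $\alpha/n$. Hence $\E[\randZ]=\Omega(\alpha^{7}n^{2})$.

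For $\E[\randZ_{2}]$ the critical input is a codegree bound: every hyperedge $e$ lies in $O(\alpha n^{3})$ valid potential absorber-extensions for $(x,y,z)$, via a short case analysis on which of the four roles $e$ plays. In the ``hub'' role $e=\{w_{x},w_{y},w_{z}\}$ (six orderings) the three rooted hyperedges contribute $\Theta((\alpha n)^{3})$ choices. In each of the three ``bridge'' roles (say $e=\{x_{1},y_{2},w_{z}\}$) the Steiner property of $S$ forces $x_{2}$ uniquely from $(x,x_{1})$ and $y_{1}$ uniquely from $(y,y_{2})$, leaving only $(z_{1},z_{2})$ (with $\Theta(\alpha n)$ choices from $S$) and the free pair $(w_{x},w_{y})$ (with $\Theta(n^{2})$ choices), for a total of $O(\alpha n^{3})$. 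Summing this codegree bound over the four hyperedges of each of the $O(\alpha^{3}n^{6})$ valid potential absorber-extensions, the number of pairs of valid extensions sharing at least one hyperedge is $O(\alpha^{4}n^{9})$, with the dominant contribution to $\E[\randZ_{2}]$ coming from pairs whose union has exactly seven hyperedges. Hence
\[
\E[\randZ_{2}]=O\!\left(\alpha^{4}n^{9}\cdot(\alpha/n)^{7}\right)+\text{lower order}=O(\alpha^{11}n^{2}),
\]
which is at most $\tfrac{1}{2}\E[\randZ]$ once $\alpha$ is small, giving $\E[\randY]\ge\E[\randZ]-\E[\randZ_{2}]=\Omega(\alpha^{7}n^{2})=\Omega(n^{2})$.

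The main obstacle is precisely the codegree bound in the bridge case: without using the Steiner rigidity of $S$ to force $x_{2}$ (and $y_{1}$) uniquely from $(x,x_{1})$ (respectively $(y,y_{2})$), the bridge count would balloon by a factor of $n$, $\E[\randZ_{2}]$ would swamp $\E[\randZ]$, and the deletion argument would produce no useful lower bound on $\randY$.
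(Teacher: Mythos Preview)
Your overall strategy matches the paper's proof exactly: define $\randX$ (your $\randZ$) as the number of isolated absorber-extensions, define $\randZ$ (your $\randZ_2$) as the number of hyperedge-intersecting pairs, and use $\randY\ge\randX-\randZ$. Your treatment of $\E[\randZ_2]$ via a codegree bound is a clean repackaging of the paper's case analysis and reaches the same $O(\alpha^{11}n^2)$.

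There is, however, a genuine gap in your lower bound for $\E[\randZ]$. You claim that for every rooted triple $\{x,x_1,x_2\},\{y,y_1,y_2\},\{z,z_1,z_2\}\in S$ with nine distinct vertices, all but an $O(\alpha/n)$-fraction of triples $(w_x,w_y,w_z)$ yield four external hyperedges that do not conflict with $S$. This is false. ``Not conflicting with $S$'' means every pair inside each external hyperedge must lie in $G(S)$; in particular the pairs $\{x_1,y_2\}$, $\{y_1,z_2\}$, $\{z_1,x_2\}$ must be uncovered by $S$. These three constraints are determined entirely by the rooted triple and have nothing to do with $(w_x,w_y,w_z)$: if any one of them fails (which happens for a $\Theta(\alpha)$-fraction of rooted triples, not $o(1)$), then \emph{no} choice of $(w_x,w_y,w_z)$ works. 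Your $O(\alpha/n)$ figure suggests you are reading ``conflicts with $S$'' as ``is itself a hyperedge of $S$'', which is the wrong notion here. Even for good rooted triples, the bad fraction of $(w_x,w_y,w_z)$ is $O(\alpha)$, not $O(\alpha/n)$, since there are nine further pair-in-$G(S)$ constraints each failing with probability $\Theta(\alpha)$.

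What is missing is a proof that $\Omega((\alpha n)^3)$ rooted triples satisfy $\{x_1,y_2\},\{y_1,z_2\},\{z_1,x_2\}\in G(S)$. This is not immediate from the common-neighbourhood quasirandomness hypothesis, because it asks for edges between three specific $\Theta(\alpha n)$-sized sets of ordered pairs. The paper supplies this via \ref{lem:new-quasirandom}, which invokes the Chung--Graham--Wilson equivalence to convert $(\varepsilon,2)$-quasirandomness into control over edge counts between arbitrary linear-sized sets. Once the rooted triples are filtered through \ref{lem:new-quasirandom}, the count of compatible $(w_x,w_y,w_z)$ is a genuine rooted extension count in $G(S)$ and follows from \ref{prop:H-count} (this is where the requirement ``$h$ large enough'' enters). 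So the obstacle is not the bridge-case codegree bound you flag at the end, but rather the lower bound on the number of admissible rooted triples.
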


To prove \ref{claim:expected-absorbers} we'll need a simple lemma
about quasirandom graphs.
\begin{lem}
\label{lem:new-quasirandom}Suppose $G$ is an $n$-vertex $\left(o\left(1\right),2\right)$-quasirandom
graph with density $\gamma$ satisfying $\gamma,1-\gamma=\Omega\left(1\right)$.
Consider sets $X,Y,Z$ of disjoint ordered pairs of vertices of $G$,
such that $\left|X\right|,\left|Y\right|,\left|Z\right|=\Omega\left(n\right)$.
Then there are at least $\gamma^{3}\left|X\right|\left|Y\right|\left|Z\right|/8-o\left(n^{3}\right)$
choices of $\left(x_{1},x_{2}\right)\in X$, $\left(y_{1},y_{2}\right)\in Y$,
$\left(z_{1},z_{2}\right)\in Z$ such that $\left\{ x_{1},y_{2}\right\} $,
$\left\{ y_{1},z_{2}\right\} $ and $\left\{ z_{1},x_{2}\right\} $
are all edges of $G$.
\end{lem}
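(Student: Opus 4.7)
The plan is to lower-bound the triple sum
\[
T=\sum_{(x_1,x_2)\in X,\,(y_1,y_2)\in Y,\,(z_1,z_2)\in Z}\one\bigl[\{x_1,y_2\},\,\{y_1,z_2\},\,\{z_1,x_2\}\in E(G)\bigr]
\]
by first summing over $(y_1,y_2)\in Y$. For a fixed $(y_1,y_2)$ set
\[
X'=\{(x_1,x_2)\in X:x_1\in N_G(y_2)\},\qquad Z'=\{(z_1,z_2)\in Z:z_2\in N_G(y_1)\},
\]
and let $X'_2=\{x_2:(x_1,x_2)\in X'\}$ and $Z'_1=\{z_1:(z_1,z_2)\in Z'\}$. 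Since the pairs within $X$ (resp.\ $Z$) are pairwise disjoint, $|X'_2|=|X'|$ and $|Z'_1|=|Z'|$, and the inner count over $X\times Z$ is exactly $e(X'_2,Z'_1)$, the number of edges of $G$ between $X'_2$ and $Z'_1$.

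The first ingredient I need is that $(o(1),2)$-quasirandomness implies edge-discrepancy: for any $A,B\subseteq V(G)$ with $|A|,|B|=\Omega(n)$, one has $e(A,B)=\gamma|A||B|+o(n^2)$. This is the classical implication from the Chung--Graham--Wilson equivalences, obtained by bounding $\sum_{u,v\in V(G)}\bigl(|N_G(u)\cap N_G(v)|-\gamma^2 n\bigr)^2=o(n^4)$ using the codegree hypothesis, followed by a short convexity argument.

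The second ingredient is a variance argument. Writing $X_1=\{x_1:(x_1,x_2)\in X\}$, the codegree condition gives $\sum_v|N_G(v)\cap X_1|^2=(1+o(1))\gamma^2n|X_1|^2$ while $\sum_v|N_G(v)\cap X_1|=(1+o(1))\gamma n|X_1|$, so by Chebyshev all but $o(n)$ vertices $v$ satisfy $|N_G(v)\cap X_1|=(1\pm o(1))\gamma|X_1|$; the analogous statement with $X_1$ replaced by $Z_2=\{z_2:(z_1,z_2)\in Z\}$ holds as well. Combining the two, for all but $o(n)$ ``typical'' pairs $(y_1,y_2)\in Y$ I obtain both $|X'_2|=(1\pm o(1))\gamma|X|$ and $|Z'_1|=(1\pm o(1))\gamma|Z|$, and in particular both are $\Omega(n)$.

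For each typical $(y_1,y_2)$, edge-discrepancy then gives $e(X'_2,Z'_1)=(1+o(1))\gamma|X'_2||Z'_1|=(1+o(1))\gamma^3|X||Z|$, and summing over the $|Y|-o(n)$ typical pairs yields $T\ge(1-o(1))\gamma^3|X||Y||Z|$---substantially stronger than the claimed $\gamma^3|X||Y||Z|/8-o(n^3)$, so the factor $1/8$ is pure slack. The contribution of the atypical $(y_1,y_2)$, bounded trivially by $o(n)\cdot|X||Z|=o(n^3)$, is absorbed into the error term. The main obstacle is not any individual step (each is standard) but choosing the right variable to sum over first: fixing $(y_1,y_2)$ turns the pairing constraint on $Y$ into plain set-membership constraints on $X$ and $Z$, after which edge-discrepancy applies directly.
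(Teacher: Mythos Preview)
Your proof is correct and follows essentially the same strategy as the paper's: invoke the Chung--Graham--Wilson equivalence for edge discrepancy, fix one of the three pairs, restrict the other two sets via a degree-into-set argument, and then count edges between the resulting sets. The only difference is that where the paper uses a crude averaging argument to obtain a one-sided degree lower bound $\gamma|B|/2-o(n)$ (whence the factor $1/8=(1/2)^3$), you use a second-moment/Chebyshev computation to get the two-sided estimate $(1\pm o(1))\gamma|B|$, yielding the sharper conclusion $(1-o(1))\gamma^3|X||Y||Z|$; as you observe, the $1/8$ is pure slack.
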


\begin{proof}
This quasirandomness condition implies quasirandomness in the Chung\textendash Graham\textendash Wilson
sense (see for example \cite[Theorem~9.3.2]{AS04}). In particular,
for any (not necessarily disjoint) vertex sets $A,B$, there are at
least $\gamma\left|A\right|\left|B\right|/2-o\left(n^{2}\right)$
edges between them, and all but $o\left(n\right)$ vertices of $A$
have at least $\gamma\left|B\right|/2-o\left(n\right)$ neighbours
in $B$. (To prove this latter fact, note that if for any $\varepsilon=\Omega\left(1\right)$
there were a set $A'\subseteq A$ of $\varepsilon n$ vertices that
each had $\gamma\left|B\right|-\varepsilon n$ neighbours in $B$,
then there would be at most $\gamma\left|A'\right|\left|B\right|/2-\varepsilon^{2}n^{2}$
edges between $A'$ and $B$, contradicting quasirandomness).

So, for $\left|X\right|-o\left(n\right)$ choices of $\left(x_{1},x_{2}\right)\in X$,
there are subsets $Y_{x_{1},x_{2}}\subseteq Y$, $Z_{x_{1},x_{2}}\subseteq Z$
with $\left|Y_{x_{1},x_{2}}\right|=\left(\gamma\left|Y\right|/2-o\left(n\right)\right)$
and $\left|Z_{x_{1},x_{2}}\right|=\left(\gamma\left|Z\right|/2-o\left(n\right)\right)$,
such that $\left\{ x_{1},y_{2}\right\} $ and $\left\{ z_{1},x_{2}\right\} $
are edges of $G$ for each $\left(y_{1},y_{2}\right)\in Y_{x_{1},x_{2}}$,
$\left(z_{1},z_{2}\right)\in Z_{x_{1},x_{2}}$. For each such choice
of $x_{1},x_{2}$, there are $\gamma\left|Y_{x_{1},x_{2}}\right|\left|Z_{x_{1},x_{2}}\right|/2-o\left(n^{2}\right)$
choices of $\left(y_{1},y_{2}\right)\in Y_{x_{1},x_{2}}$, $\left(z_{1},z_{2}\right)\in Z_{x_{1},x_{2}}$
such that $\left\{ y_{1},z_{2}\right\} $ is an edge of $G$. In summary,
there are at least
\[
\left(\left|X\right|-o\left(n\right)\right)\left(\gamma\left(\gamma\left|Y\right|/2-o\left(n\right)\right)\left(\gamma\left|Z\right|/2-o\left(n\right)\right)/2-o\left(n^{2}\right)\right)=\gamma^{3}\left|X\right|\left|Y\right|\left|Z\right|/8-o\left(n^{3}\right)
\]
suitable choices of $\left(x_{1},x_{2}\right)\in X,\left(y_{1},y_{2}\right)\in Y,\left(z_{1},z_{2}\right)\in Z$.
\end{proof}
Now we prove \ref{claim:expected-absorbers}.
\begin{proof}[Proof of \ref{claim:expected-absorbers}]
Let $\randX$ be the total number of isolated absorber-extensions
in $\randG$ and let $\randZ$ be the number of pairs of hyperedge-intersecting
absorber-extensions in $\randG$. We can obtain a collection of disjoint
isolated absorber-extensions by considering the collection of all
isolated absorber-extensions and deleting one from each intersecting
pair, so $\randY\ge\randX-\randZ$ and $\E\randY\ge\E\randX-\E\randZ$.
We first estimate $\E\randX$.

First we show that there are $\Theta\left(\alpha^{3}n^{6}\right)$
possible absorber-extensions not conflicting with $S$. To this end,
we first show that there are $\Theta\left(\left(\alpha n\right)^{3}\right)$
ways to choose three disjoint hyperedges $e_{x}=\left\{ x,x_{1},x_{2}\right\} ,e_{y}=\left\{ y,y_{1},y_{2}\right\} ,e_{z}=\left\{ z,z_{1},z_{2}\right\} \in E\left(S\right)$,
such that $\left\{ x_{1},y_{2}\right\} $, $\left\{ y_{1},z_{2}\right\} $
and $\left\{ z_{1},x_{2}\right\} $ are all edges of $G\left(S\right)$.
Indeed, let $X$ (respectively, $Y$ and $Z$) be the set of all pairs
of vertices $\left\{ x_{1},x_{2}\right\} $ (respectively, $\left\{ y_{1},y_{2}\right\} $
and $\left\{ z_{1},z_{2}\right\} $) such that $\left\{ x,x_{1},x_{2}\right\} \in E\left(S\right)$
(respectively, $\left\{ y,y_{1},y_{2}\right\} \in E\left(S\right)$
and $\left\{ z,z_{1},z_{2}\right\} \in E\left(S\right)$). Note that
$\left|X\right|,\left|Y\right|,\left|Z\right|=\Theta\left(\alpha n\right)$,
arbitrarily choose an ordering for each of the constructed pairs,
and apply \ref{lem:new-quasirandom} to show that there are $\Theta\left(\left(1-\alpha\right)^{3}\left(\alpha n\right)^{3}\right)=\Theta\left(\left(\alpha n\right)^{3}\right)$
ways to choose $e_{x},e_{y},e_{z}$. Then, the number of ways to choose
an absorber-extension compatible with $e_{x},e_{y},e_{z}$ is precisely
the number of copies in $G\left(S\right)$ of a certain graph $F$
rooted on the vertices of $e_{x},e_{y},e_{z}$. (Specifically, $F$
is the graph obtained by taking the external hyperedges of the hypergraph
in \ref{def:absorber} and replacing each hyperedge with a triangle
on its vertex set). Provided $h$ is large enough, by \ref{prop:H-count}
the number of suitable copies of $F$ is $\left(1-\alpha\right)^{O\left(1\right)}n^{3}=\Theta\left(n^{3}\right)$.
(Note that strictly speaking we are over-counting, because it is possible
that an absorber-extension can contribute to multiple different absorbers,
but this constant factor will not bother us).

The probability that each possible absorber-extension appears and
is isolated in $\randG$ is 
\[
\Theta\left(\left(\alpha/n\right)^{4}\left(1-\alpha/n\right)^{O\left(n\right)}\right)=\Theta\left(\alpha^{4}n^{-4}\right),
\]
so $\E\randX=\Theta\left(\alpha^{7}n^{2}\right)$. Now, we estimate
$\E\randZ$. It will be convenient to consider \emph{labelled} absorbers
and absorber-extensions; for the hypergraph in \ref{def:absorber}
denote its hyperedges (in the same order as in \ref{def:absorber})
by 
\[
e_{x},\,e_{y},\,e_{z},\,e_{*},\,e_{1},\,e_{2},\,e_{3}.
\]
There are several possibilities for a hyperedge-intersecting pair
of distinct absorber-extensions.
\begin{itemize}
\item Suppose they intersect in one hyperedge. Each such pair appears with
probability $O\left(\left(\alpha/n\right)^{7}\right)$.
\begin{itemize}
\item Suppose the intersecting hyperedge is $e_{*}$ for one of the absorber-extensions
(say the second). There are $O\left(\left(\alpha n\right)^{6}n^{3}\right)$
possibilities for such a pair of absorber-extensions, as follows.
Choose the first absorber-extension in one of $O\left(\left(\alpha n\right)^{3}n^{3}\right)$
ways, and choose one of its hyperedges which will intersect with the
second absorber-extension. Then, the second absorber-extension is
determined by its choices for $e_{x},e_{y},e_{z}$.
\item Suppose the intersecting hyperedge is say $e_{1}$ for the second
absorber-extension. There are $O\left(\left(\alpha n\right)^{4}n^{5}\right)$
possible such pairs, as follows. After choosing the first absorber-extension,
and choosing its hyperedge which will be intersecting, the choices
for $e_{x}$ and $e_{y}$ for the second absorber-extension are already
determined (if a suitable choice exists at all), because in $S$ each
pair of vertices is included in at most one hyperedge. One of the
vertices of $e_{*}$ is already also determined, so the second absorber-extension
is determined by a choice of $e_{z}$ and two vertices of $e_{*}$.
\end{itemize}
\item Suppose they intersect in two hyperedge. Each such pair appears with
probability $O\left(\left(\alpha/n\right)^{6}\right)$.
\begin{itemize}
\item Suppose the intersecting hyperedges are say $e_{*}$ and $e_{1}$
for the second absorber-extension. There are $O\left(\left(\alpha n\right)^{4}n^{3}\right)$
possibilities for such a pair: after choosing the first absorber-extension
and its hyperedges which will be intersecting, the second absorber-extension
is determined by its choice for $e_{z}$.
\item Suppose the intersecting hyperedges are say $e_{1}$ and $e_{2}$
for the second absorber-extension. There are $O\left(\left(\alpha n\right)^{3}n^{4}\right)$
possibilities for such a pair: after choosing the first absorber-extension
and its hyperedges which will be intersecting, the second absorber-extension
is determined by a single vertex for $e_{*}$.
\end{itemize}
\item Note that choosing three of $e_{1},e_{2},e_{3},e_{*}$ determines
the other, so there are only $O\left(\left(\alpha n\right)^{3}n^{3}\right)$
possibilities for a pair of absorber-extensions intersecting in three
hyperedges. Each such pair appears with probability $O\left(\left(\alpha/n\right)^{5}\right)$.
\end{itemize}
In summary (for small $\alpha$), we have 
\begin{align*}
\E\randZ & =O\left(\left(\left(\alpha n\right)^{6}n^{3}+\left(\alpha n\right)^{4}n^{5}\right)\left(\alpha/n\right)^{7}+\left(\left(\alpha n\right)^{4}n^{3}+\left(\alpha n\right)^{3}n^{4}\right)\left(\alpha/n\right)^{6}+\left(\alpha n\right)^{3}n^{3}\left(\alpha/n\right)^{5}\right)\\
 & =O\left(\alpha^{11}n^{2}\right).
\end{align*}
So, $\E\randY\ge\Theta\left(\alpha^{7}n^{2}\right)-O\left(\alpha^{11}n^{2}\right)=\Theta\left(\alpha^{7}n^{2}\right)$.
\end{proof}
As in \ref{subsec:degree,subsec:density}, \ref{thm:bernstein-type}
proves that $\randY=\Omega\left(n^{2}\right)$ with probability $1-\exp\left(-\Omega\left(n^{2}\right)\right)$.
Note that if there are $\Omega\left(n^{2}\right)$ hyperedge-disjoint
absorber-extensions then there must in fact be $\Omega\left(n\right)$
externally vertex-disjoint absorbers rooted on $x,y,z$. We can find
these greedily; each vertex is involved in only $O\left(n\right)$
hyperedges of $\randS$, so removing $O\left(1\right)$ vertices from
consideration results in at most $O\left(n\right)$ hyperedges being
removed from consideration. By the union bound, \ref{lem:bite-transfer}
and \ref{lem:triangle-removal-transfer} (with $\mathcal{Q}$ as defined
at the beginning of this subsection), it follows that in a random
Steiner triple system, a.a.s. every triple of vertices has $\Omega\left(n\right)$
externally vertex-disjoint absorbers.

If a Steiner triple system has this property, then we can very straightfowardly
greedily build an absorbing structure with flexible set $Z$, as follows.
Choose a resilient template $T$ on the vertex set of $\randS$, such
that the flexible set is $Z$. Consider each hyperedge $\left(x,y,z\right)$
of $T$ in any order, and greedily choose an absorber in $\randS$
rooted on $\left(x,y,z\right)$, each of whose external vertices is
disjoint to the template and all absorbers chosen so far. We have
proved that \ref{cond:absorber} of \ref{lem:PM-via-absorbers} holds
a.a.s. in a random Steiner triple system, completing the proof of
\ref{thm:PM-in-STS}.

\section{An upper bound for the number of perfect matchings\label{sec:max-num-matchings}}

\global\long\def\randl{\boldsymbol{\lambda}}%

\global\long\def\randz{\boldsymbol{z}}%

\global\long\def\randR{\boldsymbol{R}}%

\global\long\def\rande{\boldsymbol{e}}%

\global\long\def\randQ{\boldsymbol{Q}}%

In this section we prove \ref{thm:maximum-PMs}, with the entropy
method. Recall the definition of entropy and its basic properties
from \ref{sec:completion-count}.
\begin{proof}[Proof of \ref{thm:maximum-PMs}]
Let $\mathcal{M}$ be the set of perfect matchings in $S$. Consider
a uniformly random $\randM\in\mathcal{M}$, so that $H\left(\randM\right)=\log\,\left|\mathcal{M}\right|$
is the entropy of $\randM$. Let $\randM_{v}$ be the hyperedge of
$\randM$ containing the vertex $v$, so that the sequence $\left(\randM_{v}\right)_{v\in\range n}$
determines $\randM$. For any ordering on the vertices of $S$,
\begin{equation}
H\left(\randM\right)=\sum_{v\in V}H\left(\randM_{v}\cond\randM_{v'}:v'<v\right).\label{eq:entropy-1}
\end{equation}
Now, a sequence $\lambda\in\left[0,1\right]^{n}$ with all $\lambda_{v}$
distinct induces an ordering on $\range n$, with $v'<v$ when $\lambda_{v'}>\lambda_{v}$.
Let $\randR_{v}\left(\lambda\right)$ be 1 plus the number of hyperedges
$e\ne\randM_{v}$ containing $v$ in $S$ such that $\lambda_{v'}<\lambda_{v}$
for all $v'\in\left(\bigcup_{z\in e}\randM_{z}\right)\backslash\left\{ v\right\} $.
(In particular, $\randR_{v}\left(\lambda\right)=1$ if $\lambda_{v'}>\lambda_{v}$
for some $v'\in\randM_{v}\backslash\left\{ v\right\} $, in which
case $\randM_{v}$ is determined by the information $\left(\randM_{v'}\,\colon\,\lambda_{v'}>\lambda_{v}\right)$).
Note that $\randR_{v}\left(\lambda\right)$ is an upper bound on $\left|\supp\left(\randM_{v}\cond\randM_{v'}\,\colon\,\lambda_{v'}>\lambda_{v}\right)\right|$,
and therefore
\begin{equation}
H\left(\randM_{v}\cond\randM_{v'}:\lambda_{v'}>\lambda_{v}\right)\le\E\left[\log\randR_{v}\left(\lambda\right)\right].\label{eq:entropy-2}
\end{equation}
Let $\randl=\left(\randl_{v}\right)_{v\in\range n}$ be a sequence
of independent random variables, where each $\randl_{v}$ has the
uniform distribution in $\left[0,1\right]$. It follows from \ref{eq:entropy-1}
and \ref{eq:entropy-2} that
\[
H\left(\randM\right)\le\sum_{v\in\range n}\E\left[\log\randR_{v}\left(\randl\right)\right].
\]
Next, for any $M\in\mathcal{M}$ and $\lambda_{v}\in\left[0,1\right]$,
let
\[
R_{v}^{M,\lambda_{v}}=\E\left[\randR_{v}\left(\randl\right)\cond\randM=M,\,\randl_{v}=\lambda_{v},\,\randl_{v'}<\randl_{v}\text{ for all }v'\in\randM_{v}\backslash\left\{ v\right\} \right].
\]
Now, there are $\left(n-1\right)/2$ hyperedges in $S$ containing
$v$, and for each such hyperedge $e=\left\{ x,y,v\right\} $ other
than $M_{v}$, note that $M_{x}\ne M_{y}$ (because $e$ and $M_{x}$
are different hyperedges of a Steiner triple system and can therefore
intersect in at most one vertex). So, $\left|\left(\bigcup_{z\in e}M_{z}\right)\backslash M_{v}\right|=6$
and by linearity of expectation,
\[
R_{v}^{M,\lambda_{v}}=1+\left(\left(n-1\right)/2-1\right)\lambda_{v}^{6}.
\]
By Jensen's inequality,
\[
\E\left[\log\randR_{v}\left(\randl\right)\cond\randM=M,\,\randl_{v}=\lambda_{v},\,\randl_{v'}<\randl_{v}\text{ for all }v'\in\randM_{v}\backslash\left\{ v\right\} \right]\le\log R_{v}^{M,\lambda_{v}},
\]
and
\[
\Pr\left(\randl_{v'}<\randl_{v}\text{ for all }v'\in\randM_{v}\backslash\left\{ v\right\} \cond\randl_{v}=\lambda_{v}\right)=\lambda_{v}^{2},
\]
so
\[
\E\left[\log\randR_{v}\left(\randl\right)\cond\randM=M,\,\randl_{v}=\lambda_{v}\right]\le\lambda_{v}^{2}\log R_{v}^{M,\lambda_{v}}+\left(1-\lambda_{v}^{2}\right)\log1=\lambda_{v}^{2}\log R_{v}^{M,\lambda_{v}}.
\]
It follows that
\begin{align*}
\E\left[\log\randR_{v}\left(\randl\right)\cond\randM=M\right] & \le\E\left[\randl_{v}^{2}\log R_{v}^{M,\randl_{v}}\right]\\
 & =\int_{0}^{1}\lambda_{v}^{2}\log\left(1+\left(\left(n-1\right)/2-1\right)\lambda_{v}^{6}\right)\d\lambda_{v}\\
 & =\frac{1}{3}\left(\log\left(n/2\right)-2\right)+o\left(1\right),
\end{align*}
using \ref{eq:complicated-integral} (from the proof of \ref{thm:number-of-completions-upper}
in \ref{sec:completion-count}). We conclude that
\begin{align*}
\log\,\left|\mathcal{M}\right| & =H\left(\randM\right)\\
 & \le\sum_{v\in\range n}\E\left[\log\randR_{v}\left(\randl\right)\right]\\
 & \le\frac{n}{3}\left(\log\left(n/2\right)-2+o\left(1\right)\right),
\end{align*}
which is equivalent to the theorem statement.
\end{proof}

\section{Latin squares\label{sec:latin}}

In this section we sketch how one should adapt the methods in this
paper to prove \ref{thm:latin}.

A partial Steiner triple system is a collection of edge-disjoint triangles
in $K_{n}$, whereas a partial Latin square is a collection of edge-disjoint
triangles in the complete tripartite graph $K_{n,n,n}$. Let $V=V_{1}\sqcup V_{2}\sqcup V_{3}$
be the tripartition of $K_{n,n,n}$. We say a subgraph $G\subseteq K_{n,n,n}$
with $m$ edges between each pair of parts is $\left(\varepsilon,h\right)$-quasirandom
if for each $i\in\left\{ 1,2,3\right\} $, every set $A\subseteq V\backslash V_{i}$
with $\left|A\right|\le h$ has $\left(1\pm\varepsilon\right)\left(m/n^{2}\right)^{\left|A\right|}n$
common neighbours in $V_{i}$. The following result is a special case
of \cite[Theorem~1.5]{Kee18}.
\begin{thm}
\label{conj:keevash-latin}There are $h\in\NN$, $\varepsilon_{0},a\in\left(0,1\right)$
and $n_{0},\ell\in\NN$ such that the following holds. Suppose $n\ge n_{0}$,
$m/n^{2}\ge n^{-a}$ and $\varepsilon\le\varepsilon_{0}\left(m/n^{2}\right){}^{\ell}$,
and suppose that $G\subseteq K_{n,n,n}$ is an $\left(\varepsilon,h\right)$-quasirandom
graph with $m$ edges between each pair of parts, that arises as the
set of edges of $K_{n,n,n}$ not covered by the triples of some partial
Latin square. Then the edges of $G$ can be decomposed into triangles.
\end{thm}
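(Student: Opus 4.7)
The plan is to deduce the statement from the general existence theorem for designs \cite[Theorem~1.5]{Kee18}, verifying that a tripartite triangle-decomposition of $G$ is a particular instance of Keevash's setup and that all his hypotheses follow from ours. The reduction itself is routine in spirit but somewhat painful to carry out, because one has to match our concrete notion of $(\varepsilon,h)$-quasirandomness against Keevash's abstract typicality condition.

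First I would set up the correspondence. A partial Latin square is an edge-disjoint collection of rainbow triangles in $K_{n,n,n}$, and its completions to Latin squares are in bijection with the triangle-decompositions of the leftover graph $G$. So the goal is to apply Keevash's theorem to $G$ with the "design template" being a rainbow triangle in a tripartite $3$-graph, and recover a decomposition into rainbow triangles.

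Next I would verify that the divisibility (a.k.a.\ $J$-divisibility) side conditions in Keevash's theorem are automatic in our tripartite setting. The only nontrivial local constraints for decomposing a tripartite graph into rainbow triangles are that $G$ has the same number of edges between each pair of parts $V_{i},V_{j}$, and that for each vertex $v \in V_{i}$ the two degrees of $v$ into $V_{j}$ and $V_{k}$ (for $\{i,j,k\}=\{1,2,3\}$) are equal. Both follow immediately from the fact that $G$ is $K_{n,n,n}$ minus the edges of a set of edge-disjoint rainbow triangles: each removed triangle decreases each of the three between-parts edge counts by one, and each of the two degrees from each of its vertices by one.

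The main obstacle, and the part that requires actual work, is showing that the $(\varepsilon,h)$-quasirandomness hypothesis implies Keevash's typicality condition for the parameters in \cite[Theorem~1.5]{Kee18}. Keevash's typicality demands control over the counts of rooted extensions of small subgraphs within $G$, whereas we only assume control over common neighbourhoods of sets of at most $h$ vertices. One chooses $h$ in our statement to be sufficiently larger than Keevash's typicality parameter $\ell$ (and $\varepsilon_0, a$ accordingly), and argues inductively on the size of the extension, as in the proof of \ref{prop:H-count}: at each induction step a new extension vertex has $(1\pm\varepsilon)(m/n^{2})^{\deg}\cdot n$ choices by $(\varepsilon,h)$-quasirandomness, and these error factors multiply to $(1\pm O(\varepsilon/(m/n^{2})^{\ell}))$, which under our assumption $\varepsilon\le\varepsilon_{0}(m/n^{2})^{\ell}$ is the precise quality of typicality Keevash needs. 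Once this translation is in place, Keevash's theorem yields the triangle-decomposition and hence the theorem.
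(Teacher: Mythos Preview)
Your approach is correct and matches the paper's: the paper does not give a proof of this theorem at all, but simply states that it is a special case of \cite[Theorem~1.5]{Kee18}. Your proposal fills in the routine verification (divisibility and typicality) that the paper omits, so there is nothing to compare beyond the level of detail.
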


With our new notion of quasirandomness and \ref{conj:keevash-latin}
playing the role of \ref{thm:keevash} we can then adapt the proofs
in \ref{sec:completion-count,sec:random-triangle-removal} in a straightforward
manner to prove the obvious Latin squares counterpart to \ref{lem:num-extensions}.
This allows us to prove the Latin squares counterpart to \ref{lem:triangle-removal-transfer}.

Now we outline what should be adapted from the arguments in \ref{sec:absorbers}
for the Latin squares case. The definition of an absorber can remain
the same, noting that the hypergraph in \ref{def:absorber} is tripartite
(and the tripartition can be chosen to have $x,y,z$ in different
parts). The definition of a resilient template should be adapted slightly:
we define a resilient template to be a tripartite hypergraph $H$
(with tripartition $V\left(H\right)=V_{1}\left(H\right)\sqcup V_{2}\left(H\right)\sqcup V_{3}\left(H\right)$,
say) with a flexible set $Z$, such that each $Z_{i}=V_{i}\left(H\right)\cap Z$
has the same size, and such that if half the vertices of each $Z_{i}$
are removed, then the remaining hypergraph has a perfect matching.
To prove a counterpart of \ref{lem:resilient-template} we can just
use three vertex-disjoint  copies of the tripartite hypergraph in
the proof of \ref{lem:resilient-template} (one copy for each $Z_{i}$).
The counterpart of \ref{lem:PM-via-absorbers} is as follows (with
virtually the same proof).
\begin{lem}
\label{lem:absorbers-latin}Consider an order-$n$ Latin square $L$
(with tripartition $V=V_{1}\sqcup V_{2}\sqcup V_{3}$) satisfying
the following properties for some $\delta=\delta\left(n\right)=o\left(1/\log n\right)$
and fixed $\beta>0$.
\begin{enumerate}
\item There is an absorbing structure $H$ in $L$ with at most $\delta n$
vertices and a flexible set $Z$ such that each $Z_{i}=V_{i}\cap Z=2\floor{\delta^{2}n}$.
\item For at most $\delta n$ of the vertices $v\in V_{1}$, we have $\left|\left\{ \left(x,y\right)\in Z_{2}\times Z_{3}:\left(v,x,y\right)\in E\left(L\right)\right\} \right|<6\delta^{5}n$,
and the analogous statements hold for degrees of $v\in V_{2}$ and
$v\in V_{3}$ into $Z_{1}\times Z_{3}$ and $Z_{1}\times Z_{2}$ respectively.
\item For any choice of $W_{i}\subseteq V_{i}$ such that each $\left|W_{i}\right|\ge\delta^{5}n$,
there are at least $\left(1-\beta\right)\left|W_{1}\right|\left|W_{2}\right|\left|W_{3}\right|/n$
hyperedges in $W_{1}\times W_{2}\times W_{3}$.
\end{enumerate}
Then $L$ has 
\[
\left(\frac{n}{e^{2}}\left(1-\beta-o\left(1\right)\right)\right)^{n}
\]
transversals.
\end{lem}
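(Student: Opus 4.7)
The plan is to adapt the proof of \ref{lem:PM-via-absorbers} essentially verbatim to the tripartite Latin square setting. As there, we describe a procedure to build a transversal and then count the number of ordered outputs it can produce.

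For each $i \in \{1,2,3\}$, let $U_i \subseteq V_i \setminus V(H)$ be the set of vertices with degree less than $6\delta^{5}n$ into $Z_{i'} \times Z_{i''}$ (where $\{i,i',i''\}=\{1,2,3\}$); by condition~(2) we have $|U_i| \le \delta n$. First, greedily cover $U := U_1 \cup U_2 \cup U_3$ using hyperedges of $L$ disjoint from $V(H)$: each vertex lies in exactly $n$ hyperedges of $L$, and only $O(\delta n)+v(H)=o(n)$ of them conflict with earlier choices or with $V(H)$. Because every hyperedge contains exactly one vertex from each part, after this stage the leftover sets $W_i \subseteq V_i\setminus V(H)$ all have the same size $n' \ge n - O(\delta n)$ (the balance $v_1(H)=v_2(H)=v_3(H)$ is automatic as the absorbing structure is tripartite by construction).

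The \emph{main step}: while $|W_i| > \lfloor\delta^{5}n\rfloor$ for each $i$, greedily select a hyperedge of $L$ in $W_1 \times W_2 \times W_3$; condition~(3) guarantees at least $(1-\beta)|W_1||W_2||W_3|/n$ choices, and each selection preserves $|W_1|=|W_2|=|W_3|$. Afterward, condition~(2) is used to greedily cover the remaining vertices of $V\setminus V(H)$ by hyperedges each containing one vertex of $Z_{i'}$ and one of $Z_{i''}$; at this point exactly $\lfloor\delta^{5}n\rfloor$ vertices from each $Z_i$ have been used. A final round of greedy picks inside $Z$, applying condition~(3) to $Z_1\times Z_2\times Z_3$, shrinks each $Z_i$ to exactly half its original size, and the resilient template property then provides a perfect matching on what remains of $H$, completing the transversal.

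The number of transversals is at least the number of ordered choices in the main step, which is
\[
\prod_{j=0}^{m-1}(1-\beta)\frac{(n'-j)^{3}}{n} \;=\; \left(\frac{(1-\beta)(n')^{3}}{n}\right)^{m} \exp\!\left(3\sum_{j=0}^{m-1}\log\!\left(1-\frac{j}{n'}\right)\right),
\]
where $m = n' - \lfloor\delta^{5}n\rfloor \ge n - O(\delta n)$. A Riemann-sum computation gives $\sum_{j=0}^{m-1}\log(1-j/n') = n'\int_{0}^{1}\log(1-t)\,\mathrm{d}t + o(n) = -n' + o(n)$, so the product equals $\bigl((1-\beta-o(1))\,n^{2}/e^{3}\bigr)^{n}$. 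Dividing by $n! = ((1+o(1))n/e)^{n}$ (Stirling) yields the desired $\bigl((1-\beta-o(1))\,n/e^{2}\bigr)^{n}$ unordered transversals.

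The main obstacle is purely bookkeeping: one must check that the three parts stay balanced throughout the greedy stages, and that a tripartite resilient template with equal flexible sets $Z_1, Z_2, Z_3$ exists. Both are straightforward — the paper already suggests using three vertex-disjoint copies of the construction from \ref{lem:resilient-template}, and each hyperedge pick removes exactly one vertex from each part, so balance is preserved for free.
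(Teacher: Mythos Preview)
Your proposal is correct and follows essentially the same approach as the paper, which explicitly states that \ref{lem:absorbers-latin} has ``virtually the same proof'' as \ref{lem:PM-via-absorbers}. One inconsequential slip: after greedily covering the $3\lfloor\delta^{5}n\rfloor$ leftover vertices of $V\setminus V(H)$, each $Z_i$ has lost $2\lfloor\delta^{5}n\rfloor$ vertices (not $\lfloor\delta^{5}n\rfloor$), since covering a vertex of $W_j$ uses one vertex from each of the two other $Z$-parts; this does not affect the rest of the argument.
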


One can then use \ref{lem:absorbers-latin} to prove \ref{thm:latin}
in basically the same way as the proof of \ref{thm:PM-in-STS} in
\ref{sec:absorbers-in-TRP}. We remark that in \ref{lem:new-quasirandom}
we used an implication between certain different notions of graph
quasirandomness; to straightforwardly adapt this lemma to the Latin
squares case we would need an analogous implication between notions
of ``bipartite quasirandomness''. Such an implication is well-known
to hold (see for example \cite[Exercise 9.10]{AS04}).

\section{Concluding remarks\label{sec:concluding}}

In this paper we introduced a new method for analysing random Steiner
triple systems, and we used it to prove that almost all Steiner triple
systems have many perfect matchings. There are many interesting open
questions that remain.
\begin{itemize}
\item We believe the most interesting problem that seems approachable by
our methods is to prove that almost all Steiner triple systems (and
Latin squares) can be \emph{decomposed} into disjoint perfect matchings
(transversals). Since the first version of this paper, together with
Ferber we~\cite{FK19} proved an approximate version of this fact:
namely, almost all Steiner triple systems have $\left(1-o\left(1\right)\right)n/2$
disjoint perfect matchings. As mentioned in the introduction, a Steiner
triple system that can be decomposed into perfect matchings is called
a Kirkman triple system, and even proving the existence of Kirkman
triple systems was an important breakthrough. For Latin squares, the
property of being decomposable into transversals is equivalent to
the important property of having an \emph{orthogonal mate}, which
has a long history dating back to Euler. More details can be found
in \cite{Wan11}.
\item A \emph{$\left(q,r,\lambda\right)$-design} ($q>r$) of order $n$
is a $q$-uniform hypergraph on the vertex set $\range n$ such that
every $r$-set of vertices is included in exactly $\lambda$ hyperedges.
A $\left(q,r\right)$\emph{-Steiner system} is a $\left(q,r,1\right)$-design
(so, a Steiner triple system is a $\left(3,2,1\right)$-design or
equivalently a $\left(3,2\right)$-Steiner system, and a $d$-regular
graph is a $\left(2,1,d\right)$-design). The methods in \ref{sec:random-STS}
generalise to $\left(q,r,\lambda\right)$-designs with mainly notational
changes. Note that a $3$-uniform perfect matching is actually a $\left(3,1\right)$-Steiner
system, so as a sweeping generalization of \ref{thm:PM-in-STS} we
might ask for which $r'\le r$ and $\lambda'\le\lambda$ do $\left(q,r,\lambda\right)$-designs
typically contain spanning $\left(q,r',\lambda'\right)$-designs of
the same order. We note that in the case of regular graphs a much
stronger phenomenon occurs: there is a sense in which a random $\left(d_{1}+d_{2}\right)$-regular
graph is ``asymptotically the same'' as a random $d_{1}$-regular
graph combined with a random $d_{2}$-regular graph (see \cite[Section~9.5]{JLR00}).
\item Another interesting question about random Steiner triple systems is
whether they contain Steiner triple subsystems on fewer vertices.
McKay and Wanless \cite{MW99} proved that almost all Latin squares
have many small Latin subsquares (see also \cite{KS16}), but it was
conjectured by Quackenbush \cite{Qua80} that most Steiner triple
systems do not have proper subsystems. It seems unlikely that the
methods in this paper will be able to prove or disprove this conjecture
without substantial new ideas. Actually, by consideration of the random
3-uniform hypergraph $\Gnp n{1/n}$ we suspect the expected number
of 7-vertex Steiner triple subsystems (Fano planes) in a random Steiner
triple system is $\Theta\left(1\right)$, and that the distribution
of this number is asymptotically Poisson.
\item We could ask more generally about containment and enumeration of subgraphs.
Which hypergraphs $H$ appear a.a.s. in a random Steiner triple system?
Can we show that for all such $H$ the number of copies of $H$ is
concentrated? The methods in this paper can probably be used to prove
a lower bound for the number of copies of $H$ when every subgraph
of $H$ has at least 2 more vertices than hyperedges, but due to the
``infamous upper tail'' issue (see \cite{JR02}), an upper bound
for the number of copies of $H$ is likely to be more difficult.
\item One of the most fundamental properties of random graphs and hypergraphs
is that they have low \emph{discrepancy}, meaning that every sufficiently
large subset of vertices has about the expected number of (hyper)edges.
In \ref{subsec:density} we effectively proved a very weak one-sided
discrepancy bound, but it is not clear how to use our methods to reach
anywhere near optimal discrepancy. See \cite{LL15} for some theorems
and conjectures about discrepancy of Latin squares, many of which
have natural counterparts for Steiner triple systems.
\end{itemize}

\end{document}